\newtheorem*{thm-plain}{Theorem}
\newtheorem{thm}{Theorem}[section]
\newtheorem{lem}[thm]{Lemma}
\newtheorem{prp}[thm]{Proposition}
\newtheorem{cor}[thm]{Corollary}
\newtheorem{conj}[thm]{Conjecture}
\newtheorem{ques}[thm]{Question}
\numberwithin{equation}{thm}
\theoremstyle{definition}
\newtheorem{dfn}[thm]{Definition}
\newtheorem*{dfn-plain}{Definition}
\theoremstyle{remark}
\newtheorem{clm}[thm]{Claim}
\newtheorem{awlog}[thm]{Additional Assumption}
\newtheorem{rem}[thm]{Remark}
\newtheorem*{rem-plain}{Remark}
\DeclareMathOperator{\img}{im}
\DeclareMathOperator{\Spec}{Spec}
\DeclareMathOperator{\codim}{codim}
\DeclareMathOperator{\tor}{tor}
\DeclareMathOperator{\cotor}{cotor}
\DeclareMathOperator{\res}{res}
\DeclareMathOperator{\coker}{coker}
\def\rd#1.{\lfloor{#1}\rfloor}
\def\rp#1.{\lceil{#1}\rceil}
\newcommand{\lto}{\longrightarrow}
\newcommand{\N}{\mathbb N}
\newcommand{\Z}{\mathbb Z}
\newcommand{\Q}{\ensuremath{\mathbb Q}}
\newcommand{\R}{\mathbb R}
\newcommand{\C}{\mathbb C}
\renewcommand{\P}{\mathbb P}
\renewcommand{\O}{\mathscr O}
\newcommand{\x}{\times}
\renewcommand{\phi}{\varphi}
\renewcommand{\theta}{\vartheta}
\newcommand{\id}{\mathrm{id}}
\newcommand{\inj}{\hookrightarrow}
\newcommand{\surj}{\twoheadrightarrow}
\newcommand{\isom}{\cong}
\newcommand{\mf}{\mathfrak}
\newcommand{\mr}{\mathrm}
\newcommand{\tensor}{\otimes}
\newcommand{\Hn}{\mathit\Gamma}
\DeclareMathOperator{\sHom}{\mathscr H\!om}
\newcommand{\sg}{\mathrm{sg}}
\newcommand{\sm}{\mathrm{sm}}
\newcommand{\an}{\mathit{an}}
\newcommand{\Omt}{\check\Omega}
\DeclareMathOperator{\pd}{pd}
\DeclareMathOperator{\depth}{depth}
\newcommand{\sE}{\mathscr{E}}
\newcommand{\sF}{\mathscr{F}}
\newcommand{\sG}{\mathscr{G}}
\newcommand{\sO}{\mathscr{O}}
\newcommand{\sT}{\mathscr{T}}
\newcommand{\eps}{\varepsilon}
\newcommand{\dif}{\mathrm d}
\renewcommand{\d}{\mathrm d}
\newcommand{\del}{\partial}
\newcommand{\wt}{\widetilde}
\newcommand{\wh}{\widehat}
\newcommand{\GL}{\mathrm{GL}}
\newenvironment{sequation}{%
\setcounter{equation}{\value{thm}}%
\numberwithin{equation}{section}%
\begin{equation}%
}{%
\end{equation}%
\numberwithin{equation}{thm}%
\addtocounter{thm}{1}%
}
\numberwithin{equation}{thm}
\DeclareRobustCommand{\SkipTocEntry}[5]{}
\newcommand{\iref}[3]{\the\value{#1}.\the\value{#2}(\the\value{#3})}
\newcommand\factor[2]{\left. \raise 2pt\hbox{$#1$} \right/\hskip -2pt \raise -2pt\hbox{$#2$}}
\definecolor{forrest}{RGB}{81,133,49}
\definecolor{mydarkblue}{RGB}{10,92,153}
\begin{document}

\title{The generalized Lipman--Zariski problem}
\author{Patrick Graf} %
\address{Lehrstuhl f\"ur Mathematik I, Universit\"at Bay\-reuth,
  95440 Bayreuth, Germany} %
\email{\href{mailto:patrick.graf@uni-bayreuth.de}{patrick.graf@uni-bayreuth.de}}
\date{October 16, 2014}
\thanks{The author was supported in full by the
  DFG-Forschergruppe 790 ``Classification of Algebraic Surfaces and Compact Complex
  Manifolds''.}  %
\keywords{Lipman--Zariski conjecture, K\"ahler differentials, singularities of the MMP, quotient singularities, torus quotients, hypersurface singularities} %
\subjclass[2010]{14B05, 14J17, 14E30, 32S05, 32S25, 13A50, 13N05}

\begin{abstract}
We propose and study a generalized version of the Lipman--Zariski conjecture:
let $(x \in X)$ be an $n$-dimensional singularity such that for some integer $1 \le p \le n - 1$, the sheaf $\Omega_X^{[p]}$ of reflexive differential $p$-forms is free. Does this imply that $(x \in X)$ is smooth?
We give an example showing that the answer is \emph{no} even for $p = 2$ and $X$ a terminal threefold. However, we prove that if $p = n - 1$, then there are only finitely many log canonical counterexamples in each dimension, and all of these are isolated and terminal.
As an application, we show that if $X$ is a projective klt variety of dimension $n$ such that the sheaf of $(n-1)$-forms on its smooth locus is flat, then $X$ is a quotient of an Abelian variety.

On the other hand, if $(x \in X)$ is a hypersurface singularity with singular locus of codimension at least three, we give an affirmative answer to the above question for any $1 \le p \le n - 1$. The proof of this fact relies on a description of the torsion and cotorsion of the sheaves $\Omega_X^p$ of K\"ahler differentials on a hypersurface in terms of a Koszul complex. As a corollary, we obtain that for a normal hypersurface singularity, the torsion in degree $p$ is isomorphic to the cotorsion in degree $p - 1$ via the residue map.
\end{abstract}

\maketitle

\tableofcontents

\section{Introduction}

\subsection{Motivation}

The Lipman-Zariski conjecture \cite{Lip65} asserts the following.

\begin{conj} \label{conj:LZ}
  Let $X$ be a complex variety such that the tangent sheaf $\sT_X := \sHom_{\O_X}(\Omega_X^1, \O_X)$
  is locally free. Then $X$ is smooth.
\end{conj}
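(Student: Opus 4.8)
The plan is to reduce to a normal local germ, extract a global frame of vector fields together with a dual frame of reflexive $1$-forms, pass to a resolution, and then play the two frames against each other. \textbf{Reductions.} It is a standard reduction, going back to Lipman, that local freeness of $\sT_X=\sHom_{\O_X}(\Omega_X^1,\O_X)$ already forces $X$ to be normal, so I may assume $(x\in X)$ is a normal germ with $\sT_X\cong\O_X^{\oplus n}$. Choosing a basis yields vector fields $\theta_1,\dots,\theta_n$ framing $\sT_X$, and dualizing gives reflexive $1$-forms $\omega_1,\dots,\omega_n$ framing $\Omega_X^{[1]}$ with $\langle\omega_i,\theta_j\rangle=\delta_{ij}$ as functions on $X$.

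\textbf{Transport to a resolution.} Let $\pi\colon\wt X\to X$ be a log resolution with SNC exceptional divisor $E$, an isomorphism precisely over the smooth locus $X_{\sm}$. Two transport principles are now available. First, by functoriality of resolution the local flows generated by the $\theta_j$ lift to $\wt X$ and preserve $E$; differentiating, the $\theta_j$ lift to \emph{regular} vector fields $\wt\theta_j\in H^0(\wt X,\sT_{\wt X}(-\log E))$, in particular tangent to $E$. Second --- and this is the crucial input --- an \emph{extension theorem} for reflexive differentials guarantees that the pullbacks $\pi^*\omega_i$ extend across $E$ as honest, pole-free holomorphic $1$-forms on $\wt X$. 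Because the pairing is natural and equals the constant $\delta_{ij}$ over $X_{\sm}$, it remains the identity everywhere: $\langle\pi^*\omega_i,\wt\theta_j\rangle=\delta_{ij}$ on all of $\wt X$.

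\textbf{Killing the exceptional divisor.} Suppose $E\neq\emptyset$ and pick a general smooth point $p\in E$. Then $\wt\theta_1(p),\dots,\wt\theta_n(p)$ are tangent vectors lying in the hyperplane $T_pE\subset T_p\wt X$, hence span a space of dimension at most $n-1$ and satisfy a nontrivial linear relation. This relation is inherited by the columns of the matrix $\big[\langle\pi^*\omega_i(p),\wt\theta_j(p)\rangle\big]_{i,j}$, so the matrix is singular --- contradicting the identity $\langle\pi^*\omega_i(p),\wt\theta_j(p)\rangle=\delta_{ij}$, whose matrix is $I_n$. Therefore $E=\emptyset$, i.e. $\pi$ is an isomorphism and $X$ is smooth.

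\textbf{Main obstacle.} The entire argument hinges on the extension theorem: without it the $\pi^*\omega_i$ may genuinely acquire poles along $E$, and those poles are invisible to the pairing precisely because the $\wt\theta_j$ are tangent to $E$, so the contradiction evaporates. Such an extension statement is available only for sufficiently mild singularities (klt, and log canonical under extra hypotheses), and this --- not any formal manipulation --- is exactly why the conjecture remains open in general. I therefore expect an unconditional proof to stall at this point, and I read the abstract's theorems (finiteness for $p=n-1$, and the hypersurface case for all $p$) as providing, via a Koszul/residue analysis, the control over torsion and cotorsion that substitutes for the missing extension result in the generalized problem.
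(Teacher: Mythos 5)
What you are trying to prove is Conjecture~\ref{conj:LZ}, which is an \emph{open problem}: the paper does not prove it and does not claim to, it only records it and cites the known special cases (\cite{GKKP11, Kal11, Dru13, Jor13, GK13, GK14}). So there is no proof in the paper to compare yours against, and any complete argument would be a major result rather than a routine verification. Your own closing assessment is the correct verdict on your proposal: it is not a proof, and the place where it stalls is exactly the place where the problem is open.

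To be concrete about where the gap sits: your reductions are fine --- normality under local freeness of $\sT_X$ is due to Lipman \cite{Lip65}, and vector fields do lift to a functorial log resolution tangent to the exceptional divisor (this is the infinitesimal form of equivariance of functorial resolutions, and is exactly how \cite{GKKP11} and \cite{GK13} proceed). The pairing-against-a-frame contradiction at a point of $E$ is likewise the standard mechanism in those papers. The single unjustified step is the ``crucial input'': that the reflexive forms $\omega_i$ pull back to \emph{pole-free} holomorphic $1$-forms on $\wt X$. That extension statement is a theorem for klt spaces \cite{GKKP11} and, in the sharpened form needed for $1$-forms, for log canonical spaces \cite{GK13, Dru13}; it is false for arbitrary normal singularities, and the hypothesis ``$\sT_X$ locally free'' gives no a priori control on discrepancies that would let you invoke it. So your argument, filled in with the cited extension theorems, reproves the klt/lc cases of the conjecture (which is essentially how those cases were proved in the literature), but it does not and cannot, as written, establish Conjecture~\ref{conj:LZ} itself. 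Note also that this is why the paper's actual theorems take a different shape: for the generalized problem it substitutes Koszul-complex torsion/cotorsion control (Theorem~\ref{thm:co-tor h}) or index-one covers plus the quotient analysis (Theorem~\ref{thm:gen LZ n-1}) for the missing extension result, rather than attacking the original conjecture head on.
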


Recently, several new special cases of Conjecture~\ref{conj:LZ} have been proved~\cite{GKKP11, Kal11, Dru13, Jor13, GK13, GK14}. One of these (the klt case) was applied in~\cite{GKP13} as well as in~\cite{LT14} to characterize torus quotients among varieties with klt singularities by the vanishing of the first and second Chern class.
It is natural to ask about a similar smoothness criterion for the higher (reflexive) exterior powers of the tangent sheaf, i.e.~for $\sHom_{\O_X}(\Omega_X^p, \O_X)$. In this paper, we therefore propose and study the following problem, which to the best of our knowledge has not appeared in the literature so far.

\begin{ques}[Generalized Lipman--Zariski problem] \label{ques:gen LZ}
Let $X$ be an $n$-dimensional complex variety such that for some integer $1 \le p \le n - 1$, the reflexive hull $\Omega_X^{[p]}$ of the sheaf of K\"ahler $p$-forms is locally free.
Under what assumptions on $X$, and for which values of $p$, does this imply that $X$ is smooth?
\end{ques}

Note that for any variety $X$, the tangent sheaf $\sT_X$ is isomorphic to the dual of $\Omega_X^{[1]}$. So Conjecture~\ref{conj:LZ} is equivalent to asking whether the local freeness of $\Omega_X^{[1]}$ implies that $X$ is smooth. Hence Question~\ref{ques:gen LZ} really is a direct generalization of Conjecture~\ref{conj:LZ}.

As less ambitious intermediate steps, we might also consider the following questions.

\begin{ques}[Weak generalized Lipman--Zariski problem] \label{ques:weak gen LZ}
Let $X$ be an $n$-dimen\-sional complex variety such that for some integer $1 \le p \le n$, the sheaf $\Omt_X^p$ of K\"ahler $p$-forms modulo torsion is locally free.
Under what assumptions on $X$, and for which values of $p$, does this imply that $X$ is smooth?
\end{ques}

\begin{ques}[Very weak generalized Lipman--Zariski problem] \label{ques:very weak gen LZ}
Let $X$ be an $n$-dimensional complex variety such that for some integer $p \ge 1$, the sheaf $\Omega_X^p$ of K\"ahler $p$-forms is locally free.
Under what assumptions on $X$, and for which values of $p$, does this imply that $X$ is smooth?
\end{ques}

\subsection{Main results}

Since even Conjecture~\ref{conj:LZ} is known to fail in positive characteristic~\cite[\S 7]{Lip65}, we work over the field of complex numbers throughout.
Furthermore, we mostly work in the analytic category (i.e.~with complex spaces), since this provides a more natural setting and greater generality. For the definition of the sheaf of K\"ahler differentials in this context, see Section~\ref{sec:Kahler diff}. We also explain there why our results in particular apply to algebraic varieties (Remark~\ref{rem:alg var}).

Our results can be divided in two groups: singularities of the Minimal Model Program and hypersurface singularities. For the definition of the singularities of the MMP, such as klt, terminal, and log canonical, we refer to~\cite[Sec.~2.3]{KM98}.

\subsubsection{Singularities of the MMP}

Somewhat contrary to our expectations, we show that even for low-dimensional terminal singularities, the answer to Question~\ref{ques:gen LZ} is negative for all reasonable values of $p$.

\begin{prp}[Terminal singularities with free sheaves of reflexive differentials] \label{prp:gen LZ MMP}
\hspace{.8em} a) The cone over the second Veronese embedding $\P^2 \subset \P^5$ is a terminal threefold singularity $(x_1 \in X_1)$ such that $\Omega_{X_1}^{[2]}$ is free.

b) The cone over the second Veronese embedding $\P^3 \subset \P^9$ is a four-dimensional isolated terminal Gorenstein singularity $(x_2 \in X_2)$ such that $\Omega_{X_2}^{[2]}$ is free.
\end{prp}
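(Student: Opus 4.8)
The plan is to recognize both cones as cyclic quotient singularities and then compute their reflexive differentials by invariant theory. The affine cone over the second Veronese embedding of $\P^n$ has coordinate ring the second Veronese subalgebra $\bigoplus_{k\ge 0}\big(\C[x_0,\dots,x_n]\big)_{2k}$ of the polynomial ring, and this is exactly the invariant ring $\C[x_0,\dots,x_n]^{\mu_2}$ for the diagonal action $x_i\mapsto -x_i$. I would therefore identify $X_1\cong\A^3/\mu_2$ and $X_2\cong\A^4/\mu_2$, where $\mu_2$ acts by $-\id$, and study the quotient map $\pi\colon\A^{n+1}\to X$ (with $n=2,3$ respectively).

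For the singularity type, observe that $-\id$ fixes only the origin, so for $n+1\ge 2$ the action is free in codimension one; hence $\pi$ is quasi-\'etale and the singular locus of $X$ is the single point $x$, i.e.\ the singularities are isolated. Terminality then follows from the Reid--Tai criterion: the unique nontrivial group element acts with all $n+1$ weights equal to $1$, so its age is $(n+1)/2$, which equals $3/2>1$ for $X_1$ and $2>1$ for $X_2$; both quotients are thus terminal. For the Gorenstein assertion in part b), I would note that the order $2$ divides $n+1=4$, so the volume form $dx_0\wedge dx_1\wedge dx_2\wedge dx_3$ is $\mu_2$-invariant and descends to a nowhere-vanishing generator of $\omega_{X_2}$; therefore $X_2$ is Gorenstein. (For $X_1$ the order does not divide $n+1=3$, so $X_1$ is merely $\Q$-Gorenstein of index two, consistent with part a) claiming no more.)

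To prove freeness of $\Omega^{[2]}$, the key input is the standard description of reflexive differentials on a quotient: since $\pi$ is quasi-\'etale and $X$ is normal, one has $\Omega_X^{[p]}\cong\big(\pi_*\Omega_{\A^{n+1}}^p\big)^{\mu_2}$, both sides being reflexive and agreeing over the smooth locus. The generator $-\id$ sends a monomial form $x^\alpha\,dx_{i_1}\wedge\cdots\wedge dx_{i_p}$ to $(-1)^{|\alpha|+p}$ times itself, so for $p=2$ invariance is equivalent to $|\alpha|$ being even, that is, to $x^\alpha\in\O_X$; moreover the coordinate two-forms $dx_i\wedge dx_j$ are themselves invariant. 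Consequently $\big(\pi_*\Omega_{\A^{n+1}}^2\big)^{\mu_2}=\bigoplus_{i<j}\O_X\,dx_i\wedge dx_j$ is a free $\O_X$-module of rank $\binom{n+1}{2}$, which shows that $\Omega_{X_1}^{[2]}$ and $\Omega_{X_2}^{[2]}$ are free.

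I expect the only genuinely delicate point to be the justification of the isomorphism $\Omega_X^{[p]}\cong(\pi_*\Omega_{\A^{n+1}}^p)^{\mu_2}$ and the reflexivity of the invariant module; the remaining steps are elementary bookkeeping. This isomorphism rests on $\pi$ being \'etale in codimension one together with the normality of $X$, so that both sheaves are reflexive and coincide away from $x$. It is worth emphasizing that the restriction to the even value $p=2$ is essential: for odd $p$ the invariant module consists of the forms $x^\alpha\,dx_I$ with $|\alpha|$ odd, which is generated over $\O_X$ by the odd-degree polynomial forms and need not be free when $n+1\ge 2$; this is precisely why the counterexamples appear at $p=2$.
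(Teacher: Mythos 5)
Your proposal is correct and follows essentially the same route as the paper: both identify the cones as the cyclic quotient singularities $\tfrac12(1,1,1)$ and $\tfrac12(1,1,1,1)$, get terminality from Reid's criterion, and deduce freeness of $\Omega_X^{[2]}$ (and the Gorenstein property in part b)) by computing the $\mu_2$-invariant forms via the Steenbrink-type isomorphism $\Omega_X^{[p]}\cong\big(\pi_*\Omega^p_{\C^{n+1}}\big)^{\mu_2}$, which the paper cites as \cite[Lemma~1.8]{Ste77} and you justify directly by reflexivity plus agreement in codimension one. The only difference is that you inline proofs of the cited facts (Veronese subalgebra as invariant ring, the Reid--Tai age computation, Steenbrink's lemma), which is harmless.
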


Note that for a (non-smooth) terminal singularity $(x \in X)$, the sheaf $\Omega_X^{[1]}$ cannot be free by~\cite[Cor.~1.3]{GK13}. Likewise, for a four-dimensional terminal Gorenstein singularity $(x \in X)$, the sheaf $\Omega_X^{[3]}$ cannot be free by Lemma~\ref{lem:wedge pairing} below.

We do however have the following result for forms of degree just below the dimension. Loosely speaking, it says that the answer to Question~\ref{ques:gen LZ} for $p = n - 1$ is ``yes up to finitely many `obvious' counterexamples'', if you believe in the original Lipman--Zariski conjecture.

\begin{thm}[Generalized Lipman--Zariski problem for $(n - 1)$-forms] \label{thm:gen LZ n-1}
Let $(x \in X)$ be a normal $n$-dimensional singularity. Assume any of the following.
\begin{enumerate}
\item\label{itm:gen LZ n-1.1} The Lipman--Zariski conjecture holds in dimension $n$.
\item\label{itm:gen LZ n-1.2} The pair $(X, \Delta)$ is log canonical for some $\R$-divisor $\Delta$ on $X$.
\item\label{itm:gen LZ n-1.3} The singular locus of $X$ has codimension at least three.
\end{enumerate}
If $\Omega_X^{[n-1]}$ is free, then $(x \in X)$ is the cone over the $r$-th Veronese embedding of $\P^{n-1}$, for some integer $r$ that divides $n - 1$. Conversely, any such singularity has the property that $\Omega_X^{[n-1]}$ is free.

In particular, in each dimension there are only finitely many log canonical singularities $(x \in X)$ such that $\Omega_X^{[n-1]}$ is free, and these are all isolated and terminal.
\end{thm}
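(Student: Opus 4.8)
The plan is to use the wedge-product pairing of Lemma~\ref{lem:wedge pairing} to convert freeness of $\Omega_X^{[n-1]}$ into precise information about $\Omega_X^{[1]}$ and $\omega_X$, and then to reduce to the ordinary Lipman--Zariski conjecture by passing to the index-one cover. Since $X$ is normal, any two reflexive sheaves agreeing on the smooth locus are isomorphic, and on the smooth locus the pairing $\Omega^1 \otimes \Omega^{n-1} \to \omega_X$ is perfect. Reflexivizing over all of $X$ therefore yields a canonical isomorphism $\Omega_X^{[1]} \cong \sHom(\Omega_X^{[n-1]}, \omega_X^{[1]})$. If $\Omega_X^{[n-1]}$ is free of rank $n$, the right-hand side becomes $(\omega_X^{[1]})^{\oplus n}$, so
\[
\Omega_X^{[1]} \;\cong\; (\omega_X^{[1]})^{\oplus n}.
\]
Taking reflexive determinants and using $\det \Omega_X^{[1]} = \omega_X^{[1]}$ gives $\omega_X^{[1]} \cong \omega_X^{[n]}$, i.e.\ $(n-1)K_X \sim 0$. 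Thus $X$ is $\Q$-Gorenstein and $K_X$ is torsion of some order $r$ dividing $n-1$.

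Next I would pass to the index-one (canonical) cover $\pi \colon \tilde X \to X$ associated to $\omega_X^{[1]}$, a cyclic quasi-\'etale cover of degree $r$ on which $\omega_{\tilde X}^{[1]}$ is trivial. Because $\pi$ is \'etale in codimension one, $\Omega_{\tilde X}^{[1]} \cong \pi^{[*]}\Omega_X^{[1]} \cong (\pi^{[*]}\omega_X^{[1]})^{\oplus n} \cong \O_{\tilde X}^{\oplus n}$, so $\Omega_{\tilde X}^{[1]}$ --- equivalently $\sT_{\tilde X}$ --- is free. Each of the three hypotheses is then designed to ensure that the Lipman--Zariski conjecture applies to $\tilde X$: case (1) directly; case (2) because the quasi-\'etale cover of a log canonical pair is again log canonical, a situation in which the conjecture is known; and case (3) because the codimension of $\sing \tilde X$ is again at least three, the case settled in the cited work. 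In every case $\tilde X$ is smooth.

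Then $\tilde X \cong (\C^n, 0)$ and $X \cong \C^n/\mu_r$, where by Cartan's lemma the cyclic action may be linearized, giving $\mu_r \hookrightarrow \GL_n(\C)$ acting freely in codimension one. Writing the weights of a generator as $a_1, \dots, a_n \in \Z/r$, the sheaf $\Omega_X^{[1]}$ decomposes into the reflexive rank-one sheaves attached to the characters $a_j$, while $\omega_X^{[1]}$ is attached to $\sum_j a_j$. The isomorphism $\Omega_X^{[1]} \cong (\omega_X^{[1]})^{\oplus n}$ forces $a_j \equiv \sum_i a_i \pmod r$ for every $j$; hence all weights coincide, and after changing the generator they equal $1$, which in turn forces $r \mid n-1$. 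This is exactly the diagonal scalar action whose quotient is the cone over the $r$-th Veronese embedding of $\P^{n-1}$. The converse is immediate: for this action the $(n-1)$-forms $dz_1 \wedge \cdots \wedge \widehat{dz_j} \wedge \cdots \wedge dz_n$ all have weight $n-1 \equiv 0 \pmod r$, so they are invariant and freely generate $\Omega_X^{[n-1]}$. Finally, the ``in particular'' clause follows since $n-1$ has only finitely many divisors $r$; each resulting quotient is an isolated singularity because the scalar action is free away from the origin, and a Reid--Tai age computation (the age of the $k$-th power is $nk/r > 1$ for $1 \le k \le r-1$, as $r \le n-1 < n$) shows every such singularity is terminal.

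I expect the main obstacle to be the middle step: verifying that in each of the three scenarios the relevant hypothesis genuinely descends to the canonical cover $\tilde X$ in a form to which a known (or assumed) case of the Lipman--Zariski conjecture applies --- in particular controlling the singularities of $\tilde X$ under (2) and the codimension of $\sing \tilde X$ under (3) --- together with confirming that the quasi-\'etale cover induces the claimed reflexive isomorphism $\Omega_{\tilde X}^{[1]} \cong \pi^{[*]}\Omega_X^{[1]}$.
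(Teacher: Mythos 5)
Your proposal is correct, and its global skeleton coincides with the paper's: make $K_X$ \Q-Cartier with index dividing $n-1$ via a determinant argument, pass to the index-one cover, apply the known/assumed cases of Lipman--Zariski to the cover in each of the three hypotheses, reduce to a small faithful cyclic quotient, show all weights are equal, and finish with the Veronese-cone description, isolatedness, and terminality. Where you genuinely diverge is in the two steps that the paper treats as its technical core. First, you apply the pairing of Lemma~\ref{lem:wedge pairing} on $X$ itself, obtaining the strong intermediate statement $\Omega_X^{[1]} \isom \omega_X^{\oplus n}$, whereas the paper only uses the pairing on the cover $\wt X$ (where $\omega_{\wt X}$ is trivial) after pulling back $\Omega_X^{[n-1]}$; both orderings work, but yours carries more information downstream. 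Second, and consequently, you replace the paper's Proposition~\ref{prp:quot free} --- an explicit monoid computation showing that unequal weights force $\Omega_{X,x}^{[n-1]}$ to need at least $n+1$ generators, contradicting freeness via Lemma~\ref{lem:stalk fct free} --- by the eigensheaf decomposition $\Omega_X^{[1]} \isom \bigoplus_j \sL_{-a_j}$, $\omega_X \isom \sL_{-\sum_i a_i}$ (via Proposition~\ref{prp:refl quot}), from which the isomorphism $\Omega_X^{[1]} \isom \omega_X^{\oplus n}$ is said to ``force'' $a_j \equiv \sum_i a_i$ for all $j$. This is a slicker and arguably more conceptual route.

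The caveat is that the word ``forces'' conceals exactly the content the paper proves by hand. To deduce $a_j \equiv \sum_i a_i$ from $\bigoplus_j \sL_{-a_j} \isom \sL_{-\sum_i a_i}^{\oplus n}$ you need two facts: (i) the decomposition into rank-one reflexives can be matched up summand by summand --- either invoke Krull--Schmidt over the (Henselian) analytic local ring, or, more simply, reflexively tensor with $\sL_{\sum_i a_i}$ so that each $\sL_{\sum_i a_i - a_j}$ becomes a direct summand of a free module over a local ring, hence free; and (ii) for a \emph{small} action, an eigensheaf $\sL_\chi$ is free only if $\chi = 0$, i.e.\ the character group injects into the local class group. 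Fact (ii) is true and standard (for small $G \subset \GL(n,\C)$ the class group of the quotient germ is the character group; a direct proof: a generator $g$ of a free $\sL_\chi$ is non-vanishing wherever the orbit is free, the non-free locus has codimension $\ge 2$ by smallness, hence $g(0) \ne 0$, and evaluating semi-invariance at the fixed origin gives $\chi = 0$), but it is nowhere quoted in the paper and is precisely your substitute for Proposition~\ref{prp:quot free}; you must cite or prove it for the argument to be complete. With that supplied, the rest of your write-up --- the pullback isomorphism of reflexive $1$-forms along the quasi-\'etale cover, the three cases (direct assumption; \cite[Prop.~5.20]{KM98} plus \cite{GK13} for log canonical; Flenner's result \cite{Fle88} when $\codim X_\sg \ge 3$), the converse via invariant $(n-1)$-forms, and the Reid--Tai computation $nk/r \ge n/r > 1$ in place of the paper's citation of \cite{Rei87} --- is correct.
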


Building on the recent work of~\cite{GKP13} about \'etale fundamental groups, we obtain the following corollary, which for smooth projective $X$ is a classical result from differential geometry.

\begin{cor}[Criterion for quotient singularities and torus quotients] \label{cor:torus quot}
Let $X$ be a normal $n$-dimensional variety such that the pair $(X, \Delta)$ is klt for some $\R$-divisor $\Delta$ on $X$. If the sheaf $\Omega_{X_\sm}^{n-1}$ is flat, then $X$ has at worst quotient singularities.
If $X$ is additionally projective, then there exists an Abelian variety $A$ and a finite surjective Galois morphism $A \to X$ that is \'etale in codimension one.
\end{cor}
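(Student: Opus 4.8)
The plan is to combine Theorem~\ref{thm:gen LZ n-1} with the theory of maximally quasi-\'etale covers of \cite{GKP13}, so that the flatness hypothesis can be converted into the local freeness required to run the theorem. The point of departure is that a flat sheaf is automatically locally free on $X_\sm$ and is governed by a representation of the fundamental group of the regular locus; all the difficulty sits at the singular points, where the reflexive extension $\Omega_X^{[n-1]}$ need not be locally free. To remove this difficulty I would first pass to the maximally quasi-\'etale cover $\gamma\colon \wt X \to X$ provided by \cite{GKP13}: it is a finite Galois morphism, \'etale in codimension one, with the defining property that $\pi_1(\wt X_\reg) \to \pi_1^{\text{\'et}}(\wt X)$ is an isomorphism. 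Since $X$ is klt, so is $\wt X$, and by the theory of reflexive differentials on klt spaces \cite{GKKP11} we have $\gamma^{[*]}\Omega_X^{[n-1]} \isom \Omega_{\wt X}^{[n-1]}$. As flatness is preserved under pullback, $\Omega_{\wt X_\sm}^{n-1}$ is again flat.

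On the cover the situation trivializes. Because $\wt X$ is maximally quasi-\'etale, the flat bundle $\Omega_{\wt X_\sm}^{n-1}$ extends to a \emph{locally free} sheaf on all of $\wt X$; thus $\Omega_{\wt X}^{[n-1]}$ is free in a neighbourhood of every point, and Theorem~\ref{thm:gen LZ n-1} applies germ-wise (its log canonical hypothesis holds since $\wt X$ is klt). Hence every singularity of $\wt X$ is a cone over an $r$-th Veronese embedding of $\P^{n-1}$, that is, a cyclic quotient singularity $\A^n/\mu_r$. Now the maximally quasi-\'etale property forces matters further: the regular locus of $\A^n/\mu_r$ is $(\A^n \minus \{0\})/\mu_r$, whose fundamental group is $\mu_r$ for $n \ge 2$, and a nontrivial such cover of a punctured neighbourhood is a finite \'etale cover of $\wt X_\reg$ that does \emph{not} extend across the cone point. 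By the isomorphism $\pi_1(\wt X_\reg) \isom \pi_1^{\text{\'et}}(\wt X)$ no such cover can exist, so $r = 1$ everywhere and $\wt X$ is in fact smooth. Consequently $X = \wt X / G$ is the quotient of a smooth variety by the finite Galois group $G$, and therefore has at worst quotient singularities, which settles the first assertion.

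For the projective statement, $\wt X$ is now a smooth projective variety carrying a flat sheaf $\Omega_{\wt X}^{n-1}$. Using $\Omega_{\wt X}^{n-1} \isom \sT_{\wt X} \tensor \omega_{\wt X}$ together with the vanishing of all real Chern classes of a flat bundle, a short computation gives $c_1(\wt X) = 0$ and $c_2(\wt X) = 0$ in $H^*(\wt X, \R)$. By Yau's theorem $\wt X$ then admits a Ricci-flat K\"ahler metric, and the equality case $c_2 = 0$ of the Kobayashi--L\"ubke inequality forces this metric to be flat; by Bieberbach's theorem $\wt X$ is an \'etale quotient $A/F$ of an Abelian variety $A$ by a finite group $F$ acting freely. (This last step is exactly the characterization of torus quotients recorded in \cite{GKP13}, so one may also invoke it directly once $\wt X$ is known to be smooth with $c_1 = c_2 = 0$.) Composing $A \to \wt X \xrightarrow{\gamma} X$ and, if necessary, passing to the Galois closure produces a finite surjective Galois morphism $A \to X$ that is \'etale in codimension one, as required.

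The hard part will not be the final classical step but the passage to $\wt X$. One must invoke the \cite{GKP13} machinery \emph{correctly} to turn the flatness of $\Omega_{X_\sm}^{n-1}$ — an analytic condition on the regular locus, with a priori no control on the monodromy — into genuine local freeness of the reflexive extension on the cover, and then recognize the Veronese cones produced by Theorem~\ref{thm:gen LZ n-1} as precisely the obstructions ruled out by the maximally quasi-\'etale property. Everything else is either formal (reflexive pullback of differentials, preservation of flatness, the quotient-singularity descent along $\gamma$) or classical (the Chern-class computation and the uniformization of compact flat K\"ahler manifolds).
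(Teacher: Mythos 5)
Your first half---passing to the cover of \cite[Thm.~1.13]{GKP13} (Theorem~\ref{thm:ext flat}), extending the flat sheaf, identifying the extension with $\Omega_{\wt X}^{[n-1]}$ by reflexivity, and applying Theorem~\ref{thm:gen LZ n-1} to conclude that every singularity of $\wt X$ is a Veronese cone of type $\frac1r(1,\dots,1)$---is exactly the paper's argument. The gap is the next step, where you claim that maximal quasi-\'etaleness forces $r = 1$, i.e.\ that $\wt X$ is smooth. A finite \'etale cover of a punctured neighbourhood of a cone point is a cover of an \emph{open subset} of $\wt X_\sm$, not a cover of $\wt X_\sm$; it would induce a cover of $\wt X_\sm$ only if the corresponding subgroup of the local fundamental group $\mu_r$ were pulled back from a finite-index subgroup of $\pi_1(\wt X_\sm)$, and there is no reason for the natural map $\mu_r \to \pi_1(\wt X_\sm)$ to be injective---it can perfectly well be trivial. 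Maximal quasi-\'etaleness constrains covers of all of $\wt X_\sm$ and says nothing about local fundamental groups. A concrete counterexample to your implication: $\wt X = \P(1,1,1,2)$, the projective cone over the second Veronese embedding $\P^2 \subset \P^5$. It is terminal, hence klt; its unique singularity is of type $\frac12(1,1,1)$, exactly one of the cones allowed by Theorem~\ref{thm:gen LZ n-1} (here $n = 3$ and $r = 2$ divides $n-1$); and its smooth locus is the total space of a line bundle over $\P^2$, hence simply connected, so $\wt X$ is maximally quasi-\'etale. Yet $\wt X$ is singular: your argument would prove that this space is smooth.

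Since both of your conclusions are routed through this false smoothness claim, both halves of the proof are affected. The quotient-singularity statement is salvageable, because $\wt X$ does have quotient singularities and one can descend this to $X = \wt X / G$; the paper does so by producing a further smooth Galois cover $X_2 \to \wt X$, \'etale in codimension one, and then passing to the Galois closure over $X$, which is smooth by purity of branch locus applied on the smooth $X_2$. The projective statement is where the damage is worse: Yau's theorem and Bieberbach's theorem require a smooth cover, which you do not have. The paper instead works directly on the possibly singular cover: Theorem~\ref{thm:gen LZ n-1} shows its singularities are isolated and terminal (so it is smooth in codimension two), flatness gives $K \equiv 0$, an orbifold Chern class computation gives $\wh c_2(\sT) = 0$ for the $\Q$-Chern classes, and then \cite[Thm.~1.16]{GKP13}---which is formulated for klt spaces with $\Q$-Chern classes and does not require smoothness---produces the Abelian cover; the Galois closure is again handled by purity. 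In short, the machinery you invoke does contain the statement you need, but only in its orbifold form; genuine smoothness of the cover is both unobtainable by your argument and unnecessary.
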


Here $X_\sm \subset X$ denotes the smooth locus of $X$. A locally free sheaf $\sF$ on an algebraic variety $X$ is called flat if its analytification $\sF^\an$ is defined by a linear representation of the fundamental group $\pi_1(X^\an)$.

\subsubsection{Hypersurface singularities}

We see that in order to obtain a completely positive answer to Question~\ref{ques:gen LZ}, we must not measure the mildness of the singularity $(x \in X)$ in terms of discrepancies.
Instead, we will focus on a more classically studied case: the class of hypersurface singularities. Note that the examples in Proposition~\ref{prp:gen LZ MMP} are not even complete intersection singularities. (This follows from~\cite{GR11}, for example.)

For normal hypersurfaces $X$, we obtain a positive answer to Question~\ref{ques:gen LZ} for any value of $p$ as soon as the singular locus of $X$ has codimension at least three.
Concerning Question~\ref{ques:weak gen LZ}, we even obtain a positive answer without any extra hypotheses.

\begin{thm}[Generalized Lipman--Zariski problem for hypersurfaces] \label{thm:gen LZ hypersurf I}
Let $(x \in X)$ be a hypersurface singularity of dimension $n$ such that the singular locus of $X$ has codimension at least three.
If $\Omega_X^{[p]}$ is free for some $1 \le p \le n-1$, then $(x \in X)$ is smooth.
\end{thm}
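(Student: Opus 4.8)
The plan is to translate the freeness of $\Omega_X^{[p]}$ into a statement about a Koszul complex and then reach a contradiction at the generic point of the singular locus by a depth computation. Write $X = \{f = 0\}$ locally inside a smooth space $M$ of dimension $n+1$, and consider the Koszul complex $C^\bullet = (\Omega_M^\bullet|_X, \wedge \d f)$ whose differential is wedging with $\d f$; let $Z^q, B^q, H^q$ denote its cocycle, coboundary and cohomology modules. Using the description of torsion and cotorsion established earlier, I would first record the identifications $\tor(\Omega_X^p) \cong H^p$ and $\cotor(\Omega_X^p) \cong H^{p+1}$, and --- since $\sing X$ has codimension at least three, in particular at least two --- that the reflexive hull is computed by the cocycles, $\Omega_X^{[p]} \cong Z^{p+1} := \ker\!\big(\wedge \d f \colon \Omega_M^{p+1}|_X \to \Omega_M^{p+2}|_X\big)$. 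In this language the theorem becomes: if $Z^{p+1}$ is locally free for some $1 \le p \le n-1$, then $\d f$ is nowhere vanishing, i.e.\ $X$ is smooth.

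Suppose not, and let $\eta$ be the generic point of a component of $\sing X$, with local ring $R := \O_{X,\eta}$, a hypersurface ring of dimension $c = \codim \sing X \ge 3$ whose singular locus is the closed point. The key observation is that the Jacobian ideal $I$ generated by the partials of $f$ is $\mathfrak m_R$-primary, so $\operatorname{grade} I = c$; hence $H^q_\eta = 0$ for $q < c$. This vanishing turns the initial segment of $C^\bullet$ into a minimal free resolution of $Z^q = B^q$ for $q \le c-1$, forcing $\depth_R Z^q = c - q + 1$, which is $< c$ as soon as $q \ge 2$. For the top degrees $c \le q \le n$ I would exploit the self-duality of the Koszul complex --- equivalently, the reflexive wedge pairing $\Omega_X^{[p]} \cong \sHom(\Omega_X^{[n-p]}, \omega_X)$, cf.\ Lemma~\ref{lem:wedge pairing} --- which identifies $Z^q$ with the dual of $\Omega_X^{(n+1)-q}$ and thereby reduces the top range to the understood bottom range, together with a direct analysis of the two boundary modules $Z^c$ and $Z^n = \Omega_X^{[n-1]}$, each of which turns out to have depth $2 < c$.

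The delicate part is the intermediate range, and here I would invoke the generic equisingularity of $X$ along $\sing X$: at the generic point $\eta$, up to an \'etale or formal base change, $X$ is a product of a smooth factor with the transverse isolated hypersurface singularity $Y$ of dimension $c$ and embedding dimension $c+1$. The Koszul complex then factors as $(\Lambda^\bullet V_Y, \wedge \d f) \otimes \Lambda^\bullet V_{\sm}$, so its cohomology is the tensor product of that of $Y$ with an exterior algebra; since $Y$ contributes nonzero cohomology in the two consecutive degrees $c$ and $c+1$, the cohomology of $C^\bullet$ is nonzero in \emph{every} degree between $c$ and $n+1$, and each $Z^q$ with $2 \le q \le n$ acquires a direct summand of the form $Z^i_Y \otimes (\text{free})$ with $2 \le i \le c$, hence $\depth_R Z^q < c$. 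Thus $Z^{p+1} = \Omega_X^{[p]}$ fails to be maximal Cohen--Macaulay at $\eta$, let alone free, for every $1 \le p \le n-1$ --- the desired contradiction, so $X$ is smooth. I expect this middle-range bound to be the main obstacle: codimension three alone only bounds depth from below, and over the singular hypersurface ring $R$ there genuinely exist non-free maximal Cohen--Macaulay modules, so freeness is strictly stronger than a naive depth count; it is precisely the equisingular product structure that lets one push the depth strictly below $c$ uniformly in $p$.
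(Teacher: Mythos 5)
Your reduction to the Koszul complex $C^\bullet=(\Omega_M^\bullet|_X,\wedge\,\d f)$ and your treatment of the bottom range are sound and match the paper's toolkit: the identifications $\tor\Omega_X^q\cong H^q$, $\cotor\Omega_X^q\cong H^{q+1}$ and $\Omega_X^{[p]}\cong Z^{p+1}$ are exactly Propositions~\ref{prp:cotor} and~\ref{prp:tor}, and for $q\le c-1$ the depth-sensitivity of the Koszul complex, minimality (the partials of $f$ lie in the maximal ideal at a singular point), and Auslander--Buchsbaum (Theorem~\ref{thm:AB}) do give $\depth Z^q=c-q+1<c$, hence non-freeness. The duality step is also salvageable since a hypersurface is Gorenstein, although it only covers $q\ge n+3-c$ rather than all $q\ge c$. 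The genuine gap is the intermediate range, and it is exactly where you place your bet: the ``generic equisingularity'' you invoke is \emph{false}. At a generic point of its singular locus a hypersurface need not be, even formally, a product of a smooth germ with the transverse singularity. Standard counterexamples are families whose transverse singularity has moduli: Whitney's four-lines family $xy(x+y)(x-ty)$, the family $x^3+y^3+z^3+t\,xyz$ of cones over elliptic curves with varying $j$-invariant, or (to have $\codim_X\sing X\ge 3$) a family of cones over cubic surfaces with varying moduli. In these examples the tangent cone --- a formal invariant --- varies from point to point along the singular locus, so no product decomposition exists at \emph{any} point of it; only topological triviality holds generically, not analytic or formal triviality. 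Since your whole middle-range depth estimate rests on the direct summand $Z_Y^i\otimes(\text{free})$ coming from that product structure, the proof is incomplete precisely in the range you yourself identify as the main obstacle.

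The gap can be closed without any equisingularity, and this is what the paper does; the key is to \emph{use} the freeness hypothesis to resolve a cokernel, rather than to prove unconditional non-freeness by a depth count. If $Z^{p+1}\cong\Omega_X^{[p]}$ is free, then
\[ 0 \lto \Omega_X^{[p]} \xrightarrow{\;\d f\wedge\;} \Omega_Y^{p+1}|_X \lto \Omega_Y^{p+2}|_X \lto \Omega_X^{p+2} \lto 0 \]
is a free resolution of $\Omega_X^{p+2}\cong\coker\alpha^{p+1}$ of length two; by Theorem~\ref{thm:AB} and Cohen--Macaulayness of the hypersurface, $\Omega_X^{p+2}$ then has depth $\ge 1$ at every prime of codimension $\ge 3$, so $\tor\Omega_X^{p+2}=0$ because torsion is supported on $\sing X$. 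Koszul rigidity, Theorem~\ref{thm:coh Koszul}, in the form of (\ref{thm:co-tor h}.\ref{itm:co-tor h.4}), then kills all lower torsion and cotorsion, so $\Omega_X^p=\Omega_X^{[p]}$ is free as a sheaf of K\"ahler differentials, and Theorem~\ref{thm:very weak gen LZ} gives smoothness --- note that this last step, which your outline never needs to invoke but the paper does, is what converts freeness of \emph{K\"ahler} forms into smoothness. If you prefer your localized contradiction format, the same mechanism repairs the middle and top ranges uniformly: freeness of $Z^q$ forces $\pd\Omega_X^{q+1}\le 2$, hence $H^{q+1}=\tor\Omega_X^{q+1}=0$, contradicting $H^j\ne 0$ for all $c\le j\le n+1$; and this nonvanishing is precisely (\ref{thm:co-tor h}.\ref{itm:co-tor h.2}), obtained from Vetter's Theorem~\ref{thm:Vetter} plus rigidity, with no product structure anywhere.
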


\begin{thm}[Weak generalized Lipman--Zariski problem for hypersurfaces] \label{thm:weak gen LZ hypersurf}
Let $(x \in X)$ be a normal hypersurface singularity of dimension $n$. If $\Omt_X^p$ is free for some $1 \le p \le n$, then $(x \in X)$ is smooth.
\end{thm}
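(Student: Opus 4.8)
The plan is to translate the statement into commutative algebra over the local ring $R=\O_{X,x}$ and to run everything through the Koszul description of the sheaves of differentials established above. Write $R=S/(f)$ with $S=\C\{x_0,\dots,x_n\}$ regular and $f\in\mf m_S$; since the germ $(x\in X)$ fails to be smooth we have $x\in X_{\mathrm{sing}}$, so all partials $f_i=\partial f/\partial x_i$ lie in $\mf m:=\mf m_R$. Put $F^q:=\Omega_S^q\otimes_S R$ (free over $R$) and $d_q=\d f\wedge(-)\colon F^q\to F^{q+1}$, with images, cocycles and cohomology $B^q,Z^q,H^q$. By the structural lemma the torsion‑free quotient is $\Omt_X^p\cong B^{p+1}=\img d_p$, the torsion is $\tau^p=H^p$, and $H^p=0$ for $p<c$ while $H^c\neq 0$, where $c=\codim(X_{\mathrm{sing}},X)=\depth(J,R)$ for the Jacobian ideal $J=(f_0,\dots,f_n)$; normality gives $c\ge 2$. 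I will show that $\Omt_X^p$ is never free, for any $1\le p\le n$.

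First I set up two reductions. If $\Omt_X^p=B^{p+1}$ is free, then $0\to Z^p\to F^p\to B^{p+1}\to 0$ splits, so $Z^p$ is a free direct summand of $F^p$. As $R$ is a hypersurface it is Gorenstein, and the top form trivializes $F^{n+1}\cong R$; thus the complex $(F^\bullet,d)$ is self‑dual, with $\d f\wedge$ self‑adjoint up to sign under $F^q\otimes F^{n+1-q}\to F^{n+1}\cong R$. Dualizing $0\to Z^p\to F^p\xrightarrow{d_p}F^{p+1}$ and using $\Ext^1_R(B^{p+1},R)=0$ identifies $(Z^p)^\vee$ with the torsion‑free quotient $\Omt_X^{n+1-p}$; hence freeness of $\Omt_X^p$ forces freeness of $\Omt_X^{n+1-p}$. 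Together with the residue isomorphism $\tau^p\cong\cotor^{p-1}$, which gives the implication ``$\Omt_X^{p-1}$ free $\Rightarrow H^p=0$'', this produces a symmetry $p\leftrightarrow n+1-p$ on the set of degrees in which $\Omt_X^p$ is free.

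For $1\le p\le c-1$ the complex is exact in degrees $<c$, so the truncation $0\to F^0\to\cdots\to F^p\to B^{p+1}\to 0$ is a free resolution of $\Omt_X^p$; its differentials have all entries among the $f_i\in\mf m$, so it is minimal and $\pd_R\Omt_X^p=p\ge 1$, whence $\Omt_X^p$ is not free (in fact $\depth_R\Omt_X^p=n-p<n$). The symmetry $p\leftrightarrow n+1-p$ then disposes of the complementary range $n+2-c\le p\le n$. The extreme case $p=n$ is the cleanest of all: $d_n\colon F^n\to F^{n+1}\cong R$ has image exactly $J$, so $\Omt_X^n\cong J$, and an ideal is free precisely when it is principal on a nonzerodivisor, i.e.\ when $\height J\le 1$; this contradicts $\height J=c\ge 2$.

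What remains is the symmetric middle range $c\le p\le n+1-c$, nonempty exactly when $2\le c\le (n+1)/2$, and this is where I expect the real difficulty: the duality only permutes this range, and the residue corollary forces $H^{p+1}=0$, which is no contradiction since $H^{p+1}$ need not be nonzero. The plan is to compute $\depth_R\Omt_X^p=\depth_R B^{p+1}$ by local cohomology: from $0\to B^{p+1}\to F^{p+1}\to\Omega_X^{p+1}\to 0$ and $H^i_{\mf m}(F^{p+1})=0$ for $i<n$ one gets $\depth B^{p+1}=\depth\Omega_X^{p+1}+1$, and $\depth\Omega_X^{p+1}$ is extracted from the self‑dual Koszul complex through Gorenstein local duality $H^i_{\mf m}(-)^\vee\cong\Ext^{n-i}_R(-,R)$, the idea being that the first nonvanishing Koszul cohomology $H^c$ yields a low‑degree Ext class capping the depth strictly below $n$. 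The genuinely hard point, and the main obstacle, is that depth detects only the maximal Cohen--Macaulay property, while over a hypersurface an MCM module need not be free; so in the borderline case where $\Omt_X^p$ happens to be MCM I would instead use its matrix factorization coming from $\d f\wedge$ (equivalently, reflexivity together with $\Omt_X^p\cong\Omega_X^{[p]}$ and the residue duality) to show that this factorization is nontrivial, again ruling out freeness. Excluding every $p$ forces $X_{\mathrm{sing}}=\emptyset$, that is, $(x\in X)$ is smooth.
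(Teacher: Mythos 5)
Your argument is complete only in the two outer ranges, and the middle range $c \le p \le n+1-c$ is a genuine gap, not a technicality: there you offer a plan (depth via local cohomology, then matrix factorizations in the MCM case) but prove nothing, and you yourself name the obstruction --- depth can at best show $\Omt_X^p$ is maximal Cohen--Macaulay, which over a hypersurface does not imply free. This range is nonempty whenever $2 \le c \le (n+1)/2$; for instance $f = x_1^2+x_2^2+x_3^2$ in $\C^4$ gives $n=3$, $c=2$, and the case $p=2$ is left unproved. There is also a flaw in the duality reduction you use for the range $n+2-c \le p \le n$: the pairing identifies $Z^p$ with $\Hom_{\O_{X,x}}\bigl(\Omega_{X,x}^{n+1-p}, \O_{X,x}\bigr)$, so $(Z^p)^\vee$ is the \emph{reflexive hull} $\Omega_{X,x}^{[n+1-p]}$, not the torsion-free quotient $\Omt_{X,x}^{n+1-p}$; the two differ by $\cotor \Omega_{X,x}^{n+1-p}$, which at a singular point is nonzero exactly in degrees $c-1 \le q \le n$, so freeness of $\Omega_X^{[n+1-p]}$ is not excluded by your minimal-resolution argument (e.g.\ when $n+1-p = c-1$).

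The single missing ingredient is the rigidity of Koszul cohomology (Theorem~\ref{thm:coh Koszul}): since all partials $\partial f/\partial x_i$ lie in $\mf m$ at a singular point, $H^k(K_x)=0$ forces $H^j(K_x)=0$ for all $j \le k$. Contraposing and combining with your first-nonvanishing statement $H^c(K_x) \ne 0$ (and with $H^{n+1}(K_x)\ne 0$, this being $\O_{X,x}$ modulo the Jacobian ideal), one gets $H^j(K_x)\ne 0$ for \emph{every} $c \le j \le n+1$, i.e.\ $\cotor \Omega_{X,x}^p = H^{p+1}(K_x) \ne 0$ for every $p$ with $c-1 \le p \le n$. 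Freeness of $\Omt_X^p$ kills exactly this cotorsion, so it forces $p \le c-2$, where your own minimal free resolution argument already yields a contradiction --- no duality, local cohomology, or matrix factorizations are needed. This is in substance how the paper argues: freeness of $\Omt_X^p$ gives $\cotor \Omega_X^p = 0$, hence by Theorem~\ref{thm:co-tor h}.\ref{itm:co-tor h.3} (Vetter plus rigidity) also $\tor \Omega_X^p = 0$, so $\Omega_X^p = \Omt_X^p$ is a free module of K\"ahler forms; Theorem~\ref{thm:very weak gen LZ} then excludes a singular point, since there $\Omega_{X,x}^p$ would need $\binom{n+1}{p}$ generators while its rank is only $\binom{n}{p}$.
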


Finally, we can answer Question~\ref{ques:very weak gen LZ} in the affirmative without any assumptions on the singularities. This seemingly innocuous generalization of a well-known smoothness criterion will turn out to be critical in the proofs of Theorems~\ref{thm:gen LZ hypersurf I} and~\ref{thm:weak gen LZ hypersurf}.

\begin{thm}[Very weak generalized Lipman--Zariski problem] \label{thm:very weak gen LZ}
Let $(x \in X)$ be any singularity of embedding dimension $e$. If $\Omega_X^p$ is free for some $1 \le p \le e$, then $(x \in X)$ is smooth.
\end{thm}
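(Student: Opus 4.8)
The plan is to compare the rank of the locally free sheaf $\Omega_X^p$ as read off at the point $x$ with its rank at a nearby smooth point, and to show that these two numbers can agree only when $X$ is already smooth at $x$. The entire argument is a fibre‑dimension count, so there is no deep input beyond the definition of embedding dimension and the behaviour of exterior powers under base change.

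First I would compute the fibre of $\Omega_X^p$ at the closed point. Writing $\mathfrak{m} = \mathfrak{m}_x$ for the maximal ideal and $k(x) = \O_{X,x}/\mathfrak{m} = \C$ for the residue field, the standard identification $\Omega_X^1 \otimes k(x) \cong \mathfrak{m}/\mathfrak{m}^2$ shows that the fibre of $\Omega_X^1$ at $x$ has dimension exactly the embedding dimension $e$. Since the formation of exterior powers commutes with base change,
\[
\Omega_X^p \otimes k(x) \;=\; {\textstyle\bigwedge}^p \Omega_X^1 \otimes k(x) \;\cong\; {\textstyle\bigwedge}^p\bigl(\mathfrak{m}/\mathfrak{m}^2\bigr),
\]
so the fibre of $\Omega_X^p$ at $x$ has dimension $\binom{e}{p}$.

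Next I would exploit the hypothesis that $\Omega_X^p$ is locally free on a neighbourhood $U$ of $x$: its rank is then constant on $U$ and equals the fibre dimension $\binom{e}{p}$ just computed. On the other hand, the smooth locus $X_\sm$ is dense, so $U$ contains a smooth point $y$, and there the fibre of $\Omega_X^p$ has dimension $\binom{d}{p}$, where $d = \dim_x X$ (reducing harmlessly to an equidimensional germ). Constancy of the rank forces
\[
{\textstyle\binom{d}{p}} \;=\; {\textstyle\binom{e}{p}}.
\]

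Finally I would conclude by elementary monotonicity. Because $p \le e$, the right‑hand side is at least $1$, whence $\binom{d}{p} \ge 1$ and so $d \ge p$; in particular the degenerate case $p > d$ (where the left‑hand binomial would vanish) is excluded. Now $p \le d \le e$ and $n \mapsto \binom{n}{p}$ is strictly increasing for $n \ge p \ge 1$, so the displayed equality can hold only if $d = e$. As the equality of dimension and embedding dimension is precisely the criterion for regularity of $\O_{X,x}$, it follows that $(x \in X)$ is smooth. The only steps requiring any care are the base‑change identity for exterior powers and the reduction to an equidimensional germ; everything else is formal, which is exactly why this statement, though indispensable for the later proofs, is genuinely ``innocuous''.
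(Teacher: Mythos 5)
Your argument is correct, and it reaches the conclusion by a genuinely different route than the paper. The paper proves an intermediate general statement (Proposition~\ref{prp:loc free wedge}): if $\bigwedge^r \sF$ is locally free for some $r \le \sup \phi_\sF(X)$, where $\phi_\sF$ is the corank (fibre-dimension) function, then $\sF$ itself is locally free; this rests on the fact that on a reduced connected space a coherent sheaf is locally free if and only if its corank function is constant (Lemma~\ref{lem:stalk fct free}). Applying this to $\sF = \Omega_X^1$ and $r = p$, the paper concludes that $\Omega_X^1$ is locally free and then quotes the classical criterion \cite[Kap.~III, \S 4, Satz~7]{GR71} as a black box to obtain smoothness. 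You never show $\Omega_X^1$ is locally free: instead you equate the rank $\binom{e}{p}$ read off at $x$ with the rank $\binom{d}{p}$ at a nearby smooth point and deduce $d = e$, i.e.\ regularity of $\O_{X,x}$ --- in effect re-proving the classical $p = 1$ criterion rather than citing it. The numerical kernel is identical in both proofs (the fibre of $\Omega_X^p$ at $x$ is $\bigwedge^p(\mf m_x/\mf m_x^2)$, of dimension $\binom{e}{p}$, and $n \mapsto \binom{n}{p}$ is strictly increasing for $n \ge p$), and so is the one real hypothesis: your appeal to density of $X_\sm$ requires $X$ reduced, which is exactly the standing assumption (``reduced and connected'') under which the paper's section operates, so this costs no generality relative to the paper. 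What each approach buys: the paper's route yields a reusable statement about arbitrary coherent sheaves and avoids all bookkeeping with components and smooth points (that is hidden in the cited Satz~7); yours is more elementary and self-contained. One small repair to yours: ``reducing harmlessly to an equidimensional germ'' is not literally available, since replacing $X$ by a union of its components changes $\Omega_X^p$; but no reduction is needed --- any smooth point $y$ lies on a single component of some dimension $d_y \le \dim_x X \le e$, and $\binom{d_y}{p} = \binom{e}{p} \ge 1$ forces $d_y = e$, hence $\dim_x X = e$ and $\O_{X,x}$ is regular.
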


\subsection{Outline of proofs}

For Theorem~\ref{thm:gen LZ n-1}, we note that under the given assumptions, $K_X$ is \Q-Cartier. This means that some positive multiple $rK_X$ is a Cartier divisor. Now a standard construction in birational geometry, the so-called \emph{index one cover}~\cite[Def.~5.19]{KM98}, produces a finite surjective morphism $\wt X \to X$ which is the quotient by an action of $\Z/(r)$ and such that $K_{\wt X}$ is Cartier. We prove that in our situation, $\wt X$ is smooth. Hence $(x \in X) \isom \factor{(0 \in \C^n)}{\Z/(r)}$ is a cyclic quotient singularity. We then show that in order for $\Omega_X^{[n-1]}$ to be free, all eigenvalues of the action of $\Z/(r)$ must be equal. It is however well-known that such singularities also have a description as cones over Veronese embeddings.

The technical core of our investigation of hypersurface singularities is the following result, which might be of interest for its own sake.

\begin{thm}[Torsion and cotorsion of a hypersurface] \label{thm:co-tor h}
Let $X = \{ f = 0 \} \subset Y$ be a normal hypersurface, where $f$ is a non-constant holomorphic function on the connected open set $Y \subset \C^{n+1}$. Consider the following complex $(K, \alpha^p)$ of sheaves on $X$:
\[ K\!: 0 \lto \Omega_Y^0|_X \lto \Omega_Y^1|_X \lto
\cdots \lto \Omega_Y^n|_X \lto \Omega_Y^{n+1}|_X \lto 0, \]
where $\alpha^p\!: \Omega_Y^p|_X \to \Omega_Y^{p+1}|_X$ is given by $\alpha^p(\sigma) = \d f \wedge \sigma$. Then:
\begin{enumerate}
\item\label{itm:co-tor h.1} We have isomorphisms $H^p(K) \isom \tor \Omega_X^p$ and $H^p(K) \isom \cotor \Omega_X^{p-1}$, for any $0 \le p \le n + 1$.
\item\label{itm:co-tor h.2} Let $x \in X$ be a singular point, and let $d$ denote the local codimension of the singular locus of $X$ at $x$. Then for the stalks we have
\[ \tor \Omega_{X,x}^p \ne 0 \quad \text{if and only if} \quad d \le p \le n + 1, \]
while
\[ \cotor \Omega_{X,x}^p \ne 0 \quad \text{if and only if} \quad d - 1 \le p \le n. \]
\end{enumerate}
In particular, the following holds:
\begin{enumerate}
\setcounter{enumi}{2}
\item\label{itm:co-tor h.3} If $\cotor \Omega_X^p = 0$ for some $p \le n$, then also $\tor \Omega_X^p = 0$.
\item\label{itm:co-tor h.4} If $\tor \Omega_X^p = 0$ for some $p \le n + 1$, then $\tor \Omega_X^q = 0$ and $\cotor \Omega_X^q = 0$ for all $q \le p - 1$.
\end{enumerate}
\end{thm}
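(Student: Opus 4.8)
The plan is to funnel all four assertions through the single module $H^p(K)=Z^p/B^p$, where I write $Z^p=\ker\alpha^p$ and $B^p=\img\alpha^{p-1}$ (so $B^p\subseteq Z^p$ since $\d f\wedge\d f=0$), and then to compute its stalks with the Koszul complex of the gradient of $f$. The geometric input is the conormal sequence: as $X=\{f=0\}$ is a hypersurface in the smooth space $Y$, the ideal $(f)$ gives $\sI/\sI^2\cong\O_X$ and the natural surjection $\Omega_Y^p|_X\surj\Omega_X^p$ has kernel $\d f\wedge\Omega_Y^{p-1}|_X=B^p$. Thus $\Omega_X^p=\Omega_Y^p|_X/B^p$, and $H^p(K)=Z^p/B^p$ sits naturally as a subsheaf of $\Omega_X^p$. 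A recurring basic fact I would record first: $K$ is exact on the smooth locus $X_\sm$, because wedging with a nowhere‑vanishing $1$‑form is contractible via the homotopy $\iota_v(\d f\wedge{-})+\d f\wedge\iota_v({-})=\id$ for a $v$ with $\langle\d f,v\rangle=1$; hence $H^\bullet(K)$ is supported on the singular locus $\Sigma=\{f=\d f=0\}$.

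For assertion~\ref{itm:co-tor h.1} I would establish the two isomorphisms separately. The identity $\tor\Omega_X^p=Z^p/B^p$: the inclusion $\supseteq$ holds because $H^p(K)$ is supported on the proper subset $\Sigma$ and is therefore torsion; conversely, if $g\sigma\in B^p$ with $0\ne g\in\O_X$, then $g\,(\d f\wedge\sigma)=\d f\wedge(\d f\wedge\tau)=0$, and torsion‑freeness of the free sheaf $\Omega_Y^{p+1}|_X$ forces $\d f\wedge\sigma=0$, i.e.\ $\sigma\in Z^p$. For the cotorsion I note $\Omt_X^{p-1}=\Omega_X^{p-1}/\tor=\Omega_Y^{p-1}|_X/Z^{p-1}\xrightarrow{\ \sim\ }B^p$ via $\alpha^{p-1}=\d f\wedge{-}$, which is the residue map. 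Now $Z^p$, being the kernel of a morphism of locally free sheaves over the normal space $X$, is reflexive; since $X$ is normal, $\Sigma$ has codimension $\ge2$, and $Z^p=B^p$ off $\Sigma$, so for $j\colon X_\sm\hookrightarrow X$ we get $Z^p=j_*(B^p|_{X_\sm})=(B^p)^{**}$, the reflexive hull of $B^p\cong\Omt_X^{p-1}$. Hence $\Omega_X^{[p-1]}\cong Z^p$ and $\cotor\Omega_X^{p-1}=\Omega_X^{[p-1]}/\Omt_X^{p-1}\cong Z^p/B^p=H^p(K)$.

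For assertion~\ref{itm:co-tor h.2} everything reduces to deciding when $H^p(K)_x\ne0$. Base change identifies the stalk of $K$ at $x$ with the cohomological Koszul complex of the sequence $\underline a=(\partial_0 f,\dots,\partial_n f)|_X$ over $R=\O_{X,x}$, so by Koszul self‑duality $H^p(K)_x\cong H_{(n+1)-p}(\underline a;R)$. Here $R$ is Cohen--Macaulay (a hypersurface), the Jacobian ideal $J=(\underline a)$ cuts out $\Sigma$ near $x$, and $\depth(J,R)=\height(J)=d$; since $x$ is singular, $J\subseteq\mf m$ and $R\ne0$. The depth‑sensitivity of the Koszul complex then gives $H_k(\underline a;R)=0$ for $k>(n+1)-d$ and $H_{(n+1)-d}(\underline a;R)\ne0$. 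The crux is that there are \emph{no gaps} below the top, i.e.\ $H_k(\underline a;R)\ne0$ for every $0\le k\le(n+1)-d$. I would deduce this from the rigidity of Koszul homology over a local ring: $H_i(a_1,\dots,a_s;M)=0$ implies $H_{i+1}(a_1,\dots,a_s;M)=0$. This I prove by induction on $s$, using the long exact sequence relating the Koszul homologies of $(a_1,\dots,a_s)$ and $(a_1,\dots,a_{s-1})$ together with Nakayama (a surjective self‑map $a_s$ of a finite module over a local ring is zero on that module). Because $H_{(n+1)-d}(\underline a;R)\ne0$, rigidity propagates non‑vanishing downward, yielding $H^p(K)_x\ne0\iff d\le p\le n+1$; combined with assertion~\ref{itm:co-tor h.1} (which at the stalk gives $\tor\Omega_{X,x}^p=H^p(K)_x$ and $\cotor\Omega_{X,x}^p=H^{p+1}(K)_x$) this is precisely the stated range, where $\tor$ lives in $[d,n+1]$ and $\cotor$ in $[d-1,n]$.

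Finally, assertions~\ref{itm:co-tor h.3} and~\ref{itm:co-tor h.4} are immediate bookkeeping once I know the non‑vanishing locus of $H^\bullet(K)_x$ is the single interval $[d,n+1]$. Indeed, if $\cotor\Omega_{X,x}^p=H^{p+1}(K)_x=0$ with $p\le n$, then $p+1\notin[d,n+1]$ forces $p+1<d$, hence $p<d$ and $\tor\Omega_{X,x}^p=H^p(K)_x=0$; and if $\tor\Omega_{X,x}^p=H^p(K)_x=0$ with $p\le n+1$, then $p<d$, so for every $q\le p-1$ one has $q<d$ and $q+1<d$, giving $\tor\Omega_{X,x}^q=H^q(K)_x=0$ and $\cotor\Omega_{X,x}^q=H^{q+1}(K)_x=0$. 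Passing from stalks to sheaves is harmless since both functors vanish off $\Sigma$. The main obstacle in the whole argument is the middle non‑vanishing in assertion~\ref{itm:co-tor h.2}; identifying Koszul rigidity as the right input is what makes the interval, and hence the clean implications~\ref{itm:co-tor h.3}--\ref{itm:co-tor h.4}, fall out.
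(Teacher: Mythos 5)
Your proposal is correct, and it reaches the theorem by a genuinely different route at both of the paper's main steps. For assertion~(1), the paper constructs an explicit residue isomorphism $\rho_{p+1}\colon \ker\alpha^{p+1}\to\Omega_X^{[p]}$, $\sigma\mapsto\res_X(f^{-1}\sigma)$ (prepared by a logarithmic-pole lemma and a factorization lemma), verifies it on $X_\sm$, and then obtains both identifications from the Five lemma. You instead prove $\tor\Omega_X^p=Z^p/B^p$ by the elementary nonzerodivisor argument inside the free sheaf $\Omega_Y^{p+1}|_X$, and get the cotorsion identification abstractly: $Z^p$ is reflexive (a saturated subsheaf of a free sheaf), it agrees with $B^p\isom\Omt_X^{p-1}$ off the singular locus $\Sigma$, and $\codim\Sigma\ge 2$ by normality, so $Z^p\isom(B^p)^{**}\isom\Omega_X^{[p-1]}$. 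This is shorter and avoids the residue machinery; the only loss is the explicit formula $\sigma\mapsto\res_X(f^{-1}\wt\sigma)$ for the isomorphism $\tor\Omega_X^p\isom\cotor\Omega_X^{p-1}$, which the paper records in a remark following the theorem (and which is the reason it builds $\rho_{p+1}$ concretely). For assertion~(2), the paper imports Vetter's theorem for the vanishing in degrees $\le d-1$ and the non-vanishing in degree $d$, using the Koszul rigidity theorem quoted from Eisenbud only to propagate non-vanishing upward; you eliminate Vetter entirely via depth sensitivity of the Koszul complex: $\O_{X,x}$ is Cohen--Macaulay, so $\depth(J,\O_{X,x})=\height J=d$, hence the first non-vanishing cohomology of $K_x$ sits exactly in degree $d$, and rigidity --- which you re-prove by the standard long-exact-sequence-plus-Nakayama induction, valid here precisely because all $\partial f/\partial x_i$ lie in $\mf m_x$ at a singular point --- fills the interval $[d,n+1]$. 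This makes the proof self-contained and in fact reproves Vetter's statement in the hypersurface case, realizing the paper's own remark that its conclusions could be obtained without citing Vetter. Two cosmetic repairs: in the rigidity induction, Nakayama yields that the relevant Koszul homology module itself vanishes (your phrase ``is zero on that module'' should say this), and the step $Z^p=(B^p)^{**}$ should explicitly invoke the standard fact that on a normal space a reflexive sheaf is determined by its restriction to the complement of a codimension-two subset.
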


\begin{rem}
We can visualize the information contained in~(\ref{thm:co-tor h}.\ref{itm:co-tor h.2}) in the following table.
\begin{table}[h]
  \centering
    \begin{tabular}{l|cccccccc}
      $p$ & $0$ & \dots & $d-2$ & $d-1$ & $d$ & \dots & $n$ & $n+1$ \\ \hline
      $\tor \Omega_{X,x}^p$   & $0$ & \dots & $0$ & $0$ & $\ne 0$ & \dots & $\ne 0$ & $\ne 0$ \\
      $\cotor \Omega_{X,x}^p$ & $0$ & \dots & $0$ & $\ne 0$ & $\ne 0$ & \dots & $\ne 0$ & $0$ \\
    \end{tabular}
\end{table}
\end{rem}

\begin{rem}
Theorem~\ref{thm:co-tor h}.\ref{itm:co-tor h.2} is a strengthening of a result of Vetter about complete intersections~\cite[Satz 4]{Vet70} in the hypersurface case, and we use Vetter's result in the proof of (\ref{thm:co-tor h}.\ref{itm:co-tor h.2}). However, for our applications we only need~(\ref{thm:co-tor h}.\ref{itm:co-tor h.3}) and~(\ref{thm:co-tor h}.\ref{itm:co-tor h.4}), which could be proved without referring to~\cite{Vet70}.
\end{rem}

\begin{rem}
It follows immediately from (\ref{thm:co-tor h}.\ref{itm:co-tor h.1}) and its proof that for any hypersurface $X = \{ f = 0 \} \subset Y \subset \C^{n+1}$, we have an isomorphism
\[ \tor \Omega_X^p \isom \cotor \Omega_X^{p-1}, \]
given explicitly by
\[ \sigma \mapsto \res_X (f^{-1} \wt\sigma), \]
where $\sigma$ is a local section of $\tor \Omega_X^p$ and $\wt\sigma$ is any lift of $\sigma$ to $\Omega_Y^p|_X$.
For isolated hypersurface singularities, this isomorphism was already observed and stated in a less explicit form by Greuel~\cite[Lemma~2.4, Bemerkung]{Gre80}.
Note, however, that this isomorphism is by no means canonical, since it is not even invariant under multiplication of $f$ by a unit.
\end{rem}

We now briefly explain the proof of Theorem~\ref{thm:gen LZ hypersurf I}. In that setting, Theorem~\ref{thm:co-tor h}.\ref{itm:co-tor h.1} enables us to find a quite short free resolution of $\Omega_X^{p+2}$, thereby bounding the projective dimension of the latter sheaf. Using the well-known relationship between projective dimension, depth, and local cohomology, we are able to conclude that $\Omega_X^{p+2}$ is torsion-free. From (\ref{thm:co-tor h}.\ref{itm:co-tor h.4}), we find that $\Omega_X^p = \Omega_X^{[p]}$. The latter sheaf is free by assumption, and the freeness of a sheaf of K\"ahler differentials is sufficient to conclude that $X$ is smooth by Theorem~\ref{thm:very weak gen LZ}.

Theorem~\ref{thm:weak gen LZ hypersurf} is an immediate consequence of~(\ref{thm:co-tor h}.\ref{itm:co-tor h.3}).

\subsection{Open problems}

It is tempting to ask whether some variant of Theorem~\ref{thm:gen LZ n-1} also holds for intermediate values of $p$, that is, $2 \le p \le n - 2$.
Such a result might allow us to generalize Corollary~\ref{cor:torus quot} to the case where $\Omega_{X_\sm}^{n-1}$ is replaced by $\Omega_{X_\sm}^p$. See Remark~\ref{rem:887} below for the projective case.

Another natural question that poses itself is whether Theorem~\ref{thm:gen LZ hypersurf I} is still true if one drops the assumption that the singular locus has codimension at least three, or maybe even without the normality hypothesis.

Finally, one might ask whether Theorems~\ref{thm:gen LZ hypersurf I} and~\ref{thm:weak gen LZ hypersurf} still hold for complete intersection singularities. We conjecture that this is indeed the case.

\subsection*{Acknowledgements}

I would like to thank Clemens J\"order, S\'andor Kov\'acs and Thomas Peternell for sharing their insight with me.
In particular, S\'andor Kov\'acs on MathOverflow suggested a proof of Theorem~\ref{thm:very weak gen LZ} to me.
Furthermore, I would like to thank the anonymous referee for useful suggestions improving the readability of this paper.

\section{Preparations}

In the present section, we fix our notation and for the reader's convenience, we recall some known results from commutative and homological algebra which we will need.

\subsection{Global conventions}

We work over the field of complex numbers $\C$. A singularity $(x \in X)$ is a germ of a complex space. In singularity theory, the notation $(X, x)$ is standard, but in birational geometry the notation employed here is widely used (cf.~e.g.~\cite{Rei97, KM98}).
For complex spaces, we follow~\cite{GR84}. For the definition of the singularities of the MMP, such as klt, terminal and log canonical, we refer the reader to~\cite[Sec.~2.3]{KM98}.
All rings are assumed to be commutative and contain an identity element. A semigroup is a set with an associative binary operation. A monoid is a semigroup containing a neutral element.

\subsection{Reflexive sheaves} \label{sec:refl}

Let $\sF$ be a coherent sheaf on a normal variety $X$. We denote the torsion subsheaf of $\sF$ by $\tor \sF$. The sheaf $\sF$ is said to be torsion-free if $\tor \sF = 0$. We define $\check\sF := \sF / \tor \sF$.

The dual sheaf of $\sF$ is denoted $\sF^* := \sHom_{\O_X}(\sF, \O_X)$.
The double dual, or reflexive hull, of $\sF$ is denoted $\sF^{**} := (\sF^*)^*$. We always have a canonical map $\lambda\!: \sF \to \sF^{**}$.
The sheaf $\sF$ is said to be reflexive if $\lambda$ is an isomorphism.
In general, $\lambda$ gives rise to a four-term exact sequence
\[ 0 \lto \tor \sF \lto \sF \lto \sF^{**} \lto \cotor \sF \lto 0, \]
where $\cotor \sF$, the cotorsion sheaf of $\sF$, is by definition the cokernel of $\lambda$.
We always have $\cotor \sF = \cotor \check\sF$.

Since taking stalks commutes with $\sHom$ for coherent sheaves~\cite[Annex, \S 4]{GR84}, for any point $x \in X$ we obtain an exact sequence
\[ 0 \lto \tor(\sF_x) \lto \sF_x \lto (\sF_x)^{**} \lto \cotor(\sF_x) \lto 0. \]
In particular, taking the reflexive hull, torsion and cotorsion all commute with taking stalks.

For any positive integer $m$, we define the $m$-th reflexive tensor power $\sF^{[m]} := (\sF^{\tensor m})^{**}$ and the $m$-th exterior power $\bigwedge^{[m]} \sF := (\bigwedge^m \sF)^{**}$. The determinant of $\sF$ is $\det \sF := \bigwedge^{[r]} \sF$, where $r$ is the rank of $\sF$.

For more technical results on reflexive sheaves which we will use implicitly, see~\cite[Sec.~3]{Gra12}.

\subsection{K\"ahler and reflexive differential forms} \label{sec:Kahler diff}

It is well known that the algebraic definition of K\"ahler differentials~\cite[Ch.~II, Sec.~8]{Har77} is badly behaved in the power series case. For example, $\Omega_{\C\{t\}/\C}^1$ is not countably generated as a $\C\{t\}$-module, where $\C\{t\}$ denotes the ring of convergent power series in one variable.

Instead, one needs to work with the module of \emph{universally finite} K\"ahler differentials. If $A$ is a ring and $B$ is an $A$-algebra, the universally finite K\"ahler differential module of $B$ over $A$ is a finitely generated $B$-module $\wh\Omega_{B/A}^1$ together with an $A$-derivation $d\!: B \to \wh\Omega_{B/A}^1$ satisfying the following universal property: for any finitely generated $B$-module $M$, and for any $A$-derivation $\delta\!: B \to M$, there exists a unique $B$-module homomorphism $h\!: \wh\Omega_{B/A}^1 \to M$ such that $\delta = h \circ d$.

If a complex space $X$ is given as the zero set of finitely many holomorphic functions $f_1, \dots, f_r$ on a smooth complex space $Y$, then we define the sheaf of K\"ahler $1$-forms on $X$ by
\[ \Omega_X^1 := \factor{\Omega_Y^1|_X}{\langle \dif f_1, \dots, \dif f_r \rangle}. \]
Here $\Omega_Y^1$ is the cotangent sheaf of $Y$ as a complex manifold. By~\cite[Satz~1.2]{GK64}, this definition is independent of the choice of embedding $X \subset Y$. We can thus define the sheaf $\Omega_X^1$ for any complex space $X$. By~\cite[Kap.~III, \S 4, S\"atze 4 und 5]{GR71}, we have $\Omega_{X,x}^1 = \wh\Omega_{\O_{X,x}/\C}^1$ for any point $x \in X$.

Now assume that $X$ is normal. The sheaf $\Omega_X^1$ and its exterior powers $\Omega_X^p := \bigwedge^p \Omega_X^1$ are not reflexive in general. We denote the reflexive hull of $\Omega_X^p$ by $\Omega_X^{[p]}$. We have
\[ \Omega_X^{[p]} = i_* \Omega_{X_\sm}^p, \]
where $i\!: X_\sm \inj X$ is the inclusion of the smooth locus of $X$.
As explained in Section~\ref{sec:refl}, there are maps (not a complex, of course)
\[ \tor \Omega_X^p \inj \Omega_X^p \surj \Omt_X^p \inj \Omega_X^{[p]}. \]
So we have four different notions of differential form on a normal complex space $X$. They can be described as follows:
A torsion form is a K\"ahler form which vanishes when restricted to the smooth locus of $X$. To give a reflexive differential form, i.e.~a section of $\Omega_X^{[p]}$, is the same as giving a differential form on the smooth locus of $X$. A reflexive differential is a K\"ahler form modulo torsion if and only if it extends to the ambient space for some (equivalently, any) embedding of $X$ in a smooth space.

\begin{rem}[Algebraic varieties] \label{rem:alg var}
Let $X$ be a complex algebraic variety. As explained in~\cite[App.~B]{Har77}, we can associate to $X$ its analytification $X^\an$, which is the complex space locally cut out by the same equations as $X$. We can also analytify coherent algebraic sheaves on $X$. It is well known that $X$ is smooth if and only if $X^\an$ is smooth and that a coherent sheaf $\sF$ on $X$ is locally free if and only if $\sF^\an$ is locally free on $X^\an$.
Furthermore, it is clear (at least locally) from the discussion above that $\Omega_{X^\an}^1 = (\Omega_X^1)^\an$. More generally, $\Omega_{X^\an}^p = (\Omega_X^p)^\an$ and if $X$ is normal, then $\Omega_{X^\an}^{[p]} = (\Omega_X^{[p]})^\an$.
Hence, any positive answer to Question~\ref{ques:gen LZ} formulated for complex spaces immediately yields the corresponding statement about algebraic varieties (by passing to the analytification).
\end{rem}

\subsection{The residue map} \label{sec:res map}

Let $Y$ be a smooth complex space and $X \subset Y$ a reduced normal hypersurface. Let $U \subset Y$ be the open set whose complement is the singular locus of $X$. Similarly as before, for any integer $p$ we define
\[ \Omega_Y^{[p]}(\log X) := i_* \Omega_U^p(\log X|_U). \]
The sheaf $\Omega_Y^{[p]}(\log X)$ is reflexive. On the open set $U$, we have the residue map
\[ \Omega_U^p(\log X|_U) \to \Omega_{X_\sm}^{p-1}. \]
Pushing this forward to $Y$, we obtain a map
\[ \res_X\!: \Omega_Y^{[p]}(\log X) \to \Omega_X^{[p-1]}. \]
See~\cite[Sec.~6]{Gra12} for more details.

\subsection{Quotient singularities} \label{sec:quot sg}

Here we follow~\cite[Ch.~1]{Ste77} and~\cite[Ch.~II, \S 4]{Rei87}.
A linear automorphism $g \in \GL(n, \C)$ is called a \emph{quasi-reflection} if it has $1$ as an eigenvalue of multiplicity exactly $n - 1$. A finite subgroup $G \subset \GL(n, \C)$ is called \emph{small} if it does not contain any quasi-reflections. For any finite subgroup $G \subset \GL(n, \C)$, we denote by $G_{\mr{big}}$ the normal subgroup of $G$ generated by all quasi-reflections. The quotient $\C^n / G_{\mr{big}}$ is an affine space, and $G / G_{\mr{big}}$ acts on this space without quasi-reflections.
Hence, when dealing with quotient singularities up to isomorphism, we may assume that the group action does not contain any quasi-reflections.

The following result of Steenbrink describes the sheaves of reflexive differentials on a quotient singularity.

\begin{prp}[Reflexive differentials on a quotient] \label{prp:refl quot}
Let $G \subset \GL(n, \C)$ be a small subgroup, and let $\pi\!: \C^n \to X = \C^n / G$ be the quotient map. Then, for any $p \ge 0$, we have an isomorphism of sheaves
\[ \Omega_X^{[p]} \isom (\pi_* \Omega_{\C^n}^p)^G. \]
\end{prp}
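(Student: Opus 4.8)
The plan is to prove Proposition~\ref{prp:refl quot} by reducing the reflexive statement on the singular quotient to an ordinary statement on the smooth locus, where the quotient map is étale, and then showing that the étale-local computation extends across the exceptional locus. Let $U := X_\sm$ be the smooth locus of $X = \C^n/G$, and let $V := \pi^{-1}(U) \subset \C^n$. Because $G$ is small, the ramification locus of $\pi$ has codimension at least two in $\C^n$: a quasi-reflection would be needed to produce a divisorial fixed locus, and there are none. Hence $\pi|_V \colon V \to U$ is a finite étale $G$-cover, and the complement $X \minus U$ as well as $\C^n \minus V$ both have codimension $\ge 2$ in their respective spaces. This is the geometric fact that makes the reflexive hull interact well with the quotient.

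The first step is to establish the isomorphism on the étale locus. Over $U$, since $\pi|_V$ is étale and $G$-equivariant, pullback of $p$-forms is an isomorphism $\Omega_U^p \isom (\pi|_V)_* \Omega_V^p)^G$; concretely, a $G$-invariant $p$-form on $V$ descends uniquely to a $p$-form on $U$, and conversely every form on $U$ pulls back to a $G$-invariant form on $V$. This is the classical étale-descent statement for differentials and requires no reflexivity. I would write $j\colon U \inj X$ and $i\colon V \inj \C^n$ for the open inclusions and record that $\pi \circ i = j \circ (\pi|_V)$.

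The second step is to push everything forward to $X$ and take reflexive hulls. On the left, by the description in Section~\ref{sec:Kahler diff} we have $\Omega_X^{[p]} = j_* \Omega_U^p$, since the reflexive hull of the sheaf of $p$-forms is the pushforward from the smooth locus. On the right, I want to identify $j_*\bigl((\pi|_V)_* \Omega_V^p\bigr)^G$ with $(\pi_* \Omega_{\C^n}^p)^G$. Using $\pi \circ i = j \circ (\pi|_V)$ one gets $j_* (\pi|_V)_* \Omega_V^p = \pi_* i_* \Omega_V^p$, and since taking $G$-invariants commutes with the pushforward $\pi_*$ (as $\pi$ is the quotient map and $G$ acts on the sheaf), the $G$-invariants match up. So the whole question reduces to showing $i_* \Omega_V^p = \Omega_{\C^n}^p$, i.e.\ that $p$-forms defined on $V$ extend across the codimension-$\ge 2$ set $\C^n \minus V$. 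But $\C^n$ is smooth, so $\Omega_{\C^n}^p$ is locally free, hence reflexive and in particular normal; a section over the complement of a codimension-$\ge 2$ analytic subset therefore extends uniquely by the second Riemann extension theorem (Hartogs-type extension for sections of locally free sheaves on a normal space). This gives $i_* \Omega_V^p = \Omega_{\C^n}^p$ and hence, after taking $G$-invariants and pushing forward, the desired isomorphism $\Omega_X^{[p]} \isom (\pi_* \Omega_{\C^n}^p)^G$.

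The main obstacle, and the step to treat most carefully, is the commutation of $G$-invariants with pushforward together with the extension argument: one must check that forming $(-)^G$ and the left-exact functor $\pi_*$ genuinely commute at the level of sheaves (which holds because $G$ is finite and we work over $\C$, so averaging over $G$ gives a projector splitting off the invariants), and that the extension across $\C^n \minus V$ is compatible with the $G$-action so that invariance is preserved in the limit. Since $\C^n \minus V$ is $G$-stable and extension is unique, the extended form of a $G$-invariant section is again $G$-invariant, so no difficulty arises there. The only genuinely nontrivial input is the codimension estimate for the ramification locus, which is exactly where the smallness hypothesis on $G$ is used; everything else is formal once that is in place.
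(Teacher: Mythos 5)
Your argument is correct in substance, and it differs from the paper's ``proof'' only in the sense that the paper does not prove Proposition~\ref{prp:refl quot} at all: it simply cites \cite[Lemma~1.8]{Ste77}. What you have written is essentially a reconstruction of Steenbrink's argument behind that citation: smallness of $G$ forces the non-free locus $R \subset \C^n$ to have codimension at least two, invariant forms descend through the \'etale part of $\pi$, and both $\Omega_X^{[p]}$ and $(\pi_* \Omega_{\C^n}^p)^G$ are determined by their restrictions to the complement of a codimension-two set --- by reflexivity (via $\Omega_X^{[p]} = i_*\Omega_{X_\sm}^p$) on one side, and by the second Riemann extension theorem applied to the locally free sheaf $\Omega_{\C^n}^p$ on the other. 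The bookkeeping with $j_*$, $\pi_*$ and $(-)^G$ is also fine: invariants are a kernel, hence commute with the left-exact pushforwards, and the Hartogs extension of an invariant section is again invariant by uniqueness.

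One step, however, is asserted rather than proved, and the justification you offer does not yield it: you claim that $\pi|_V\colon V \to U$ is \'etale, where $U = X_\sm$ and $V = \pi^{-1}(U)$. Knowing that the non-free locus $R$ has codimension $\ge 2$ only tells you that $\pi$ is \'etale over $X \setminus \pi(R)$; it does not by itself exclude that some point of $R$ maps to a \emph{smooth} point of $X$, i.e.\ that $X_\sm$ is strictly larger than the \'etale locus. Ruling this out is exactly the nontrivial direction of the Chevalley--Shephard--Todd theorem applied to the stabilizer subgroups $G_x$ (these are again small, because the action of $G_x$ near $x$ linearizes to the restriction of the given linear action, so a quasi-reflection in $G_x$ would be one in $G$): a quotient of $\C^n$ by a nontrivial small group is singular, hence $\pi(R) \subset X_\sg$ and $V$ really is the free locus. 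Alternatively, you can avoid Chevalley--Shephard--Todd entirely by running your argument with $U_0 := X \setminus \pi(R)$ in place of $X_\sm$: since $\pi$ is finite, $\pi(R)$ is an analytic subset of codimension $\ge 2$, the map is \'etale over $U_0$ by construction, and one still has $\Omega_X^{[p]} = (j_0)_* \Omega_{U_0}^p$ because $\Omega_X^{[p]}$ is reflexive, $U_0$ lies in $X_\sm$, and $X \setminus U_0$ has codimension $\ge 2$. With either patch the proof is complete.
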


\begin{proof}
See \cite[Lemma~1.8]{Ste77}.
\end{proof}

Now let $\mu_r \subset \C^*$ be the group of $r$-th roots of unity. Given $a_1, \dots, a_n \in \Z / r$, we define an action of $\mu_r$ on $\C^n$ by letting $\eps \in \mu_r$ act as
\[ (x_1, \dots, x_n) \mapsto (\eps^{a_1} \cdot x_1, \dots, \eps^{a_n} \cdot x_n). \]
The resulting singularity $\left( 0 \in \C^n / \mu_r \right)$ is said to be \emph{of type $\frac 1 r ( a_1, \dots, a_n)$}.
By standard results on linearization at fixed points and diagonalization, we see that every cyclic quotient singularity is of some type.
Note, however, that the type is not uniquely determined by the singularity.

\subsection{Koszul complexes} \label{sec:koszul}

Let $R$ be a ring, $N = R^n$ a free $R$-module, and $x = (x_1, \dots, x_n) \in N$. The complex
\[ K(x) = K(x_1, \dots, x_n)\!: 0 \lto R \lto N \lto \bigwedge\nolimits^2 N \lto \cdots \lto \bigwedge\nolimits^n N \lto 0, \]
where $d^p\!: \bigwedge^p N \to \bigwedge^{p+1} N$ is given by $d^p(\sigma) = x \wedge \sigma$,
is called the \emph{Koszul complex} associated to $x \in N$.
The following result about Koszul complexes in the local case will be crucial for us.

\begin{thm}[Cohomology of Koszul complexes] \label{thm:coh Koszul}
With notation as above, assume additionally that $R$ is a noetherian local ring with maximal ideal $\mf m \subset R$, and $x_1, \dots, x_n \in \mf m$. If
\[ H^k(K(x_1, \dots, x_n)) = 0 \quad \text{for some $k \le n$,} \]
then
\[ H^j(K(x_1, \dots, x_n)) = 0 \quad \text{for all $j \le k$.} \]
\end{thm}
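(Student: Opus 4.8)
The plan is to induct on the number $n$ of variables, using the standard realization of the Koszul complex as a mapping cone. Writing $N = N' \oplus R e_n$ with $N' = R^{n-1}$ and splitting the distinguished element as $x = x' + x_n e_n$ (where $x' = (x_1, \dots, x_{n-1})$), one checks that the subspaces $e_n \wedge \bigwedge^{\bullet-1} N'$ constitute a subcomplex of $K = K(x_1,\dots,x_n)$ isomorphic to a shift of $K' := K(x_1,\dots,x_{n-1})$, with quotient again $K'$. This produces a short exact sequence of complexes $0 \to K'[-1] \to K \to K' \to 0$, exhibiting $K$ as the cone of multiplication by $x_n$ on $K'$.

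First I would pass to the long exact cohomology sequence and identify its connecting homomorphism. A direct cocycle computation shows that the connecting map $H^p(K') \to H^{p+1}(K'[-1]) = H^p(K')$ is, up to sign, multiplication by $x_n$: lifting a cocycle $\sigma \in \bigwedge^p N'$ to $K$ and applying the differential $x \wedge (-)$ produces exactly $x_n\,(e_n \wedge \sigma)$, since $x' \wedge \sigma = 0$. Consequently the long exact sequence breaks into short exact sequences
\[ 0 \to \frac{H^{p-1}(K')}{x_n\, H^{p-1}(K')} \to H^p(K) \to \ker\!\big(x_n \colon H^p(K') \to H^p(K')\big) \to 0 \]
for every $p$.

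The key step is then a Nakayama argument. Each $H^p(K')$ is a subquotient of the finite free module $\bigwedge^p N'$, hence finitely generated over the noetherian ring $R$; since $x_n \in \mf m$, Nakayama's lemma converts the vanishing of $H^{p-1}(K')/x_n H^{p-1}(K')$ into the vanishing of $H^{p-1}(K')$ itself. Now assume $H^k(K) = 0$ with $k \le n$. The sequence at $p = k$ forces its left-hand term to vanish, so $H^{k-1}(K') = 0$ by Nakayama. As $k-1 \le n-1$, the inductive hypothesis applied to $K'$ yields $H^j(K') = 0$ for all $j \le k-1$. Feeding this back into the short exact sequences for $j < k$, where now both outer terms vanish, gives $H^j(K) = 0$ for all $j < k$; together with the hypothesis at $j = k$ this is the desired conclusion. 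The base case $n = 0$ is immediate, the complex being $R$ in degree $0$ (so $H^0 = R \ne 0$ and the hypothesis is vacuous).

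The main obstacle I anticipate is bookkeeping rather than conceptual: pinning down the degree shift and the sign in the mapping-cone sequence, and keeping the identification $H^p(K'[-1]) = H^{p-1}(K')$ consistent so that the connecting maps align as multiplication by $x_n$ on a \emph{single} copy of $H^\bullet(K')$. Once that is fixed, the implication chain ``vanishing of one $H^k(K)$ $\Rightarrow$ vanishing of $H^{k-1}(K')$ (Nakayama) $\Rightarrow$ vanishing of $H^j(K')$ for $j \le k-1$ (induction) $\Rightarrow$ vanishing of $H^j(K)$ for $j < k$'' runs automatically.
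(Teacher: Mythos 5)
Your proposal is correct: the decomposition $\bigwedge^\bullet N = \bigwedge^\bullet N' \oplus \big(e_n \wedge \bigwedge^{\bullet-1} N'\big)$ really does yield a short exact sequence of complexes $0 \to K'[-1] \to K \to K' \to 0$, the connecting homomorphism is multiplication by $\pm x_n$ on $H^\bullet(K')$, the resulting short exact sequences are as you state, and the Nakayama-plus-induction chain is sound (the Koszul cohomology modules are finitely generated over the noetherian ring $R$, and the base case $n=0$ works because a local ring is nonzero, so $H^0 = R \ne 0$). However, this is not the paper's proof: the paper disposes of the statement in two lines by citation, first invoking \cite[Cor.~17.5]{Eis95} to get $H^n(K(x_1,\dots,x_n)) \cong R/(x_1,\dots,x_n) \ne 0$, which reduces to the case $k \le n-1$, and then quoting the rigidity/depth-sensitivity theorem \cite[Thm.~17.6]{Eis95}. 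What you have written is in effect a self-contained reproof of that cited theorem, by the standard route (mapping cone of multiplication by $x_n$, long exact sequence, Nakayama). The trade-off: the paper's version is shorter and leans on a standard reference, while yours is self-contained, needs only Nakayama's lemma as input, makes visible exactly where the hypotheses ($R$ local noetherian, $x_i \in \mathfrak{m}$) enter, and moreover handles $k = n$ uniformly, so the separate appeal to $H^n \ne 0$ becomes unnecessary.
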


\begin{proof}
By~\cite[Cor.~17.5]{Eis95}, we have $H^n(K(x_1, \dots, x_n)) \isom R/(x_1, \dots, x_n) \ne 0$. Hence we may assume $k \le n - 1$. In this case, the statement follows from~\cite[Thm.~17.6]{Eis95}.
\end{proof}

\subsection{Depth and projective dimension} \label{sec:depth}

Let $R$ be a ring and $M$ an $R$-module. A sequence $x_1, \dots, x_r \in R$ is called \emph{$M$-regular} if $x_1$ is not a zero divisor in $M$, and $x_i$ is not a zero divisor in $M/(x_1, \dots, x_{i-1})M$ for all $2 \le i \le r$. If $\mf a \subset R$ is an ideal, we define the \emph{depth of $M$ with respect to $\mf a$} to be the maximum length of an $M$-regular sequence contained in $\mf a$. We denote it by $\depth_{\mf a} M$. If $R$ is local with maximal ideal $\mf m$, we set $\depth M := \depth_{\mf m} M$.
These definitions also apply to the ring $R$ considered as a module over itself. We say that a noetherian local ring $R$ is \emph{Cohen--Macaulay} if $\depth R = \dim R$.

The general notion of depth can be reduced to the local case by the following result.

\begin{lem}[Depth with respect to an arbitrary ideal] \label{lem:depth}
Assume that $R$ is noetherian and $M$ is finitely generated. For any ideal $\mf a \subset R$, we have
\[ \depth_{\mf a} M = \inf_{\mf p \supset \mf a} \left\{ \depth M_{\mf p} \right\}. \]
\end{lem}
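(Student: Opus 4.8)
The plan is to argue by induction on $n := \depth_{\mf a} M$, reducing the global depth to the local behavior of depth under quotienting by a regular element. First I would dispose of the degenerate case $\mf a M = M$: here $M_{\mf p} = 0$ for every $\mf p \supseteq \mf a$ (by Nakayama after localizing), so both sides equal $+\infty$ under the usual conventions. I may therefore assume $\mf a M \ne M$, equivalently $V(\mf a) \cap \supp M \ne \emptyset$, so that the infimum on the right is attained at a prime in $\supp M$ and is finite.

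For the base case $n = 0$, the key input is the characterization of depth zero via associated primes. By prime avoidance, $\depth_{\mf a} M = 0$ means exactly that $\mf a$ consists of zero-divisors on $M$, i.e.\ $\mf a \subseteq \bigcup_{\mf p \in \operatorname{Ass} M} \mf p$, hence $\mf a \subseteq \mf p$ for some $\mf p \in \operatorname{Ass} M$. I would then use the noetherian compatibility $\mf p \in \operatorname{Ass} M \iff \mf p R_{\mf p} \in \operatorname{Ass} M_{\mf p} \iff \depth M_{\mf p} = 0$. This shows $\depth_{\mf a} M = 0$ precisely when some prime $\mf p \supseteq \mf a$ has $\depth M_{\mf p} = 0$, which is the desired equality in degree zero.

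For the inductive step with $n > 0$, I would pick $x \in \mf a$ that is $M$-regular, so that $\depth_{\mf a}(M/xM) = n - 1$. Since localization is exact, multiplication by $x$ remains injective on each $M_{\mf p}$; thus for every $\mf p \supseteq \mf a$ with $M_{\mf p} \ne 0$ one has $x \in \mf p R_{\mf p}$ acting as an $M_{\mf p}$-regular element, whence $\depth(M/xM)_{\mf p} = \depth M_{\mf p} - 1$, while primes with $M_{\mf p} = 0$ satisfy $(M/xM)_{\mf p} = 0$ and do not affect either infimum. Applying the induction hypothesis to $M/xM$ and shifting the infimum by one yields
\[ n - 1 = \depth_{\mf a}(M/xM) = \inf_{\mf p \supseteq \mf a} \depth(M/xM)_{\mf p} = \Bigl( \inf_{\mf p \supseteq \mf a} \depth M_{\mf p} \Bigr) - 1, \]
giving the equality in degree $n$.

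The step I expect to be most delicate is the base case, where one must know that associated primes localize correctly and that $\depth M_{\mf p} = 0$ is detected by $\mf p R_{\mf p} \in \operatorname{Ass} M_{\mf p}$; the rest is formal once the behavior of depth under a regular element is in hand. A cleaner but heavier alternative would be to invoke Rees's identity $\depth_{\mf a} M = \min\{ i : \Ext_R^i(R/\mf a, M) \ne 0 \}$ together with the compatibility of $\Ext$ with localization, bypassing the associated-prime computation entirely.
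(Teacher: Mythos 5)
Your proposal is correct, but it necessarily differs from the paper, because the paper does not prove this lemma at all: its ``proof'' is a one-line citation to \cite{BS76}, Ch.~II, Cor.~1.22. What you have written is essentially the standard textbook argument (cf.\ Bruns--Herzog, Prop.~1.2.10(a)): the degenerate case $\mf a M = M$ via Nakayama (note that with the paper's definition of regular sequence, which does not require the final quotient to be nonzero, $\depth_{\mf a} M = +\infty$ holds automatically in this case, so your convention matches the definition); the base case via ``zero divisors $=$ union of the finitely many associated primes,'' prime avoidance, and the compatibility $\mf p \in \operatorname{Ass} M \iff \mf p R_{\mf p} \in \operatorname{Ass} M_{\mf p} \iff \depth M_{\mf p} = 0$; and the inductive step by passing to $M/xM$. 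All of these steps are valid for noetherian $R$ and finitely generated $M$. The one caveat worth making explicit: the identity $\depth_{\mf a}(M/xM) = \depth_{\mf a} M - 1$ (and its local counterpart at each $\mf p$) is where the real content hides. The inequality $\depth_{\mf a}(M/xM) \le \depth_{\mf a} M - 1$ is elementary (prepend $x$ to any regular sequence on $M/xM$), but the reverse inequality requires knowing that the chosen $M$-regular element $x \in \mf a$ extends to a regular sequence in $\mf a$ of the full length $n$, i.e.\ that all maximal $M$-regular sequences in $\mf a$ have the same length --- this is Rees's theorem, usually proved via $\depth_{\mf a} M = \min\{ i : \Ext_R^i(R/\mf a, M) \ne 0 \}$. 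So your ``elementary'' induction is not actually lighter than the $\Ext$-theoretic alternative you mention at the end; it silently rests on it. That said, invoking this standard fact is perfectly legitimate, and your write-up has the advantage over the paper's bare citation of making the logical dependencies (Rees's theorem, localization of associated primes) visible.
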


\begin{proof}
See~\cite[Ch.~II, Cor.~1.22]{BS76}.
\end{proof}

For a sheafified notion of depth, let $X$ be a complex space, $Z \subset X$ a closed analytic subset with ideal sheaf $\mf a \subset \O_X$, and $\sF$ a coherent sheaf on $X$. We define
\[ \depth_Z \sF = \inf_{x \in Z} \left\{ \depth_{\mf a_x} \sF_x \right\}. \]

Now let $R$ be a noetherian local ring and $M$ a finitely generated $R$-module. An exact sequence
\[ 0 \lto F_n \lto F_{n-1} \lto \cdots \lto F_1 \lto F_0 \lto M \lto 0 \]
where the $F_i$ are free $R$-modules, is called a \emph{(finite) free resolution} of $M$ of length $n$. The \emph{projective dimension} of $M$, denoted $\pd_R M$, is defined to be the minimum length of a free resolution of $M$, or $+\infty$ if $M$ does not have a finite free resolution. Depth and projective dimension are related by the following famous theorem:

\begin{thm}[Auslander--Buchsbaum formula] \label{thm:AB}
Let $R$ be a noetherian local ring. For any finitely generated $R$-module $M$ of finite projective dimension, we have
\[ \depth M = \depth R - \pd_R M. \]
\end{thm}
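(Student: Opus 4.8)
The plan is to prove the equivalent identity $\pd_R M + \depth M = \depth R$ by induction on $d := \pd_R M$, using the characterization of depth via $\Ext$. Write $k := R/\mf m$ and $s := \depth R$. The starting point is Rees's theorem (see \cite{Eis95}), which gives $\depth_R M = \min\{\, i : \Ext^i_R(k, M) \ne 0 \,\}$. Applied to $R$ itself, and hence to any nonzero finite free module $F$, this yields $\Ext^i_R(k, F) = 0$ for $i < s$ and $\Ext^s_R(k, F) \ne 0$. I would also record one elementary observation that will carry the key step: since $\mf m$ annihilates $k$, functoriality forces $\mf m$ to annihilate every $\Ext^i_R(k, R)$; consequently an $R$-linear map of finite free modules whose matrix has all entries in $\mf m$ induces the \emph{zero} map on $\Ext^i_R(k, -)$ for every $i$.

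The inductive engine is a \emph{minimal} free resolution $0 \to F_d \to \cdots \to F_0 \to M \to 0$, whose length is exactly $d$ and whose differentials $\partial_j$ all have entries in $\mf m$. Let $N_0 := M$ and $N_j := \img(\partial_j)$ for $j \ge 1$, the $j$-th syzygy, so that $N_d \isom F_d$ is free. Breaking the resolution into short exact sequences $0 \to N_{j+1} \to F_j \to N_j \to 0$ and feeding them into the long exact sequences for $\Ext^\bullet_R(k, -)$, the vanishing $\Ext^i_R(k, F_j) = 0$ for $i < s$ produces the dimension-shifting isomorphisms $\Ext^i_R(k, N_j) \isom \Ext^{i+1}_R(k, N_{j+1})$ valid whenever $i + 1 < s$. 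Iterating these from $N_0 = M$ up to $N_d = F_d$ shows $\Ext^i_R(k, M) \isom \Ext^{i+d}_R(k, F_d) = 0$ for all $i < s - d$, so $\depth M \ge s - d$.

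It remains to establish nonvanishing in degree $s - d$, and this is exactly where the main obstacle lies: one further shift would land in degree $s$, where $\Ext^s_R(k, F_j)$ no longer vanishes and the clean isomorphism breaks down. The remedy is precisely the minimality of the resolution. Shifting as far as permitted gives $\Ext^{s-d}_R(k, M) \isom \Ext^{s-1}_R(k, N_{d-1})$, and for the final short exact sequence $0 \to F_d \to F_{d-1} \to N_{d-1} \to 0$ I would read off the long exact sequence in degree $s - 1$:
\[ \Ext^{s-1}_R(k, F_{d-1}) \lto \Ext^{s-1}_R(k, N_{d-1}) \xrightarrow{\ \delta\ } \Ext^s_R(k, F_d) \xrightarrow{(\partial_d)_*} \Ext^s_R(k, F_{d-1}). \]
The leftmost term vanishes since $s - 1 < s$, so $\delta$ identifies $\Ext^{s-1}_R(k, N_{d-1})$ with $\ker (\partial_d)_*$. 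But $\partial_d$ has entries in $\mf m$, so by the observation above $(\partial_d)_* = 0$, whence $\ker (\partial_d)_* = \Ext^s_R(k, F_d) \ne 0$. Therefore $\Ext^{s-d}_R(k, M) \ne 0$, and combined with the previous paragraph this gives $\depth M = s - d$. The base case $d = 0$ is immediate, since then $M$ is free and $\depth M = \depth R$; this completes the induction. (The result is of course classical; cf.~\cite[Thm.~19.9]{Eis95}.)
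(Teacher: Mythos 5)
Your proof is correct, but there is nothing in the paper to compare it against: Theorem~\ref{thm:AB} is recalled in the Preparations section as a known tool, and the paper's entire ``proof'' is the citation \cite[Thm.~19.9]{Eis95}. What you have written is a genuine, self-contained argument of the classical homological type: Rees's characterization $\depth N = \min\{\,i : \Ext^i_R(k,N) \ne 0\,\}$, dimension shifting along the syzygies $N_j$ of a \emph{minimal} free resolution, and, as the decisive point, the observation that a matrix with entries in $\mf m$ induces the zero map on $\Ext^i_R(k,-)$, so that in the last short exact sequence the connecting map identifies $\Ext^{s-1}_R(k, N_{d-1})$ with all of $\Ext^s_R(k,F_d) \ne 0$. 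I checked the delicate points and they are handled correctly: the shifting isomorphism $\Ext^i_R(k,N_j) \isom \Ext^{i+1}_R(k,N_{j+1})$ indeed needs only $i+1 < s$, your chain stops exactly at degree $s-1$ where the vanishing of $\Ext$ on the free modules would first fail, the nonvanishing of $\Ext^s_R(k,F_d)$ uses $F_d \ne 0$ (guaranteed since the minimal resolution has length exactly $d$), and the seemingly worrisome possibility $s - d < 0$ resolves itself, since your nonvanishing conclusion $\Ext^{s-d}_R(k,M) \ne 0$ is incompatible with the vanishing of $\Ext$ in negative degrees, so the argument silently proves $\pd_R M \le \depth R$ as well. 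Two cosmetic remarks: despite the announced plan, this is not actually an induction on $d$ — no inductive hypothesis is ever invoked, it is a direct dimension-shifting argument with the free case $d=0$ split off — and your proof does assume Rees's theorem and the existence and rigidity of minimal free resolutions, results at essentially the same depth as the cited \cite[Thm.~19.9]{Eis95}; what it buys in exchange is that the reader never has to leave the page.
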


\begin{proof}
See~\cite[Thm.~19.9]{Eis95}.
\end{proof}

\subsection{A result of Vetter}

The following result of Vetter will be used in the proof of Theorem~\ref{thm:co-tor h}.

\begin{thm}[Torsion and cotorsion of a complete intersection] \label{thm:Vetter}
Let $(x \in X)$ be a (non-smooth) complete intersection singularity, and let $d$ denote the local codimension of the singular locus of $X$ at $x$. Then we have
\[ \tor \Omega_{X,x}^p
\begin{cases}
= 0,   & 1 \le p \le d - 1, \\
\ne 0, & p = d,
\end{cases}
\]
as well as
\[ \cotor \Omega_{X,x}^p
\begin{cases}
= 0,   & 1 \le p \le d - 2, \\
\ne 0, & p = \max \{ 1, d - 1 \}.
\end{cases}
\]
\end{thm}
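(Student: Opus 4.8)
The plan is to reduce the statement to a depth computation along the singular locus, carried out via the conormal sequence. Realize the germ as $X = V(f_1,\dots,f_c) \subset Y \isom (\C^{N},0)$ with $n = N-c$. Because $X$ is a complete intersection, the conormal module $I/I^2$ is free of rank $c$; and since $X$ is reduced and Cohen--Macaulay, the natural map to $\Omega_Y^1|_X$ is injective (its kernel is supported in positive codimension and hence vanishes over the reduced Cohen--Macaulay ring $\O_X$). Thus the second fundamental sequence is a genuine short exact sequence of $\O_X$-modules
\[ 0 \lto \O_X^{\,c} \xrightarrow{\ \phi\ } V \lto \Omega_X^1 \lto 0, \qquad V := \Omega_Y^1|_X \isom \O_X^{\,N}, \quad \phi(e_i) = \d f_i. \]
The locus where $\phi$ drops rank --- the vanishing of the $c\times c$ minors of the Jacobian $(\partial f_i/\partial x_j)$ --- is exactly $\Sigma := \sing X$, of codimension $d$ by hypothesis, and $\depth_\Sigma \O_X = d$ since $X$ is Cohen--Macaulay (Lemma~\ref{lem:depth}).

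Next I would translate torsion and cotorsion into depth along $\Sigma$. On $X_\sm$ the sheaf $\Omega_X^p$ is locally free, so its torsion is exactly the subsheaf supported on $\Sigma$; together with the four-term sequence of Section~\ref{sec:refl} and Serre's reflexivity criterion this gives, at any $x \in \Sigma$ (with $d \ge 2$),
\[ \tor \Omega_{X,x}^p = \cotor \Omega_{X,x}^p = 0 \iff \depth_\Sigma \Omega_{X,x}^p \ge 2, \qquad \tor \Omega_{X,x}^p \ne 0 \iff \depth_\Sigma \Omega_{X,x}^p = 0, \]
while the intermediate case $\depth_\Sigma \Omega_{X,x}^p = 1$ is equivalent to $\tor \Omega_{X,x}^p = 0$ together with $\cotor \Omega_{X,x}^p \ne 0$. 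So the theorem amounts to showing $\depth_\Sigma \Omega_X^p \ge 2$ for $p \le d-2$, $\depth_\Sigma \Omega_X^{d-1} = 1$, and $\depth_\Sigma \Omega_X^{d} = 0$. To compute these depths I would take $p$-th exterior powers of the conormal sequence. Off $\Sigma$, where the sequence splits, this yields the exact Koszul-type resolution
\[ 0 \to \textstyle\bigwedge^{\min(p,c)}\O_X^{\,c}\otimes\bigwedge^{p-\min(p,c)}V \to \cdots \to \O_X^{\,c}\otimes\bigwedge^{p-1}V \to \bigwedge^{p}V \to \Omega_X^p \to 0, \]
a complex $C_p$ of free $\O_X$-modules whose homology is supported on $\Sigma$ and which --- generalizing Theorem~\ref{thm:co-tor h}.\ref{itm:co-tor h.1} to complete intersections --- computes $\tor\Omega_X^p$ and $\cotor\Omega_X^{p-1}$.

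In the hypersurface case $c=1$ this collapses to the two-term complex $\O_X\otimes\bigwedge^{p-1}V \xrightarrow{\d f\wedge-}\bigwedge^{p}V$, i.e.\ (in the basis $\d x_1,\dots,\d x_N$) a strand of the genuine Koszul complex of the partials $\partial_1 f,\dots,\partial_N f \in \O_X$. These partials lie in $\mf m_{X,x}$ at a singular point and cut out $\Sigma$, so Theorem~\ref{thm:coh Koszul} applies verbatim: the vanishing locus of Koszul cohomology is an initial segment, and by depth-sensitivity combined with $\depth_\Sigma\O_X = d$ the first non-vanishing cohomology occurs precisely in degree $d$. This is exactly the $c=1$ case. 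For $c>1$ the complex $C_p$ is no longer the Koszul complex of a sequence of ring elements but a generalized (Buchsbaum--Rim / Eagon--Northcott) Koszul complex attached to $\phi$; here I would invoke the Buchsbaum--Eisenbud acyclicity criterion, whose hypotheses are grade bounds on the determinantal ideals $I_t(\phi)$. All of these loci are governed by $\codim_X\Sigma = d$, which pins down the vanishing half of the thresholds, namely $\depth_\Sigma\Omega_X^p \ge 2$ for $p\le d-2$ and $\depth_\Sigma\Omega_X^{d-1}=1$. The cotorsion statement, including the truncation $p=\max\{1,d-1\}$, then follows from the same complex (the cotorsion in degree $p-1$ being tied to the torsion in degree $p$), with the non-normal case $d=1$ checked by hand.

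The hard part will be the sharp \emph{non-vanishing} at the threshold $p=d$ when $c>1$: acyclicity criteria deliver the vanishing cleanly, but producing a genuinely nonzero torsion $d$-form requires more. I would obtain it by localizing at a general point of a $d$-codimensional component of $\Sigma$, where after a coordinate change $X$ splits --- up to a smooth factor transverse to $\Sigma$ --- as a complete intersection of dimension $d$ with an \emph{isolated} singularity. For isolated complete intersection singularities the non-vanishing of $\tor\Omega^{d}$ can be checked directly (equivalently through Greuel's description of the torsion), and a K\"unneth-type argument for K\"ahler differentials propagates this back to $X$, completing the determination of all thresholds.
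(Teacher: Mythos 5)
The paper does not actually prove this statement: it is quoted as a black box from \cite[Satz~4]{Vet70}, so the only in-paper argument you can be measured against is the hypersurface machinery of Theorem~\ref{thm:co-tor h}. Your overall strategy (conormal sequence, exterior-power complexes, grade along the singular locus $\Sigma$, Auslander--Buchsbaum) is the right family of ideas, but two steps are genuinely broken. First, your complex $C_p$ is not a resolution, even off $\Sigma$. For a presentation $0 \to F \xrightarrow{\ \phi\ } V \to M \to 0$ the correct left resolution of $\bigwedge^p M$ has terms $S_iF \otimes \bigwedge^{p-i}V$ (symmetric, or divided, powers of $F$) for $0 \le i \le p$, \emph{not} $\bigwedge^i F \otimes \bigwedge^{p-i}V$ for $i \le \min(p,c)$. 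Already for $c=1$ and $p \ge 2$ your map $F \otimes \bigwedge^{p-1}V \to \bigwedge^p V$, $\sigma \mapsto \d f \wedge \sigma$, kills the generically nonzero subsheaf $F \otimes \bigl(\d f \wedge \bigwedge^{p-2}V\bigr)$, so it is never injective; consequently your ``two-term collapse'' in the hypersurface case is \emph{not} the paper's complex $K$ (which is the full Koszul complex, of length $n+2$), and Theorem~\ref{thm:coh Koszul} cannot be applied to it ``verbatim''. Once the complex is corrected, your Buchsbaum--Eisenbud plus Auslander--Buchsbaum (Theorem~\ref{thm:AB}) argument does recover the \emph{vanishing} half: off $\Sigma$ the corrected complex is split exact, so every differential degenerates only along $\Sigma$; Cohen--Macaulayness of $\O_X$ makes all the relevant grades at least $d$, the length-$p$ complex is acyclic for $p \le d$, and localizing at primes of $\Sigma$ gives $\depth_\Sigma \Omega_X^p \ge d-p$, hence reflexivity for $p \le d-2$ and torsion-freeness for $p = d-1$.

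Second, and more seriously, the non-vanishing thresholds. Your reduction of $\tor \Omega_{X,x}^d \ne 0$ to the isolated case rests on the claim that at a general point of a codimension-$d$ component of $\Sigma$ the germ of $X$ is analytically a product of a smooth factor with an isolated complete intersection singularity. This is false in general: the transverse singularity type can vary along $\Sigma$, so no analytic product decomposition exists even at general points (e.g.\ $\{x^4 + y^4 + z\,x^2y^2 = 0\} \subset \C^3$ is singular exactly along the $z$-axis, and the cross-ratio of the four branches of the transverse slice varies with $z$). Localizing at the generic point of such a component is legitimate, but it produces a local ring whose residue field is not $\C$, so Greuel's results \cite{Gre80} for germs of isolated singularities do not apply directly, and the ``K\"unneth-type'' propagation back to $X$ is unsubstantiated. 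Moreover, both this step and your cotorsion claim at $p = d-1$ lean on the identification of the homology of the (corrected) complex with $\tor \Omega_X^p$ and $\cotor \Omega_X^{p-1}$, which you merely assert ``generalizes'' Theorem~\ref{thm:co-tor h}.\ref{itm:co-tor h.1}. That identification is the hardest part of the paper's hypersurface argument --- the residue machinery of Propositions~\ref{prp:rho}, \ref{prp:cotor} and~\ref{prp:tor} --- and it is specific to $c = 1$; you offer no substitute for $c > 1$. As it stands, after repairing the complex your proposal proves only the vanishing statements; the assertions $\tor \Omega_{X,x}^d \ne 0$ and $\cotor \Omega_{X,x}^{d-1} \ne 0$ remain unproved.
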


\begin{proof}
See~\cite[Satz 4]{Vet70}.
\end{proof}

\section{Terminal singularities and the case of $(n - 1)$-forms}

In this section, we prove Proposition~\ref{prp:gen LZ MMP}, Theorem~\ref{thm:gen LZ n-1}, and Corollary~\ref{cor:torus quot}.

\subsection{Proof of Proposition~\ref{prp:gen LZ MMP}}

By~\cite[Sec.~4.1]{GR11}, the cone over the second Veronese embedding $\P^2 \subset \P^5$ can also be described as the quotient singularity $\frac 1 2 (1, 1, 1)$.
Likewise, the cone over the second Veronese embedding $\P^3 \subset \P^9$ can be described as the quotient singularity $\frac 1 2 (1, 1, 1, 1)$.
We will discuss these singularities in terms of quotients.

By~\cite[(4.11), Theorem]{Rei87}, both of them are terminal.
Concerning part a) of Proposition~\ref{prp:gen LZ MMP}, let $R = \C[x, y, z]$ be the affine coordinate ring of $\C^3$, such that the action of the generator of $G = \Z / 2$ is given by $x \mapsto -x, y \mapsto -y, z \mapsto -z$.
It is easy to see that the $R^G$-module $(\Omega_{\C^3}^2)^G$ of $G$-invariant $2$-forms is freely generated by
\[ \d x \wedge \d y, \; \d x \wedge \d z, \text{ and } \d y \wedge \d z. \]
Using Proposition~\ref{prp:refl quot}, Claim a) follows.

For Claim b), let $x, y, z, w$ be coordinates on $\C^4$. The same argument as before shows that $\Omega_X^{[2]}$ is free. Furthermore, as $(\Omega_{\C^4}^4)^G$ is a free $R^G$-module generated by
\[ \d x \wedge \d y \wedge \d z \wedge \d w, \]
it follows from Proposition~\ref{prp:refl quot} again that $X$ is Gorenstein. \qed

\subsection{Proof of Theorem~\ref{thm:gen LZ n-1}}

We start with two well-known lemmas.

\begin{lem}[Determinant of an exterior power] \label{lem:det wedge}
Let $X$ be a normal complex space and $\sE$ a reflexive sheaf of rank $r$ on $X$. Then we have
\[ \det \left( \bigwedge\nolimits^{[r-1]} \sE \right) \isom ( \det \sE ) ^{[r-1]}. \]
\end{lem}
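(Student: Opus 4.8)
The plan is to reduce both sides to an open subset where $\sE$ is locally free and then to carry out a standard computation with vector bundles. Since $X$ is normal, there is an open set $U \subset X$ on which $\sE|_U$ is locally free of rank $r$ and whose complement has codimension at least two (combine the fact that $X$ is smooth in codimension one with the fact that a reflexive sheaf is locally free outside a closed set of codimension at least two). Both sides of the claimed isomorphism are reflexive sheaves of rank one: the right-hand side is a reflexive tensor power of the rank-one reflexive sheaf $\det \sE$, while the left-hand side is the determinant, hence a reflexive hull, of the rank-$r$ reflexive sheaf $\bigwedge^{[r-1]} \sE$. By the standard extension property of reflexive sheaves on normal spaces (cf.\ \cite[Sec.~3]{Gra12}), any reflexive sheaf $\sF$ satisfies $\sF \isom i_*(\sF|_U)$ whenever $i\!: U \inj X$ is the inclusion of an open set with $\codim(X \minus U) \ge 2$. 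It therefore suffices to construct a natural isomorphism over $U$ and push it forward by $i_*$; here I also use that forming exterior powers, tensor powers, duals, and reflexive hulls all commute with restriction to open sets, so that on $U$ the reflexive operations agree with the ordinary ones.

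Over $U$ the statement becomes the purely formal vector-bundle identity
\[ \det\!\left( \bigwedge\nolimits^{r-1} E \right) \isom (\det E)^{\tensor (r-1)}, \]
where I abbreviate $E := \sE|_U$. To prove it, I would first use the perfect pairing $\bigwedge^{r-1} E \tensor E \to \bigwedge^r E = \det E$ for a locally free sheaf $E$ of rank $r$, which yields a natural isomorphism $\bigwedge^{r-1} E \isom E^* \tensor \det E$. Then I would invoke the determinant formula $\det(\sF \tensor \sL) \isom (\det \sF) \tensor \sL^{\tensor s}$, valid for any locally free sheaf $\sF$ of rank $s$ and any line bundle $\sL$. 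Applying it with $\sF = E^*$ (of rank $r$) and $\sL = \det E$, and using $\det(E^*) \isom (\det E)^{-1}$, gives
\[ \det\!\left( \bigwedge\nolimits^{r-1} E \right) \isom \det(E^* \tensor \det E) \isom (\det E)^{-1} \tensor (\det E)^{\tensor r} \isom (\det E)^{\tensor (r-1)}, \]
which is exactly the required identity over $U$.

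The only delicate point is the bookkeeping in the reduction step: one must check that the three reflexive functors appearing in the statement genuinely commute with restriction to $U$ and with $i_*$, so that the isomorphism built over $U$ extends uniquely to all of $X$. This is routine given the facts about reflexive sheaves on normal spaces recalled in Section~\ref{sec:refl}, but it is where the normality hypothesis is essential. The vector-bundle computation over $U$ is entirely formal and presents no obstacle.
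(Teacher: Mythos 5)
Your proof is correct, and its framework coincides with the paper's: both reduce, via reflexivity of the two sides and the extension property $\sF \isom i_*(\sF|_U)$, to the open locus $U$ where $\sE$ is locally free (the paper asserts that this locus has complement of codimension $\ge 3$, but, as in your write-up, codimension $\ge 2$ is all the argument needs). Where you genuinely diverge is in the proof of the vector-bundle identity over $U$. The paper chooses a local trivialization of $\sE$ and compares transition matrices, reducing everything to the pointwise linear-algebra fact that $\det\bigl(\bigwedge\nolimits^{r-1} f\bigr) = (\det f)^{r-1}$ for an endomorphism $f$ of $\C^r$, which it verifies for diagonalizable $f$ and then for all $f$ by continuity/density. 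You instead argue functorially: the perfect pairing $\bigwedge\nolimits^{r-1} E \tensor E \to \det E$ gives $\bigwedge\nolimits^{r-1} E \isom E^* \tensor \det E$, and then $\det(E^* \tensor \det E) \isom \det(E^*) \tensor (\det E)^{\tensor r} \isom (\det E)^{\tensor (r-1)}$. Both computations are standard and correct; your route buys a canonical isomorphism, natural in $E$, which makes the gluing and the extension across $X \minus U$ automatic, whereas the paper's transition-matrix argument is more elementary (pure linear algebra) but produces the isomorphism only after implicitly checking independence of the chosen trivialization.
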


\begin{proof}
As both sides are reflexive and the locus where $\sE$ is not locally free has codimension $\ge 3$ in $X$, we may assume that $\sE$ is locally free. Choosing a local trivialization of $\sE$ and comparing the transition matrices of both sides, we reduce to the following assertion:
If $f\!: \C^r \to \C^r$ is a linear endomorphism, then
\[ \det \left( \bigwedge\nolimits^{r-1} f \right) = (\det f)^{r-1}. \]
This is clearly true if $f$ is diagonalizable. By a continuity argument, the equality follows for any endomorphism $f$.
\end{proof}

\begin{lem}[Wedge pairing of reflexive differentials] \label{lem:wedge pairing}
Let $X$ be a normal complex space of dimension $n$. Then for any $1 \le p \le n - 1$, the pairing
\[ \Omega_X^{[p]} \x \Omega_X^{[n-p]} \to \Omega_X^{[n]} \]
given by the wedge product is a perfect pairing.
In particular, if $\Omega_X^{[n]}$ and $\Omega_X^{[p]}$ are locally free, then so is $\Omega_X^{[n-p]}$.
\end{lem}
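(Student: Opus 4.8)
The plan is to read ``perfect pairing'' as the statement that the morphism
\[ \Phi\colon \Omega_X^{[p]} \lto \sHom_{\O_X}\bigl(\Omega_X^{[n-p]}, \Omega_X^{[n]}\bigr) \]
adjoint to the wedge product is an isomorphism (and symmetrically with the roles of $p$ and $n-p$ interchanged). Both the source and the target of $\Phi$ are reflexive: the source is reflexive by definition, and the target is reflexive because $\sHom$ into a reflexive sheaf over a normal space is again reflexive (Section~\ref{sec:refl}). Since $X$ is normal, the singular locus $Z := X \minus X_\sm$ has codimension at least two, and a reflexive sheaf equals the pushforward under $i\colon X_\sm \inj X$ of its restriction to $X_\sm$ (indeed $\Omega_X^{[p]} = i_* \Omega_{X_\sm}^p$). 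Consequently $\Phi = i_*(\Phi|_{X_\sm})$, so it suffices to prove that $\Phi$ is an isomorphism over the smooth locus; applying $i_*$ then yields the statement over all of $X$.

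First I would restrict to $X_\sm$, which is a complex manifold of dimension $n$. There the three sheaves are the honest locally free sheaves $\Omega^p$, $\Omega^{n-p}$, $\Omega^n$, and the claim reduces pointwise to an elementary fact of multilinear algebra: for an $n$-dimensional vector space $V$, the wedge product $\bigwedge^p V \x \bigwedge^{n-p} V \to \bigwedge^n V$ is a perfect pairing, i.e.\ the induced map $\bigwedge^p V \to \Hom(\bigwedge^{n-p} V, \bigwedge^n V)$ is an isomorphism. Working in a basis $e_1, \dots, e_n$, one checks that $e_I \wedge e_J = 0$ unless $J = I\k$, so the pairing matrix on the standard bases is a signed permutation matrix; in particular the map is injective, and since both sides have dimension $\binom n p$ it is an isomorphism. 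Hence $\Phi|_{X_\sm}$ is an isomorphism.

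For the final assertion, the perfect pairing supplies in particular an isomorphism
\[ \Omega_X^{[n-p]} \isom \sHom_{\O_X}\bigl(\Omega_X^{[p]}, \Omega_X^{[n]}\bigr). \]
Now assume $\Omega_X^{[p]}$ and $\Omega_X^{[n]}$ are locally free. Since $\Omega_X^{[p]}$ is locally free, the natural map $(\Omega_X^{[p]})^* \tensor \Omega_X^{[n]} \to \sHom_{\O_X}(\Omega_X^{[p]}, \Omega_X^{[n]})$ is an isomorphism, exhibiting the right-hand side above as a tensor product of two locally free sheaves (here $\Omega_X^{[n]}$ is locally free by hypothesis, and the dual of a locally free sheaf is locally free). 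Therefore $\Omega_X^{[n-p]}$ is locally free, as claimed.

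The argument is essentially formal once the adjunction is set up. The only point requiring a little care is the extension step: one must use that both $\Omega_X^{[p]}$ and the relevant $\sHom$ sheaf are reflexive, hence determined by their restriction to the codimension-$\ge 2$ complement $X_\sm$, so that an isomorphism over $X_\sm$ automatically extends. No deeper input is needed beyond the standard machinery of reflexive sheaves on normal complex spaces recalled in Section~\ref{sec:refl}.
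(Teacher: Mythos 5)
Your proposal is correct and takes essentially the same route as the paper: both read the perfect pairing as the assertion that the adjoint map $\Omega_X^{[p]} \to \sHom_{\O_X}\bigl(\Omega_X^{[n-p]}, \Omega_X^{[n]}\bigr)$ is an isomorphism, reduce to the smooth locus using reflexivity of both sides, and finish by a local linear-algebra computation. Your write-up simply fills in the details the paper dismisses as ``well-known'' (the signed-permutation-matrix check and the explicit deduction of the final local-freeness claim).
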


\begin{proof}
The claim is that the induced map
\[ \Omega_X^{[p]} \to \sHom_{\sO_X}( \Omega_X^{[n-p]}, \Omega_X^{[n]} ) \]
is an isomorphism. As both sides are reflexive, we may assume that $X$ is smooth. In this case, the assertion is well-known, and easy to check by a local computation.
\end{proof}

The following proposition is at the heart of the proof of Theorem~\ref{thm:gen LZ n-1}.

\begin{prp}[Cyclic quotients with free reflexive differentials] \label{prp:quot free}
Let $(x \in X)$ be a cyclic quotient singularity of dimension $n$. If $\Omega_X^{[n-1]}$ is free, then $(x \in X)$ is of type $\frac 1 s (1, \dots, 1)$, where $s$ divides $n - 1$.
\end{prp}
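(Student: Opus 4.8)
The plan is to use Steenbrink's description of reflexive differentials on a quotient singularity (Proposition~\ref{prp:refl quot}) to compute $\Omega_X^{[n-1]}$ explicitly as the module of $\mu_r$-invariant $(n-1)$-forms, and then to translate the freeness hypothesis into an arithmetic condition on the weights $a_1, \dots, a_n$ that forces them all to be equal modulo $r$. First I would reduce to the standard setup: after passing to the small subgroup (using $G_{\mr{big}}$ as in Section~\ref{sec:quot sg}), I may assume $X = \C^n / \mu_r$ is of type $\frac 1 r (a_1, \dots, a_n)$ with the $\mu_r$-action containing no quasi-reflections, i.e.\ no eigenvalue (equivalently, no residue $a_i$) equals $0$ except in a way that would give a quasi-reflection. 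Smallness translates into: for each $i$, at most one weight can be congruent to $0$, and more precisely the action has no fixed hyperplane.

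The heart of the argument is the invariant-theoretic computation. A basis of $\Omega_{\C^n}^{n-1}$ is given by the contracted forms $\omega_i := \d x_1 \wedge \cdots \wedge \widehat{\d x_i} \wedge \cdots \wedge \d x_n$ for $1 \le i \le n$, on which $\eps \in \mu_r$ acts by the weight $\sum_{j \ne i} a_j = \left(\sum_{j=1}^n a_j\right) - a_i =: A - a_i$. By Proposition~\ref{prp:refl quot}, $\Omega_X^{[n-1]} \isom (\pi_* \Omega_{\C^n}^{n-1})^{\mu_r}$, so a global generating set consists of monomials $x^{\mathbf{b}}\, \omega_i$ whose total weight $\langle \mathbf{b}, a \rangle + (A - a_i)$ vanishes modulo $r$. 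The freeness of this reflexive module of rank $n$ over the local ring $\O_{X,x}$ is a strong constraint: I would argue that if $\Omega_X^{[n-1]}$ is free, then it must be generated by the $n$ forms $\omega_1, \dots, \omega_n$ themselves (the ``constant'' generators, with $\mathbf{b} = 0$), since a minimal generating set of a free module over a local ring is a basis and its cardinality equals the rank $n$. For each $\omega_i$ to be invariant on its own we need $A - a_i \equiv 0 \pmod r$ for all $i$, which gives $a_1 \equiv a_2 \equiv \cdots \equiv a_n \equiv A \pmod r$.

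Once all weights agree, say $a_i \equiv a \pmod r$ for all $i$, the singularity is of type $\frac 1 r (a, \dots, a)$; replacing the generator of $\mu_r$ by a suitable power (equivalently, rescaling $a$ to a unit times itself) and setting $s := r / \gcd(a, r)$, I can normalize to type $\frac 1 s (1, \dots, 1)$. It then remains to check the divisibility $s \mid n-1$. This should fall out of the same invariance condition applied once more: with all weights equal to $1$, the common weight of each $\omega_i$ is $A - 1 = (n-1) \cdot 1 = n - 1$, and invariance of $\omega_i$ forces $n - 1 \equiv 0 \pmod s$, i.e.\ $s \mid n - 1$, which is exactly the claimed conclusion. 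The main obstacle I anticipate is the middle step: rigorously ruling out that freeness could be achieved by a mixed basis involving nonconstant monomials $x^{\mathbf{b}}\omega_i$ rather than the pure forms $\omega_i$. To handle this cleanly I would work with the $\mu_r$-grading (or, equivalently, the $\Z/r$-grading on $\O_{\C^n}$ by characters) and note that the module $(\pi_*\Omega_{\C^n}^{n-1})^{\mu_r}$ is graded; freeness of a graded module over the graded local ring $\O_{X,x}$ forces the minimal generators to lie in the lowest degrees, and a degree count comparing the Poincaré series of the free module $\O_X^{\oplus n}$ with that of the invariants should pin down that the generators are precisely the $\omega_i$ with no polynomial factor, yielding the equality of weights.
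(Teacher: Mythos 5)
Your setup --- Steenbrink's Proposition~\ref{prp:refl quot}, the weight decomposition of the invariant $(n-1)$-forms, and the closing arithmetic once the weights are known to be equal --- is the same as the paper's, and you have correctly isolated the crux: freeness must force each pure form $\omega_i = \d x_1 \wedge \cdots \wedge \wh{\d x_i} \wedge \cdots \wedge \d x_n$ to be invariant, i.e.\ $A - a_i \equiv 0 \pmod r$ for every $i$. But your justification of this crux has a genuine gap. The fact that a minimal generating set of a free module over a local ring has cardinality equal to the rank does not exclude a basis of mixed generators $x^{\mathbf b}\,\omega_i$ with $\mathbf b \ne 0$, and the proposed repair by Poincar\'e series is circular: comparing with the Poincar\'e series of ``the free module $\O_X^{\oplus n}$'' presupposes that the module is generated in degree zero, which is exactly what is to be proved; a graded free module can perfectly well be of the form $\bigoplus_k S(-d_k)$ with all $d_k > 0$, and there is no principle forcing minimal generators into ``lowest degrees''. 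A telling symptom: your key step nowhere uses smallness of the group, yet without smallness the claim is false. For the non-small action of type $\frac 1 2 (1, 0)$ on $\C^2$, the module of invariant $1$-forms is free over the invariant ring with the mixed basis $\{ x\,\d x,\; \d y \}$ (indeed $X \isom \C^2$ is smooth here), while the weights $1, 0$ are not equal. So any correct proof of your key claim must invoke smallness somewhere, and your sketch never does.

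The paper fills precisely this gap with an elementary counting argument, run contrapositively. Smallness gives Claim~\ref{clm:qf2}: any $n - 1$ of the weights generate $\Z/r$. If some complementary sum, say $a_2 + \cdots + a_n$, is nonzero, then the semigroup $\Gamma_1$ of exponents of weight $-(a_2 + \cdots + a_n)$ does not contain $0$, and Claim~\ref{clm:qf2} shows it has no smallest element: for $m \in \Gamma_1$ with $m_i \ne 0$ there is $m' \in \Gamma_1$ with $m'_i = 0$, so $m \not\le m'$. Hence $\Gamma_1$ has at least two minimal elements (Claim~\ref{clm:qf4}), the minimal generating set~\eqref{eqn:mgs} of $\Omega_{X,x}^{[n-1]}$ has at least $n + 1$ elements, and Lemma~\ref{lem:stalk fct free} rules out freeness. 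If you want to keep your direct strategy instead, it can be salvaged without Poincar\'e series: the weight decomposition $\Omega_{X,x}^{[n-1]} \isom \bigoplus_{i=1}^n V_i$, where $V_i$ is the module of semi-invariants of weight $-(A - a_i)$, exhibits each $V_i$ as a direct summand of a free $\O_{X,x}$-module, hence projective, hence free of rank one; and for a small group the local class group of the quotient is isomorphic to the character group via $\chi \mapsto (\text{semi-invariants of weight } \chi)$, so $V_i$ principal forces $A - a_i \equiv 0$. That is where smallness would enter your argument --- but it requires the class-group fact, which the paper's counting argument avoids.
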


\begin{proof}
Let $(x \in X)$ be of type $\frac 1 r (a_1, \dots, a_n)$, for some positive integer $r$ and $a_1, \dots, a_n \in \Z/r$.
For the rest of the proof, we will discuss $(x \in X)$ in terms of the $\mu_r$-action on $\C^n$ thus defined. Furthermore, we will switch to algebraic language. That is, we let $R = \C[x_1, \dots, x_n]$ be the affine coordinate ring of $\C^n$ and we consider $X = \Spec R^{\mu_r}$, where $x \in X$ corresponds to the maximal ideal $(x_1, \dots, x_n) \cap R^{\mu_r}$.

As explained in Section~\ref{sec:quot sg}, we can make the following standard assumptions when dealing with quotient singularities.

\begin{awlog} \label{ass:qf1}
The action of $\mu_r$ on $\C^n$ is faithful. No element of $\mu_r$ acts as a quasi-reflection.
\end{awlog}

This assumption has the following consequence.

\begin{clm} \label{clm:qf2}
For any $1 \le i \le n$, we have $\langle a_1, \dots, \wh{a_i}, \dots, a_n \rangle = \Z/r$.
\end{clm}

Here, as usual, putting a hat on an element means omitting that element, and $\langle S \rangle$ is the subgroup generated by the subset $S$.

\begin{proof}[Proof of Claim~\ref{clm:qf2}]
For simplicity of notation, let $i = 1$. Assume that the subgroup of $\Z/r$ generated by $a_2, \dots, a_n$ is a proper subgroup, say of order $q < r$. If $\zeta \in \mu_r$ is a primitive $r$-th root of unity, then $\zeta^q \ne 1$ and $\zeta^q$ acts either as the identity or as a quasi-reflection, since $q \cdot a_j = 0$ for $j \ge 2$. This, however, contradicts Assumption~\ref{ass:qf1}.
\end{proof}

If $a_1 + \cdots + \wh{a_i} + \cdots + a_n = 0$ for any $i$, then clearly $a_1 = \cdots = a_n$. By Claim~\ref{clm:qf2}, we have $\langle a_1 \rangle = \Z/r$. Hence after pulling back the action by a suitable automorphism of $\mu_r$, we have $a_1 = 1$. Then $n - 1 = 0 \in \Z/r$, that is, $r$ divides $n - 1$, and we are done. So we may assume that $a_1 + \cdots + \wh{a_i} + \cdots + a_n \ne 0$ for some $i$. Re-indexing so that $i = 1$ gives us

\begin{awlog} \label{ass:qf3}
We have $a_2 + \cdots + a_n \ne 0$.
\end{awlog}

We will show that under these assumptions, $\Omega_X^{[n-1]}$ is not locally free at $x$, finishing the proof.
Consider the monoid $M = \N_0^n$. We think of $M$ as the set of monomials on $\C^n$, where $m = (m_1, \dots, m_n) \in M$ corresponds to $x^m := x_1^{m_1} \cdots x_n^{m_n}$.
We have a partial ordering ``$\le$'' on $M$, defined by setting $m \le m'$ if and only if $m_i \le m_i'$ for all indices $i$. Note that $m \le m'$ if and only if $x^{m'}$ is a multiple of $x^m$ in $R$.
 Define $\alpha\!: M \to \Z/r$ by
\[ \alpha(m_1, \dots, m_n) = \sum m_i a_i. \]
An element $\eps \in \mu_r$ acts on the monomial $x^m \in R$ by
\[ x^m \mapsto \eps^{\alpha(m)} x^m. \]
For $1 \le i \le n$, define
\[ \Gamma_i = \big\{ m \in M \;\big|\; \alpha(m) = -(a_1 + \cdots + \wh{a_i} + \cdots + a_n) \big\} \subset M, \]
a non-empty sub-semigroup. By Proposition~\ref{prp:refl quot}, the space of sections $\Hn(X, \Omega_X^{[n-1]})$ has a $\C$-vector space basis
\[ \left\{ x^m \cdot \dif x_1 \wedge \cdots \wedge \wh{\dif x_i} \wedge \cdots \wedge \dif x_n \;\middle|\; 1 \le i \le n, \; m \in \Gamma_i \right\}. \]
Hence as an $R^{\mu_r}$-module, $\Hn(X, \Omega_X^{[n-1]})$ is minimally generated by
\begin{sequation} \label{eqn:mgs}
\left\{ x^m \cdot \dif x_1 \wedge \cdots \wedge \wh{\dif x_i} \wedge \cdots \wedge \dif x_n \;\middle|\; 1 \le i \le n, \; m \in \Gamma_i \text{ minimal} \right\}.
\end{sequation}%
Localizing at $x$, we see that~\eqref{eqn:mgs} also is a minimal generating set for $\Omega_{X,x}^{[n-1]}$ as an $\O_{X,x}$-module.

\begin{clm} \label{clm:qf4}
The semigroup $\Gamma_1$ contains at least two distinct minimal elements.
\end{clm}

\begin{proof}[Proof of Claim~\ref{clm:qf4}]
If $\Gamma_1$ contained just one minimal element, then that element would be the smallest element. We will show that $\Gamma_1$ does not contain a smallest element. We have $0 \not\in \Gamma_1$ by Assumption~\ref{ass:qf3}. Let $m \in \Gamma_1$ be arbitrary. Then $m \ne 0$, say $m_i \ne 0$. By Claim~\ref{clm:qf2}, there exists $m' \in \Gamma_1$ such that $m_i' = 0$. Hence $m \not\le m'$, and $m$ is not the smallest element of $\Gamma_1$.
\end{proof}

It follows from Claim~\ref{clm:qf4} that the set~\eqref{eqn:mgs} contains at least $n + 1$ distinct elements. But the rank of $\Omega_X^{[n-1]}$ at a smooth point of $X$ is only $n$. By Lemma~\ref{lem:stalk fct free} below, the sheaf $\Omega_X^{[n-1]}$ cannot be free at $x$.
\end{proof}

\begin{proof}[Proof of Theorem~\ref{thm:gen LZ n-1}]
Let $(x \in X)$ be a normal $n$-dimensional singularity such that $\Omega_X^{[n-1]}$ is free. By Lemma~\ref{lem:det wedge}, $(n - 1)K_X$ is Cartier. Hence $K_X$ is \Q-Cartier. Let $\pi\!: \wt X \to X$ be the associated index one cover, cf.~\cite[Def.~5.19]{KM98}. The sheaf $\Omega_{\wt X}^{[n]}$ is free. Since $\pi$ is finite and surjective, we have a pull-back map
\[ \pi^*\!: \pi^* \Omega_X^{[n-1]} \to \Omega_{\wt X}^{[n-1]}. \]
Over the smooth locus of $X$, the map $\pi$ is \'etale and hence $\pi^*$ is an isomorphism there. Since both domain and codomain of $\pi^*$ are reflexive, we see that $\pi^*$ is an isomorphism. It follows that $\Omega_{\wt X}^{[n-1]}$ is free. By Lemma~\ref{lem:wedge pairing}, $\Omega_{\wt X}^{[1]}$ is free, hence so is $\sT_{\wt X} = \left( \Omega_{\wt X}^{[1]} \right)^*$.

In each of the cases (\ref{thm:gen LZ n-1}.\ref{itm:gen LZ n-1.1})--(\ref{thm:gen LZ n-1}.\ref{itm:gen LZ n-1.3}), it now follows that $\wt X$ is smooth:
In case (\ref{thm:gen LZ n-1}.\ref{itm:gen LZ n-1.1}), this is immediate from the Lipman--Zariski conjecture in dimension $n$.
In case (\ref{thm:gen LZ n-1}.\ref{itm:gen LZ n-1.2}), we note that the pair $(\wt X, \pi^* \Delta)$ is again log canonical by~\cite[Prop.~5.20]{KM98}. Then we apply~\cite[Cor.~1.3]{GK13}.
And in case (\ref{thm:gen LZ n-1}.\ref{itm:gen LZ n-1.3}), the codimension of the singular locus of $\wt X$ is again at least three because $\pi$ is \'etale over the smooth locus of $X$. We now apply~\cite[Corollary]{Fle88}.

Since $\pi\!: \wt X \to X$ is a quotient map with cyclic Galois group, $(x \in X)$ is now exhibited as a cyclic quotient singularity. By Proposition~\ref{prp:quot free}, $(x \in X)$ is of type $\frac 1 r (1, \dots, 1)$, where $r$ divides $n - 1$. As explained in~\cite[Sec.~4.1]{GR11}, such an $(x \in X)$ can also be described as the cone over the $r$-th Veronese embedding of $\P^{n-1}$.
Conversely, the argument from the proof of Proposition~\ref{prp:gen LZ MMP}.a) shows that for any $n$-dimensional cyclic quotient singularity $(x \in X)$ of type $\frac 1 r (1, \dots, 1)$, where $r$ divides $n - 1$, we have that $\Omega_X^{[n-1]}$ is free.

The last statement of Theorem~\ref{thm:gen LZ n-1} is an immediate consequence of the facts just proved. The singularities in question are terminal by~\cite[(4.11), Theorem]{Rei87}.
\end{proof}

\subsection{Proof of Corollary~\ref{cor:torus quot}}

We rely on the following result of~\cite{GKP13}, which we recall here for the reader's convenience.

\begin{thm}[Extension of flat sheaves] \label{thm:ext flat}
Let $X$ be a normal quasi-projective variety such that the pair $(X, \Delta)$ is klt for some $\R$-divisor $\Delta$ on $X$. Then there exists a normal variety $\wt X$ and a finite surjective Galois morphism $\gamma\!: \wt X \to X$, \'etale in codimension one, such that the following holds:
If $\sG^\circ$ is any flat, locally free, analytic sheaf on $\wt X_\sm^\an$, then there is a flat, locally free, algebraic sheaf $\sG$ on $\wt X$ such that $\sG^\circ$ is isomorphic to $\big( \sG|_{\wt X_\sm} \big)^\an$.
\end{thm}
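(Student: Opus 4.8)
The plan is to translate the statement into the language of the topological fundamental group via the Riemann--Hilbert correspondence, and then to read off the conclusion from the structure of a carefully chosen cover. A flat, locally free analytic sheaf on a connected complex manifold is the same datum as a finite-dimensional complex representation of its fundamental group; applying this to $\wt X_\sm^\an$, the sheaf $\sG^\circ$ corresponds to a representation $\rho\colon \pi_1(\wt X_\sm^\an) \to \GL(r, \C)$, while a flat, locally free algebraic sheaf on $\wt X$ corresponds, by the paper's definition of flatness, to a representation of $\pi_1(\wt X^\an)$. Writing $\iota\colon \wt X_\sm \inj \wt X$ for the inclusion, the extension problem becomes the assertion that $\rho$ factors through the map $\iota_*\colon \pi_1(\wt X_\sm^\an) \to \pi_1(\wt X^\an)$. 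Since the singular locus has codimension $\ge 2$, the map $\iota_*$ is surjective and its kernel is normally generated by the images of the \emph{local} fundamental groups $\pi_1^{\mr{loc}}(\wt X, z)$ of the singular points $z$. Thus it suffices to arrange that $\rho$ kills every local monodromy, i.e.\ that the local monodromy of every flat bundle on $\wt X_\sm$ is trivial.

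I would construct $\wt X$ as a \emph{maximally quasi-\'etale} cover of $X$: a finite, surjective, Galois morphism $\gamma\colon \wt X \to X$, \'etale in codimension one, with the property that every finite quasi-\'etale cover of $\wt X$ is in fact \'etale. Because $\gamma$ is \'etale in codimension one, the pair $(\wt X, \gamma^*\Delta)$ is again klt by~\cite[Prop.~5.20]{KM98}, so $\wt X$ inherits all the relevant local properties of $X$. The existence of such a cover is where the real work lies, and it is the main obstacle: one must know that the kernel of the map $\hat\pi_1(X_\sm) \to \hat\pi_1(X)$ on \'etale (profinite) fundamental groups is \emph{finite}, so that it can be trivialized by passing to the associated finite cover. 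This finiteness rests on the deep fact that the local fundamental groups of klt singularities are finite, which is established through the minimal model program and boundedness results and is by far the hardest input into the whole argument.

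Granting the cover, the remaining step is to show that on $\wt X$ every flat bundle has trivial local monodromy. Fix a singular point $z$ and a flat bundle with monodromy $\rho$. Since $\pi_1^{\mr{loc}}(\wt X, z)$ is finite, its image $\rho\big(\pi_1^{\mr{loc}}(\wt X, z)\big)$ is a finite subgroup of $\GL(r, \C)$. Suppose it were nontrivial. The group $\Gamma := \rho\big(\pi_1(\wt X_\sm^\an)\big)$ is a finitely generated linear group, hence residually finite by Mal'cev's theorem, so the nontrivial finite subgroup survives in some finite quotient $\Gamma \surj Q$. The composite $\pi_1(\wt X_\sm^\an) \surj Q$ defines a connected finite \'etale cover of $\wt X_\sm$, whose normalization over $\wt X$ is a finite quasi-\'etale cover $\wt X' \to \wt X$, its branch locus being contained in the singular locus of codimension $\ge 2$. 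By nontriviality of the local monodromy this cover is ramified over $z$, hence \emph{not} \'etale, contradicting maximal quasi-\'etaleness. Therefore all local monodromies are trivial, $\rho$ factors through $\iota_*$, and we obtain the desired representation $\tilde\rho\colon \pi_1(\wt X^\an) \to \GL(r, \C)$.

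Finally I would verify that the flat bundle produced by $\tilde\rho$ is algebraic and restricts to $\sG^\circ$. The representation $\tilde\rho$ defines an analytic locally free sheaf on $\wt X^\an$ whose restriction to $\wt X_\sm^\an$ is $\sG^\circ$ by construction, and triviality of the local monodromy guarantees that this analytic extension is genuinely locally free across the singular locus. To see that it underlies an algebraic sheaf, I would pass to a projective compactification, note that a flat bundle arising from a representation has regular singularities along the boundary, and invoke the algebraic Riemann--Hilbert correspondence of Deligne on the smooth locus; taking the reflexive, and by local freeness locally free, extension across the codimension $\ge 2$ singular locus then yields a locally free algebraic sheaf $\sG$ on $\wt X$ with $\sG^\an$ defined by $\tilde\rho$, as required. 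The bulk of the difficulty is concentrated in the existence of the maximally quasi-\'etale cover; once that structural result is in hand, the residual-finiteness argument and the algebraization are comparatively formal.
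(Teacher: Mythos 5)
The paper itself contains no proof of this statement: it is quoted from the literature, and the paper's entire proof reads ``See \cite[Thm.~1.13]{GKP13}''. Measured against the argument of that reference, your reconstruction is essentially faithful: $\wt X$ is indeed taken to be a maximally quasi-\'etale cover --- equivalently, one for which $\hat\pi_1(\wt X_\sm) \to \hat\pi_1(\wt X)$ is an isomorphism --- whose existence is the deep input resting on Xu's finiteness of \emph{\'etale} local fundamental groups of klt singularities; representations are then factored through $\pi_1(\wt X^\an)$ using Mal'cev's residual finiteness of finitely generated linear groups; and algebraicity comes from Deligne's Riemann--Hilbert correspondence on $\wt X_\sm$ followed by the reflexive (here locally free) extension across the codimension-two singular set.

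Where you deviate is in the factorization step, and your detour is riskier than necessary. Reducing to the triviality of local monodromies forces you to (i) know that $\ker(\iota_*)$ is normally generated by images of local fundamental groups --- true for normal complex spaces, but requiring a van Kampen argument that needs care when the singular locus is positive-dimensional --- and (ii) invoke finiteness of the \emph{topological} local fundamental group $\pi_1^{\mr{loc}}(\wt X, z)$, which is a much deeper and (at the time of the cited work) unavailable fact; Xu's theorem concerns only the profinite completion. Both inputs can be dropped: the defining property of the cover says exactly that every homomorphism from $\pi_1(\wt X_\sm^\an)$ to a \emph{finite} group factors through $\pi_1(\wt X^\an)$, so for any $g \in \ker(\iota_*)$ with $\rho(g) \ne 1$ a Mal'cev quotient of $\img(\rho)$ in which $\rho(g)$ survives yields an immediate contradiction, with no mention of local groups or of finiteness of local monodromy (your own contradiction argument in fact only ever uses a single element, so this is a one-line repair). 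Similarly, your stated mechanism for the existence of the cover --- finiteness of the kernel of $\hat\pi_1(X_\sm) \to \hat\pi_1(X)$ --- is not how the reference proves it (it shows that towers of quasi-\'etale covers stabilize, via Xu's theorem); since you explicitly black-box this step as the hard input, that inaccuracy does not affect the correctness of your outline. With these repairs, your argument is the argument of the cited source.
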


\begin{proof}
See~\cite[Thm.~1.13]{GKP13}.
\end{proof}

\begin{proof}[Proof of Corollary~\ref{cor:torus quot}]
Let $\gamma_1\!: X_1 \to X$ be a cover enjoying the properties of Theorem~\ref{thm:ext flat}.
By~\cite[Prop.~5.20]{KM98}, the pair $(X_1, \gamma_1^* \Delta)$ is klt. Set $X_1^\circ := \gamma_1^{-1} (X_\sm)$. The flat sheaf $\gamma_1^* \big( \Omega_{X_\sm}^{n-1} \big)$ on $X_1^\circ$ extends to a flat sheaf $\sF$ on all of $X_1$. Since $\gamma_1^* \big( \Omega_{X_\sm}^{n-1} \big) \isom \Omega_{X_1^\circ}^{n-1}$, we get $\sF \isom \Omega_{X_1}^{[n-1]}$ by reflexivity.
In particular, $\Omega_{X_1}^{[n-1]}$ is locally free. Applying Theorem~\ref{thm:gen LZ n-1} shows that $X_1$ has quotient singularities. Hence there exists a finite surjective Galois map $\gamma_2\!: X_2 \to X_1$, \'etale in codimension one, such that $X_2$ is smooth. Let $\gamma_3\!: X_3 \to X_2$ be a morphism such that $\gamma := \gamma_1 \circ \gamma_2 \circ \gamma_3$ is the Galois closure of $\gamma_1 \circ \gamma_2$, cf.~\cite[Thm.~B.1]{GKP13}. We obtain a diagram as follows.
\[ \xymatrix{
X_3 \ar[r]_{\gamma_3} \ar@/^1pc/[rrr]^\gamma & X_2 \ar[r]_{\gamma_2} & X_1 \ar[r]_{\gamma_1} & X
} \]
Since $\gamma_1 \circ \gamma_2$ is \'etale in codimension one, so is $\gamma_3$~\cite[(B.1.2)]{GKP13}. Because $X_2$ is smooth, $\gamma_3$ is \'etale by purity of branch locus. So $X_3$ is smooth, too. Thus the Galois morphism $\gamma\!: X_3 \to X$ exhibits $X$ as having quotient singularities.

For the second claim, we run the same argument as before, and we observe that Theorem~\ref{thm:gen LZ n-1} even gives us that $X_1$ has isolated terminal singularities. In particular, $X_1$ has klt singularities and it is smooth in codimension two. Since $\Omega_{X_1}^{[n-1]}$ is flat, the canonical divisor $K_{X_1}$ is numerically trivial. Furthermore, we have
\[ \sT_{X_1} \isom \Omega_{X_1}^{[n-1]} \tensor \omega_{X_1}^*, \]
so $c_2(\sT_{X_1}) = c_2 \big( \Omega_{X_1}^{[n-1]} \big) = 0$ in the sense of~\cite[Thm.~1.16]{GKP13}. Thus the assumptions of that theorem are satisfied and we may apply it to conclude that there is a finite surjective Galois morphism $A \to X_1$, \'etale in codimension one, where $A$ is an Abelian variety. If $\wt A \to A \to X$ is the Galois closure of the induced map $A \to X$, then by the same argument as above, $\wt A \to A$ is \'etale. Hence $\wt A$ is again an Abelian variety, and the claim is proven.
\end{proof}

\begin{rem} \label{rem:887}
It follows from recent work of Lu and Taji~\cite{LT14} that the second part of Corollary~\ref{cor:torus quot} holds with $\Omega_{X_\sm}^{n-1}$ replaced by $\Omega_{X_\sm}^p$, for any $1 \le p \le n - 1$.
One argues as follows: let $\gamma_1\!: X_1 \to X$ be the cover from Theorem~\ref{thm:ext flat}. Then $\Omega_{X_1}^{[p]}$ is locally free and flat. This implies that $K_{X_1}$ is numerically trivial. By elementary but tedious calculation,
\[ 0 = \wh c_2(\Omega_{X_1}^{[p]}) = - \binom{\binom{n-1}{p-1}}{2} \cdot \underbrace{\wh c_1^2(\sT_{X_1})}_{= 0} + \underbrace{\binom{n-2}{p-1}}_{\ne 0} \cdot \wh c_2(\sT_{X_1}), \]
so $\wh c_2(\sT_{X_1}) = 0$ as well. Here by $\wh c_i$ we denote the \Q-Chern classes, or orbifold Chern classes.
By~\cite[Thm.~1.2]{LT14}, we have that $X_1$ is a torus quotient. Taking Galois closure as above shows that also $X$ is a torus quotient, ending the proof.
\end{rem}

\section{The very weak generalized LZ problem}

It is well known that $\Omega_X^1$ being locally free characterizes smooth spaces.
Theorem~\ref{thm:very weak gen LZ}, which we will prove in the present section, asserts that the same is true of $\Omega_X^p$, with $p$ in a suitable range.

\begin{dfn}[Corank function of a sheaf]
Let $\sF$ be a coherent sheaf on the complex space $X$. We define the \emph{corank function} $\phi_\sF\!: X \to \Z$ of $\sF$ by
\[ \phi_\sF(x) = \dim_{\C} \factor{\sF_x}{\mf m_x \sF_x}, \]
where $\mf m_x \subset \O_{X,x}$ is the maximal ideal.
\end{dfn}

By Nakayama's lemma, $\phi_\sF(x)$ is the cardinality of any minimal generating set of the $\O_{X,x}$-module $\sF_x$.

From now on, assume additionally that $X$ is reduced and connected.

\begin{lem}[Corank function detects local freeness] \label{lem:stalk fct free}
The sheaf $\sF$ is locally free if and only if $\phi_\sF$ is constant.
\end{lem}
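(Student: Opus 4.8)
The plan is to prove both implications directly; the forward one is formal, and the reverse one is where all the content lies.

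For the ``only if'' direction, if $\sF$ is locally free then near any point $\sF \isom \O_X^r$, so $\sF_x/\mf m_x\sF_x \isom (\O_{X,x}/\mf m_x)^r \isom \C^r$ and $\phi_\sF \equiv r$ is locally constant; connectedness of $X$ upgrades this to constancy. For the converse I would fix $x_0 \in X$, set $r := \phi_\sF(x_0)$, and invoke Nakayama: lift a $\C$-basis of $\sF_{x_0}/\mf m_{x_0}\sF_{x_0}$ to germs $e_1, \dots, e_r \in \sF_{x_0}$, which then generate $\sF_{x_0}$ and, by coherence, generate $\sF$ over a neighborhood $U$ of $x_0$. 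This yields a short exact sequence $0 \to \sK \to \O_U^r \to \sF|_U \to 0$ with $\sK$ coherent, and the goal becomes showing $\sK = 0$ after shrinking $U$, which gives $\sF|_U \isom \O_U^r$.

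To control $\sK$, I would tensor the sequence with the residue field $\C(x) = \O_{X,x}/\mf m_x$ at an arbitrary $x \in U$. Right-exactness produces a surjection $\C(x)^r \surj \sF_x/\mf m_x\sF_x$; since the target has dimension $\phi_\sF(x) = r$ by the constancy hypothesis, this is a surjection between $r$-dimensional spaces, hence an isomorphism. Exactness then forces the image of $\sK_x/\mf m_x\sK_x$ in $\C(x)^r$ to vanish, which says precisely that $\sK_x \subseteq \mf m_x\,\O_{X,x}^r$ for every $x \in U$.

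The crux — and the only place the hypotheses on $X$ enter — is deducing $\sK = 0$ from this fiberwise vanishing. Represent a finite generating set of $\sK_{x_0}$ by sections $s^{(1)}, \dots, s^{(m)}$ of $\sK \subseteq \O_X^r$ over $U$ (shrinking $U$ so they generate $\sK|_U$ by coherence). Each component of each $s^{(j)}$ is a holomorphic function on $U$ whose germ lies in $\mf m_x$ at every $x \in U$, i.e.\ it vanishes at every point of $U$. Because $X$ is \emph{reduced}, a holomorphic function vanishing at all points of an open set is identically zero; hence every $s^{(j)} = 0$, so $\sK|_U = 0$ and $\sF|_U \isom \O_U^r$. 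This step is exactly where reducedness is indispensable: over a non-reduced base (a fat point, say) a nonzero nilpotent section vanishes at all points, so both the argument and the statement break down. I expect this to be the main — indeed the only nontrivial — obstacle, with the preceding dimension count and Nakayama reduction being routine.
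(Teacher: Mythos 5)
Your proof is correct. For comparison: the paper does not argue this lemma at all --- its ``proof'' is a one-line citation to B\u{a}nic\u{a}--St\u{a}n\u{a}\c{s}il\u{a} \cite[Ch.~III, Lemma~1.6.i)]{BS76} --- so what you have done is supply, in full, the standard argument that the cited reference encapsulates. Your chain of reasoning is exactly the classical one: Nakayama plus coherence to get a local presentation $0 \to \sK \to \O_U^r \to \sF|_U \to 0$, the fiberwise dimension count to force $\sK_x \subseteq \mf m_x \O_{X,x}^r$ at every point, and then reducedness to conclude that sections of $\O_X$ vanishing pointwise vanish identically, killing $\sK$. You also correctly isolate the role of the two standing hypotheses the paper imposes just before the lemma: reducedness is what makes the last step (and indeed the statement --- your fat-point example is the right one) work, while connectedness is only needed to pass from locally constant to constant in the easy direction. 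What your version buys is self-containedness; what the paper's version buys is brevity. The one small thing worth noting is that on a general complex space the implication ``a holomorphic function vanishing at every point of $U$ is zero on $U$'' is not purely formal from the absence of nilpotents in the stalks --- it rests on the R\"uckert Nullstellensatz for reduced complex spaces --- so if you wanted your write-up to be airtight you should cite that fact rather than treat it as the definition of reduced.
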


\begin{proof}
See~\cite[Ch.~III, Lemma~1.6.i)]{BS76}.
\end{proof}

\begin{lem}[Corank functions of exterior powers] \label{lem:stalk fct wedge}
For any $r \in \N$ and $x \in X$,
\[ \phi_{\bigwedge^r \sF}(x) = \binom{\phi_\sF(x)}{r}. \]
\end{lem}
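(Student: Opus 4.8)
The plan is to reduce the statement to a computation over the local ring $R := \O_{X,x}$ and then invoke the fact that exterior powers commute with base change. Write $M := \sF_x$, let $\mf m := \mf m_x \subset R$ be the maximal ideal, and set $k := R/\mf m = \C$. Since forming the $r$-th exterior power commutes with localization, and hence with passing to stalks, we have $(\bigwedge^r \sF)_x = \bigwedge\nolimits_R^r M$. By the definition of the corank function, the claim is therefore equivalent to the purely algebraic assertion
\[ \dim_k \left( \bigwedge\nolimits_R^r M \tensor_R k \right) = \binom{\dim_k (M \tensor_R k)}{r}. \]

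The key step is the base-change isomorphism
\[ \bigwedge\nolimits_R^r M \tensor_R k \isom \bigwedge\nolimits_k^r \left( M \tensor_R k \right). \]
I would justify this from the construction of the exterior algebra as a quotient of the tensor algebra: both the tensor power $M^{\tensor r}$ and the defining relations are preserved under the right-exact functor $(-) \tensor_R k$, so the quotient is preserved as well. Granting this, the right-hand side is the $r$-th exterior power of a finite-dimensional $k$-vector space $V := M \tensor_R k$, whose dimension is the elementary count $\binom{\dim_k V}{r}$; since $\dim_k V = \phi_\sF(x)$ by definition, the formula follows.

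If one prefers to avoid the general base-change statement, the same result can be obtained concretely. Choose elements $e_1, \dots, e_N \in M$ whose residues form a $k$-basis of $M \tensor_R k$, so that $N = \phi_\sF(x)$ and, by Nakayama, the $e_i$ minimally generate $M$. Then the products $e_{i_1} \wedge \cdots \wedge e_{i_r}$ with $i_1 < \cdots < i_r$ generate $\bigwedge_R^r M$, and their images in $\bigwedge_R^r M \tensor_R k$ correspond under the isomorphism above to the wedges of the basis vectors of $V$, which are linearly independent. Hence these $\binom{N}{r}$ elements form a minimal generating set of $\bigwedge_R^r M$, giving $\phi_{\bigwedge^r \sF}(x) = \binom{N}{r}$ by Nakayama once more.

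The only point requiring genuine care is the base-change isomorphism; everything else is either the definition of $\phi$ or standard linear algebra. In particular, I would emphasize that no flatness or local-freeness hypothesis on $\sF$ is needed, precisely because exterior powers are built from tensor powers by a right-exact quotient operation, which is exactly what makes the construction compatible with the (possibly non-flat) base change $R \to k$.
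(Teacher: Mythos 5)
Your proposal is correct and takes essentially the same route as the paper: the paper likewise reduces to the stalk $\sF_x$ over $\O_{X,x}$, invokes the base-change isomorphism $i^*\left( \bigwedge\nolimits^r \sF_x \right) \isom \bigwedge\nolimits^r (i^* \sF_x)$ for $i\colon \Spec \C \to \Spec \O_{X,x}$ (citing Hartshorne, Ch.~II, Ex.~5.16(e)), and concludes by counting dimensions of $\C$-vector spaces. The only difference is that you prove the base-change fact directly from the tensor-algebra construction (and add a Nakayama-style reformulation) instead of citing it, which is a harmless elaboration of the same idea.
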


\begin{proof}
Let $x \in X$ be arbitrary, and let $i\!: \Spec \C \to \Spec \O_{X,x}$ be the natural map.
Consider $\sF_x$ as a coherent sheaf on $\Spec \O_{X,x}$.
By~\cite[Ch.~II, Ex.~5.16.e)]{Har77},
\[ i^*\left( \bigwedge\nolimits^r \sF_x \right) \isom \bigwedge\nolimits^r (i^* \sF_x). \]
The claim follows by looking at the dimensions of these $\C$-vector spaces.
\end{proof}

\begin{prp}[Locally free exterior powers] \label{prp:loc free wedge}
If $\bigwedge^r \sF$ is locally free for some $r \le \sup \phi_\sF(X)$, then $\sF$ itself is locally free.
\end{prp}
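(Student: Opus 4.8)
The plan is to translate the statement entirely into a question about the corank function $\phi_\sF$ and to exploit the two preceding lemmas. First I would apply Lemma~\ref{lem:stalk fct free} to $\bigwedge^r \sF$: since this sheaf is locally free by hypothesis, its corank function $\phi_{\bigwedge^r \sF}$ is constant, say with value $v$. Combining this with Lemma~\ref{lem:stalk fct wedge} yields
\[ \binom{\phi_\sF(x)}{r} = v \qquad \text{for all } x \in X. \]
The goal is then to deduce that $\phi_\sF$ itself is constant, for a second application of Lemma~\ref{lem:stalk fct free} finishes the proof.

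The key point is to analyze the integer-valued function $k \mapsto \binom{k}{r}$ on $\{ k \in \Z : k \ge 0 \}$. Recall that $r \in \N$, so $r \ge 1$. This function vanishes for $0 \le k \le r - 1$ and is strictly increasing, hence injective, on $\{ k \ge r \}$. The hypothesis $r \le \sup \phi_\sF(X)$ enters exactly here, to guarantee that $v \ne 0$: indeed, if $v = 0$, then $\binom{\phi_\sF(x)}{r} = 0$ would force $\phi_\sF(x) \le r - 1$ for every $x$, whence $\sup \phi_\sF(X) \le r - 1 < r$, contradicting the hypothesis. Therefore $v \ge 1$, and consequently $\phi_\sF(x) \ge r$ for all $x$. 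On this range the binomial coefficient is injective in $k$, so the constant value $v$ pins down a single admissible value for $\phi_\sF(x)$; that is, $\phi_\sF$ is constant, and we conclude.

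I expect the main, and essentially only, obstacle to be conceptual rather than computational: recognizing that $k \mapsto \binom{k}{r}$ is \emph{not} injective on all of $\Z_{\ge 0}$, and that the hypothesis $r \le \sup \phi_\sF(X)$ is precisely what excludes the degenerate constant value $v = 0$. Without that hypothesis the statement would be false: if $r > \sup \phi_\sF(X)$, then $\binom{\phi_\sF(x)}{r} = 0$ for every $x$, so $\bigwedge^r \sF = 0$ is trivially locally free by Lemma~\ref{lem:stalk fct free}, while $\sF$ itself need not be — for instance, the maximal ideal sheaf at a point of $\C^2$ is not locally free, yet its third exterior power vanishes. Once the case $v = 0$ is ruled out, the remainder is the one-line monotonicity observation above together with the two lemmas.
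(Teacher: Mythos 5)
Your proof is correct and takes essentially the same route as the paper's: both apply Lemma~\ref{lem:stalk fct free} and Lemma~\ref{lem:stalk fct wedge} to translate the hypothesis into constancy of $x \mapsto \binom{\phi_\sF(x)}{r}$, use the assumption $r \le \sup \phi_\sF(X)$ precisely to rule out the degenerate constant value $0$, and conclude via the injectivity of $k \mapsto \binom{k}{r}$ on $\{ r, r+1, \dots \}$. The only cosmetic difference is that you exclude the value $0$ by contradiction, whereas the paper evaluates directly at a point with $\phi_\sF(x) \ge r$ to see that the constant is nonzero.
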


\begin{proof}
By looking at some $x \in X$ with $\phi_\sF(x) \ge r$ and using Lemma~\ref{lem:stalk fct wedge}, we see that $\phi_{\bigwedge^r \sF}$ is constant and nonzero.
Hence $\phi_\sF \ge r$ everywhere. However, since
\[ \binom{n+1}{r} = \binom{n}{r} + \binom{n}{r-1} > \binom{n}{r} \]
for $n \ge r$, the map $n \mapsto \binom n r$ is injective on $\{ r, r+1, \dots \}$.
It follows that $\phi_\sF$ is constant.
Now by Lemma~\ref{lem:stalk fct free}, $\sF$ is locally free.
\end{proof}

\begin{proof}[Proof of Theorem~\ref{thm:very weak gen LZ}]
Let $(x \in X)$ be a singularity such that $\Omega_X^p$ is free for some $1 \le p \le e$, where $e = \dim_{\C} \mf m_x / \mf m_x^2$ is the embedding dimension of $(x \in X)$. Then $e = \max \phi_{\Omega_X^1}(X)$
by~\cite[Kap.~III, \S 4.4, Folgerung]{GR71}.
So by Proposition~\ref{prp:loc free wedge}, $\Omega_X^1$ is locally free.
Now~\cite[Kap.~III, \S 4, Satz~7]{GR71} applies to show that $(x \in X)$ is smooth.
\end{proof}

\section{The sheaves of K\"ahler differentials on a hypersurface}

The goal of this section is to prove Theorem~\ref{thm:co-tor h}.
We use the notation from that theorem.
In particular, $X = \{ f = 0 \} \subset Y$ denotes a normal hypersurface in an open set $Y \subset \C^{n+1}$.
If $i\!: X \inj Y$ is the inclusion, we have pull-back maps of K\"ahler differentials
\[ i^*\!: \Omega_Y^p|_X \to \Omega_X^p, \]
for any positive integer $p$.

Whenever we write something like ``let $s \in \Hn(U, \sF)$ be a section'', where $\sF$ is a sheaf on a space $Z$, it is understood that $U$ denotes an arbitrary open subset of $Z$.

\begin{lem}[Logarithmic poles] \label{lem:log poles}
Let $\sigma \in \Hn(U, \Omega_Y^p)$ be a differential form. Then $f^{-1} \cdot \sigma \in \Hn(U, \Omega_Y^{[p]}(\log X))$ if and only if $\d f \wedge \sigma \in \ker \big( \Omega_Y^{p+1} \to \Omega_Y^{p+1}|_X \big)$.
\end{lem}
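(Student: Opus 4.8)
The claim is a local criterion comparing two conditions on a holomorphic $p$-form $\sigma$ on an open set $U \subset Y$: that $f^{-1}\sigma$ extends to a reflexive logarithmic form, i.e. lies in $\Hn(U, \Omega_Y^{[p]}(\log X))$, versus the algebraic condition that $\d f \wedge \sigma$ vanishes when restricted to $X$. The plan is to prove this by reducing to the smooth locus, where the residue/logarithmic-pole formalism is classical, and then invoking reflexivity to transfer the conclusion back to all of $U$.

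**The main reduction.** First I would recall that $\Omega_Y^{[p]}(\log X) = i_*\Omega_V^p(\log X|_V)$, where $V = U \setminus \Sing X$ is the complement of the singular locus (codimension $\ge 2$ in $X$, hence the pushforward recovers the reflexive hull). Therefore the condition $f^{-1}\sigma \in \Hn(U, \Omega_Y^{[p]}(\log X))$ is \emph{equivalent} to $f^{-1}\sigma|_V \in \Hn(V, \Omega_V^p(\log X|_V))$, because a section of a reflexive sheaf over $U$ is determined by its restriction to the complement of a codimension-$\ge 2$ set. On the smooth locus $V$, where $X|_V$ is a smooth divisor, I would use the standard local description of logarithmic forms: choosing local coordinates in which $f$ is (up to a unit) a coordinate function $y_0$, a form $f^{-1}\sigma$ has at worst a logarithmic pole along $\{f=0\}$ precisely when $\d f \wedge \sigma \in (f)\cdot\Omega_Y^{p+1}$ locally, which is exactly the statement that $\d f \wedge \sigma$ maps to zero in $\Omega_Y^{p+1}|_X = \Omega_Y^{p+1}/(f)\Omega_Y^{p+1}$. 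This is the content of the classical criterion (the pole order of $f^{-1}\sigma$ is at most one iff $\d f \wedge (f^{-1}\sigma)$ is holomorphic, i.e. $f^{-1}\,\d f \wedge \sigma$ is holomorphic, i.e. $\d f \wedge \sigma$ is divisible by $f$).

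**Transferring the kernel condition.** The remaining point is that the condition $\d f \wedge \sigma \in \ker(\Omega_Y^{p+1} \to \Omega_Y^{p+1}|_X)$ stated over all of $U$ is equivalent to its restriction over $V$. Since $\Omega_Y^{p+1}$ is locally free on the smooth space $Y$ and multiplication by $f$ realizes $\ker(\Omega_Y^{p+1} \to \Omega_Y^{p+1}|_X)$ as $f\cdot\Omega_Y^{p+1}$, the divisibility $\d f \wedge \sigma = f\cdot\tau$ for some holomorphic $\tau$ on $U$ follows from the same divisibility on $V$: the quotient $\tau = (\d f \wedge \sigma)/f$ is holomorphic on $V$, and being a section of the locally free (hence reflexive) sheaf $\Omega_Y^{p+1}$ over $U$ that extends across a codimension-$\ge 2$ analytic set, it extends to a holomorphic section on all of $U$ by the second Riemann extension theorem (Hartogs-type extension). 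Thus both sides of the claimed equivalence are detected on $V$, and the equivalence on $V$ is the classical smooth-divisor statement.

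**Expected obstacle.** The only genuinely delicate point is bookkeeping about where each condition is checked and ensuring that the codimension-$\ge 2$ extension arguments apply on both sides simultaneously; here the normality of $X$ (guaranteeing $\codim_X \Sing X \ge 2$, so $\codim_Y \Sing X \ge 3$) is what makes the reflexive pushforward and the Riemann extension go through. The smooth-divisor computation itself is routine, so the proof is essentially a matter of packaging the classical logarithmic-pole criterion together with the reflexivity of $\Omega_Y^{[p]}(\log X)$.
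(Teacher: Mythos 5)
Your proof is correct, but it takes a genuinely different route from the paper's. The paper never descends to the smooth locus: it takes as its starting point the characterization (quoted there as the definition, cf.\ the reference in Section~\ref{sec:res map}) that a meromorphic $p$-form $\omega$ with poles along $X$ is a section of $\Omega_Y^{[p]}(\log X)$ if and only if both $\omega$ and $\d\omega$ have at most a simple pole along $X$. Since the simple-pole condition is automatic for $\omega = f^{-1}\sigma$, the whole lemma then reduces to the single identity $f \cdot \d(f^{-1}\sigma) = -f^{-1}\,\d f \wedge \sigma + \d\sigma$, which is holomorphic precisely when $\d f \wedge \sigma$ is divisible by $f$. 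You instead take the pushforward definition of $\Omega_Y^{[p]}(\log X)$ literally, verify the criterion in adapted coordinates at smooth points of $X$ (the same divisibility computation), and then transfer $f \mid \d f \wedge \sigma$ back across the singular locus of $X$ via the second Riemann extension theorem, using that $\ker\big(\Omega_Y^{p+1} \to \Omega_Y^{p+1}|_X\big) = f\,\Omega_Y^{p+1}$ and that the singular locus has codimension at least $2$ in $Y$. Both arguments are sound: yours is more self-contained, since it needs only the classical smooth-divisor theory of logarithmic forms plus Riemann extension, and it makes the codimension bookkeeping explicit; the paper's is shorter because the pole-order characterization of the reflexive log sheaf already works across the singular locus, so no restriction-and-extension step is needed. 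One wording slip you should repair: your parenthetical ``the pole order of $f^{-1}\sigma$ is at most one iff $\d f \wedge (f^{-1}\sigma)$ is holomorphic'' is false as stated --- the simple pole is automatic here and does not imply the second condition; the correct criterion is that $f^{-1}\sigma$ is \emph{logarithmic} if and only if, in addition to having at most a simple pole, $\d f \wedge (f^{-1}\sigma)$ is holomorphic. Since the equivalence you actually apply is this correct one, nothing downstream in your argument is affected.
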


\begin{proof}
By definition, $f^{-1} \sigma \in \Hn(U, \Omega_Y^{[p]}(\log X))$ if and only if the meromorphic differential forms $f^{-1} \sigma$ and $\d(f^{-1} \sigma)$ both have at most a simple pole along $X$. This is clearly true of $f^{-1} \sigma$, so the condition is that
\[ f \cdot \d(f^{-1} \sigma) = -f^{-1} \d f \wedge \sigma + \d\sigma \]
be a holomorphic differential form. This is the case if and only if $\d f \wedge \sigma$ is a multiple of $f$, i.e.~if the image of $\d f \wedge \sigma$ in $\Omega_Y^{p+1}|_X$ is zero.
\end{proof}

\begin{lem}[Factorization] \label{lem:factorization}
Let $R$ be a ring, $I \subset R$ an ideal, and $\phi\!: M \to N$ a map of $R$-modules. Furthermore, let $P$ be an $R/I$-module and $\wt\rho\!: \phi^{-1}(IN) \to P$ an $R$-linear map.
If $IM \subset \ker(\wt\rho)$, then $\wt\rho$ factorizes via a map
\[ \rho\!: \ker(\overline\phi) \to P, \]
where $\overline\phi = \phi \tensor \id\!: M \tensor_R R/I \to N \tensor_R R/I$.
\end{lem}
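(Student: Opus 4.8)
The plan is to reduce the assertion to the universal property of a quotient module, after making the kernel $\ker(\overline\phi)$ explicit. First I would unwind the definition of $\overline\phi$. Tensoring with $R/I$ identifies $M \tensor_R R/I = M/IM$ and $N \tensor_R R/I = N/IN$, and under these identifications $\overline\phi$ is the map induced by $\phi$. Thus a class $m + IM$ lies in $\ker(\overline\phi)$ exactly when $\phi(m) \in IN$, that is, when $m \in \phi^{-1}(IN)$. Since $\phi(IM) = I\,\phi(M) \subset IN$ we have $IM \subset \phi^{-1}(IN)$, and so the composite $q\!: \phi^{-1}(IN) \inj M \surj M/IM$ has image contained in $\ker(\overline\phi)$. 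A short check shows $q$ is in fact surjective onto $\ker(\overline\phi)$ with kernel $\phi^{-1}(IN) \cap IM = IM$, giving a canonical identification
\[ \ker(\overline\phi) = \phi^{-1}(IN)/IM. \]

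With this description in place, the factorization is formal. By hypothesis $IM \subset \ker(\wt\rho)$, so $\wt\rho$ annihilates $\ker(q)$. The fundamental homomorphism theorem then produces a unique $R$-linear map $\rho\!: \ker(\overline\phi) \to P$ satisfying $\wt\rho = \rho \circ q$; this is precisely the desired factorization. As both source and target of $\rho$ are annihilated by $I$, the map $\rho$ is automatically $R/I$-linear, so nothing is lost by regarding it as a morphism of $R/I$-modules.

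I expect no genuine obstacle here, as the argument is purely diagram-theoretic. The only step demanding a moment of care is the identification of $\ker(\overline\phi)$ with $\phi^{-1}(IN)/IM$ --- concretely, verifying that passage to $M/IM$ converts the set-theoretic preimage $\phi^{-1}(IN)$ into the kernel of the induced map, and that its intersection with $IM$ is all of $IM$. Once this is settled, the remainder is the standard factoring of a homomorphism that kills a submodule through the associated quotient.
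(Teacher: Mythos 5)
Your proof is correct and takes essentially the same route as the paper: the paper encodes your identification $\ker(\overline\phi) \cong \phi^{-1}(IN)/IM$ as the short exact sequence $0 \to IM \to \phi^{-1}(IN) \to \ker(\overline\phi) \to 0$ and then, exactly as you do, invokes the universal property of quotient modules using the hypothesis $IM \subset \ker(\wt\rho)$. Your write-up merely spells out the verification of that exact sequence, which the paper leaves implicit.
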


\begin{proof}
Note that $M \tensor_R R/I \isom M/IM$, and likewise for $N$. Hence we have a short exact sequence
\begin{sequation} \label{eqn:factorization}
0 \to IM \to \phi^{-1}(IN) \to \ker(\overline\phi) \to 0.
\end{sequation}%
Since we assumed that $IM \subset \ker(\wt\rho)$, $\wt\rho$ induces the desired map $\rho$ by the universal property of quotient modules and the sequence~\eqref{eqn:factorization}.
\end{proof}

\begin{prp}[Residue map] \label{prp:rho}
For any $1 \le p \le n$, we have a map of sheaves on $X$,
\[ \rho_{p+1}\!: \ker \alpha^{p+1} \to \Omega_X^{[p]}, \]
given by $\sigma \mapsto \res_X(f^{-1} \sigma)$.
\end{prp}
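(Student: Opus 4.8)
The plan is to obtain $\rho_{p+1}$ as the map produced by the Factorization Lemma~\ref{lem:factorization}, which is engineered precisely for this bookkeeping. The subtle points are that a section $\sigma$ of $\ker\alpha^{p+1}\subset\Omega_Y^{p+1}|_X$ lives only on $X$, so $f^{-1}\sigma$ is meaningless until one chooses a lift $\wt\sigma\in\Omega_Y^{p+1}$; that $f^{-1}\wt\sigma$ must have at worst a logarithmic pole for its residue to be defined; and that the resulting residue must not depend on the chosen lift $\wt\sigma$. All three issues will be handled simultaneously by the lemma.

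First I would feed Lemma~\ref{lem:factorization} the data $R=\O_Y$, $I=(f)$, $M=\Omega_Y^{p+1}$, $N=\Omega_Y^{p+2}$, and $\phi\colon M\to N$, $\phi(\tau)=\d f\wedge\tau$, working locally on $Y$ (the proof of the lemma is purely formal, so it applies verbatim to these sheaves of $\O_Y$-modules). Because $\Omega_Y^q|_X=\Omega_Y^q\otimes_{\O_Y}\O_X$ with $\O_X=\O_Y/(f)$, the reduced map $\overline\phi=\phi\otimes\id$ is exactly $\alpha^{p+1}$; thus $\ker\overline\phi=\ker\alpha^{p+1}$ is the desired domain.

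Next I would analyse the intermediate module $\phi^{-1}(IN)=\{\wt\sigma\in\Omega_Y^{p+1}:\d f\wedge\wt\sigma\in f\,\Omega_Y^{p+2}\}$. By Lemma~\ref{lem:log poles} (applied with $p$ replaced by $p+1$), this is exactly the subsheaf of those $\wt\sigma$ for which $f^{-1}\wt\sigma$ lies in $\Omega_Y^{[p+1]}(\log X)$. Consequently $\wt\rho(\wt\sigma):=\res_X(f^{-1}\wt\sigma)$ defines an $\O_Y$-linear map $\phi^{-1}(IN)\to\Omega_X^{[p]}$, since both $\wt\sigma\mapsto f^{-1}\wt\sigma$ and $\res_X$ are $\O_Y$-linear. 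To invoke the lemma I then verify its hypothesis $IM\subset\ker\wt\rho$: if $\wt\sigma=f\eta$ with $\eta\in\Omega_Y^{p+1}$, then $f^{-1}\wt\sigma=\eta$ is genuinely holomorphic and hence has vanishing residue. The lemma now delivers the factorization $\rho_{p+1}\colon\ker\alpha^{p+1}\to\Omega_X^{[p]}$, which by its construction sends $\sigma$ to $\res_X(f^{-1}\wt\sigma)$ for any lift $\wt\sigma$, which is precisely the stated formula.

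The only step with genuine content is the identification of $\phi^{-1}(IN)$ with the logarithmic forms via Lemma~\ref{lem:log poles}; everything else is the formal packaging supplied by the Factorization Lemma, whose role is exactly to guarantee independence of the chosen lift. The main thing to watch is the index shift among $p$, $p+1$, and $p+2$, together with the boundary case $p=n$: there $N=\Omega_Y^{n+2}=0$, so $\phi^{-1}(IN)=\Omega_Y^{n+1}$ and every top form becomes logarithmic after dividing by $f$, and the construction degenerates without incident.
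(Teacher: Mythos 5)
Your proof is correct and follows essentially the same route as the paper: both apply Lemma~\ref{lem:factorization} with the data $R$, $I=(f)$, $\phi=\d f\wedge(-)$, use Lemma~\ref{lem:log poles} to see that $\wt\rho=\res_X(f^{-1}\cdot\,)$ is defined on $\phi^{-1}(IN)$, and check $IM\subset\ker\wt\rho$ (the paper calls this ``straightforward''; you supply the one-line reason, namely that holomorphic forms have vanishing residue). The only difference is cosmetic: the paper applies the lemma stalkwise at each $x\in X$ and then glues, whereas you apply it directly to sheaves of $\O_Y$-modules, which is equally valid since exactness and the factorization can be checked on stalks.
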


\begin{proof}
We would like to apply Lemma~\ref{lem:factorization}. Let $x \in X$ be any point, and set
\begin{align*}
R    &= \O_{Y,x}, \\ 
I    &= (f) \subset R, \\
\phi &= (\sigma \mapsto \d f \wedge \sigma)\!: M = \Omega_{Y,x}^{p+1} \to N = \Omega_{Y,x}^{p+2}, \\
P    &= \Omega_{X,x}^{[p]}, \\
\wt\rho &= \big( \sigma \mapsto \res_X(f^{-1} \sigma) \big)\!: \phi^{-1}(IN) \to P,
\end{align*}
where $\res_X\!: \Omega_{Y,x}^{[p+1]}(\log X) \to \Omega_{X,x}^{[p]}$ is the residue map, cf.~Section~\ref{sec:res map}.
It follows from Lemma~\ref{lem:log poles} that $\wt\rho$ exists. Furthermore, it is straightforward to see that $IM \subset \ker(\wt\rho)$.
Hence we obtain the desired map $\rho_{p+1}$ stalkwise by applying Lemma~\ref{lem:factorization}.
It is clear from the construction that the maps of stalks glue to a map of sheaves.
\end{proof}

\begin{prp}[$K$ computes the cotorsion] \label{prp:cotor}
The map $\rho_{p+1}$ from Proposition~\ref{prp:rho} is an isomorphism, and it induces an isomorphism of short exact sequences
\[ \xymatrix{
0 \ar[r] & \img \alpha^p \ar[d]^\wr \ar[r] & \ker \alpha^{p+1} \ar[d]^\wr_{\rho_{p+1}} \ar[r] & H^{p+1}(K) \ar[d]^\wr \ar[r] & 0 \\
0 \ar[r] & \Omt_X^p \ar[r] & \Omega_X^{[p]} \ar[r] & \cotor \Omega_X^p \ar[r] & 0.
} \]
\end{prp}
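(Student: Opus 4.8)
The plan is to recognize both rows as short exact sequences that hold essentially by definition, to prove that the middle vertical map $\rho_{p+1}$ is an isomorphism, to check that it carries the left-hand subsheaf $\img\alpha^p$ onto $\Omt_X^p$, and finally to invoke the five lemma to deduce that the induced map on the quotients is an isomorphism. The top row $0 \to \img\alpha^p \to \ker\alpha^{p+1} \to H^{p+1}(K) \to 0$ is exact by the very definition of $H^{p+1}(K)$, and the bottom row $0 \to \Omt_X^p \to \Omega_X^{[p]} \to \cotor\Omega_X^p \to 0$ is the standard four-term sequence from Section~\ref{sec:refl}, using $(\Omega_X^p)^{**} = \Omega_X^{[p]}$ and the fact that $\Omt_X^p$ is the image of the natural map $\Omega_X^p \to \Omega_X^{[p]}$.

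The key step is to show that $\rho_{p+1}$ is an isomorphism. Since $Y$ is smooth, the sheaves $\Omega_Y^{p+1}|_X$ and $\Omega_Y^{p+2}|_X$ are locally free, so $\img\alpha^{p+1}$ is torsion-free; consequently $\ker\alpha^{p+1}$, being a subsheaf of a reflexive sheaf with torsion-free quotient, is reflexive (cf.~\cite[Sec.~3]{Gra12}). As $\Omega_X^{[p]}$ is reflexive by construction and $X$ is normal, so that $\sing X$ has codimension at least two, it suffices to check that $\rho_{p+1}$ is an isomorphism over the smooth locus $X_\sm$, where $\Omega_X^{[p]}|_{X_\sm} = \Omega_{X_\sm}^p$. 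There I would compute in local coordinates $y_1, \dots, y_{n+1}$ adapted so that $f = y_{n+1}$: one finds that $\sigma \in \ker\alpha^{p+1}$ forces $\sigma = \d y_{n+1} \wedge a$ with $a$ a $p$-form not involving $\d y_{n+1}$, and that the residue of $f^{-1}(\d y_{n+1}\wedge\wt a) = \tfrac{\d y_{n+1}}{y_{n+1}}\wedge\wt a$ equals $a|_{X_\sm}$. Hence $\rho_{p+1}$ is the evident bijection onto $\Omega_{X_\sm}^p$. A morphism of reflexive sheaves that is an isomorphism away from a set of codimension at least two is an isomorphism, so $\rho_{p+1}$ is an isomorphism globally.

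It remains to identify $\rho_{p+1}(\img\alpha^p)$. For $\tau$ a local section of $\Omega_Y^p|_X$ with lift $\wt\tau$, the element $\alpha^p(\tau) = \d f\wedge\tau$ yields $f^{-1}(\d f\wedge\wt\tau) = \tfrac{\d f}{f}\wedge\wt\tau$, whose residue is exactly the image $i^*\tau$ inside $\Omega_X^{[p]}$. Thus $\rho_{p+1}\circ\alpha^p$ equals the composite of $i^*\!: \Omega_Y^p|_X \to \Omega_X^p$ with the natural map $\Omega_X^p \to \Omega_X^{[p]}$. Since $i^*$ is surjective, $\rho_{p+1}(\img\alpha^p)$ is precisely the image of $\Omega_X^p$ in $\Omega_X^{[p]}$, namely $\Omt_X^p$. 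Therefore the left square commutes and its two vertical arrows are isomorphisms; by the five lemma the induced map $H^{p+1}(K) \to \cotor\Omega_X^p$ is an isomorphism as well.

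I expect the main obstacle to be the reflexivity of $\ker\alpha^{p+1}$ combined with the reduction to the smooth locus: once these structural facts are in hand, the two residue computations (over $X_\sm$ for injectivity/surjectivity, and along $\img\alpha^p$ for the image identification) are routine.
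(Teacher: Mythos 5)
Your proposal is correct and takes essentially the same route as the paper's own proof: reflexivity of $\ker \alpha^{p+1}$ via its torsion-free quotient, reduction to the smooth locus where a coordinate computation with $f$ as a coordinate shows $\rho_{p+1}$ maps a basis to a basis, the residue identity $\rho_{p+1}(\alpha^p(\sigma)) = i^*\sigma$ to identify $\img \alpha^p$ with $\Omt_X^p$, and the five lemma to conclude for $H^{p+1}(K) \to \cotor \Omega_X^p$. There are no gaps.
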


\begin{proof}
The inclusion $\ker \alpha^{p+1} \subset \Omega_Y^{p+1}|_X$ exhibits $\ker \alpha^{p+1}$ as a saturated subsheaf of a reflexive (even free) sheaf, because the quotient injects into the torsion-free (even free) sheaf $\Omega_Y^{p+2}|_X$. Hence $\ker \alpha^{p+1}$ is itself reflexive. Furthermore, $\Omega_X^{[p]}$ is reflexive by definition.
So to prove the first claim, it is sufficient to show that $\rho_{p+1}$ is an isomorphism on the smooth locus of $X$. This is easily accomplished by a local computation.

Locally at a smooth point $x \in X \subset Y$, we may choose coordinates $x_0, \dots, x_n$ on $Y$ such that the defining equation of $X \subset Y$ is $f = x_0$. In these coordinates,
\[ \Omega_X^{[p]} = \bigoplus_{\substack{I \subset \{ 1, \dots, n \} \\[.3ex] |I| = p}} \O_X \cdot \d x_I, \]
where we use the usual multi-index notation. Likewise, for any $q$,
\[ \Omega_Y^q|_X = \bigoplus_{\substack{I \subset \{ 0, \dots, n \} \\[.3ex] |I| = q}} \O_X \cdot \d x_I, \]
and the map $\alpha^{p+1}$ is given by
\[ \alpha^{p+1}(\d x_I) =
\begin{cases}
\d x_{I \cup \{ 0 \}}, & 0 \not\in I, \\
0, & 0 \in I.
\end{cases}
\]
So
\[ \ker \alpha^{p+1} = \bigoplus_{\substack{I \subset \{ 0, \dots, n \} \\[.3ex] |I| = p + 1 \\[.3ex] 0 \in I}} \O_X \cdot \d x_I
= \bigoplus_{\substack{I \subset \{ 1, \dots, n \} \\[.3ex] |I| = p}} \O_X \cdot (\d x_0 \wedge \d x_I), \]
and
\[ \rho_{p+1}(\d x_0 \wedge \d x_I) = \res_X \left( \frac{\d x_0}{x_0} \wedge d x_I \right) = \d x_I. \]
We see that $\rho_{p+1}$ sends a basis to a basis, and hence it is an isomorphism locally around $x$.

Now we turn to the second claim. We will show that the left vertical map exists and is an isomorphism. The existence and isomorphy of the right vertical map then immediately follows from the Five lemma.

So let $\sigma \in \Hn(U, \Omega_Y^p|_X)$ be any form. Then
\begin{sequation} \label{eqn:cotor}
\rho_{p+1}(\alpha^p(\sigma)) = \res_X \left( \frac{\d f}{f} \wedge \sigma \right) = i^* \sigma
\end{sequation}%
is a reflexive form on $X$ which extends to $Y$, i.e.~it is a K\"ahler form modulo torsion. This means that the restriction of $\rho_{p+1}$ to $\img \alpha^p$ factors via the inclusion $\Omt_X^p \subset \Omega_X^{[p]}$. The map $\img \alpha^p \to \Omt_X^p$ thus obtained is the left vertical arrow.

Since this map $\img \alpha^p \to \Omt_X^p$ is automatically injective, all that remains to show is its surjectivity.
Let $\sigma \in \Hn(U, \Omt_X^p)$ be arbitrary. Then $\sigma = i^* \wt\sigma$ for some $\wt\sigma \in \Hn(U, \Omega_Y^p|_X)$. Since
\[ \rho_{p+1}(\alpha^p(\wt\sigma)) = i^* \wt\sigma = \sigma \]
as in~\eqref{eqn:cotor}, $\sigma$ is contained in the image of $\img \alpha^p$.
\end{proof}

\begin{prp}[$K$ computes the torsion] \label{prp:tor}
We have an isomorphism of short exact sequences
\[ \xymatrix{
0 \ar[r] & H^p(K) \ar[d]^\wr \ar[r] & \coker \alpha^{p-1} \ar[d]_{i^*}^\wr \ar[r] & \img \alpha^p \ar[d]^\wr \ar[r] & 0 \\
0 \ar[r] & \tor \Omega_X^p \ar[r] & \Omega_X^p \ar[r] & \Omt_X^p \ar[r] & 0.
} \]
\end{prp}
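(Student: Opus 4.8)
The plan is to organize the whole diagram around the central vertical map $i^*$, prove that it is an isomorphism, and then obtain the two outer isomorphisms formally.

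First I would establish that $i^*$ induces an isomorphism $\coker\alpha^{p-1}\xrightarrow{\sim}\Omega_X^p$. Recall that $\Omega_X^1=\Omega_Y^1|_X/\langle\d f\rangle$, so that $\Omega_X^p=\bigwedge^p\bigl(\Omega_Y^1|_X/\langle\d f\rangle\bigr)$ and the pull-back $i^*\!:\Omega_Y^p|_X=\bigwedge^p\Omega_Y^1|_X\to\Omega_X^p$ is simply the canonical projection. By the standard right-exact sequence for the $p$-th exterior power of a quotient module, the kernel of this projection is the submodule generated by all wedges having at least one factor in $\langle\d f\rangle$. Since $\langle\d f\rangle$ is generated by the single element $\d f$, this kernel equals $\d f\wedge\bigwedge^{p-1}\Omega_Y^1|_X=\img\alpha^{p-1}$. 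Crucially, this computation uses only that we have a module quotient and does \emph{not} require $\langle\d f\rangle$ to be locally free, so it remains valid at the singular points of $X$, where $\d f$ degenerates. Hence $i^*$ descends to the desired isomorphism $\coker\alpha^{p-1}\xrightarrow{\sim}\Omega_X^p$, which is the middle vertical arrow.

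Next I would identify the two rows. Because $\d f\wedge\d f=0$, the map $\alpha^p$ annihilates $\img\alpha^{p-1}$ and therefore factors through a surjection $\bar\alpha^p\!:\coker\alpha^{p-1}\to\img\alpha^p$ whose kernel is $\ker\alpha^p/\img\alpha^{p-1}=H^p(K)$; this is the top row. The bottom row is the defining short exact sequence of $\Omt_X^p=\Omega_X^p/\tor\Omega_X^p$. For the right-hand vertical map I would take directly the isomorphism $\img\alpha^p\xrightarrow{\sim}\Omt_X^p$ furnished by $\rho_{p+1}$ in Proposition~\ref{prp:cotor} (its left column), which is legitimate since $\img\alpha^p\subset\ker\alpha^{p+1}$.

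It then remains to check that the right-hand square commutes, after which the left isomorphism follows formally. For a local section $\sigma$ of $\Omega_Y^p|_X$ with class $[\sigma]\in\coker\alpha^{p-1}$, the clockwise route sends $[\sigma]\mapsto\alpha^p(\sigma)=\d f\wedge\sigma\mapsto\rho_{p+1}(\d f\wedge\sigma)$, which equals $i^*\sigma\in\Omt_X^p$ by~\eqref{eqn:cotor}; the counterclockwise route sends $[\sigma]\mapsto i^*\sigma$ and then to its class modulo torsion, giving the same element. With the middle and right verticals both isomorphisms and the square commutative, the middle isomorphism restricts to an isomorphism of the kernels of the two horizontal surjections, namely $H^p(K)\xrightarrow{\sim}\tor\Omega_X^p$ (equivalently, one invokes the Five lemma); this supplies the left vertical arrow and completes the proof. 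I expect the only genuine content to be the kernel computation in the first step — the right-exactness of exterior powers together with the observation that it behaves correctly even where $\d f$ vanishes — while everything afterward is diagram-chasing and an appeal to Proposition~\ref{prp:cotor}.
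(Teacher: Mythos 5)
Your proposal is correct and follows essentially the same route as the paper: identify $\coker\alpha^{p-1}\cong\Omega_X^p$ via the right-exactness of exterior powers of a quotient module (the paper cites \cite[Prop.~A2.2.d]{Eis95} for exactly this), take the right vertical map from Proposition~\ref{prp:cotor}, verify commutativity of the right-hand square using~\eqref{eqn:cotor}, and obtain the left isomorphism by the Five lemma. The only difference is that you sketch the exterior-power kernel computation explicitly where the paper simply cites Eisenbud.
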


\begin{proof}
First, note that the map $\coker \alpha^{p-1} \to \img \alpha^p$ in the upper sequence is induced by $\alpha^p$. By definition, we have
\[ \Omega_X^1 = \factor{ \Omega_Y^1|_X }{ \O_X \cdot \d f } \]
and hence~\cite[Prop.~A2.2.d)]{Eis95}
\[ \Omega_X^p = \factor{ \Omega_Y^p|_X }{ \Omega_Y^{p-1}|_X \wedge \d f }. \]
This shows the existence and isomorphy of the middle vertical map.
The right vertical map is the same as the left vertical map in Proposition~\ref{prp:cotor}.
We only need to show that the square formed by these maps (which is the right-hand square in the above diagram) commutes, for then the existence and isomorphy of the left vertical map follows from the Five lemma again.

Any local section of $\coker \alpha^{p-1}$ is represented by some form $\sigma \in \Hn(U, \Omega_Y^p|_X)$. If we go around the upper right corner of the square, we arrive at
\[ \rho_{p+1}(\alpha^p(\sigma)) = i^* \sigma \in \Hn(U, \Omt_X^p) \]
as in~\eqref{eqn:cotor}. If we go around the lower left corner, we clearly get the same.
\end{proof}

\begin{rem}
The horizontal sequences of Propositions~\ref{prp:cotor} and~\ref{prp:tor} are dual to each other in a categorical sense. However, it is not clear to us whether the vertical maps, which are induced by $\rho_{p+1}$ and $i^*$, respectively, are also dual in some sense.
\end{rem}

\begin{proof}[Proof of Theorem~\ref{thm:co-tor h}]
Theorem~\ref{thm:co-tor h}.\ref{itm:co-tor h.1} is immediate from Propositions~\ref{prp:cotor} and~\ref{prp:tor}.
For~(\ref{thm:co-tor h}.\ref{itm:co-tor h.2}), let $x \in X$ be a singular point, and let $K_x$ denote the complex obtained from $K$ by taking stalks at $x$.
Since the stalk functor is exact, from (\ref{thm:co-tor h}.\ref{itm:co-tor h.1}) we get
\[ H^p(K_x) \isom \tor \Omega_{X,x}^p \quad \text{and} \quad H^p(K_x) \isom \cotor \Omega_{X,x}^{p-1}. \]
Now note that $K_x$ is a Koszul complex. More precisely, with respect to the usual basis $\dif x_1, \dots, \dif x_{n+1}$ of the free $\O_{X,x}$-module $\Omega_Y^1|_{X,x}$ we have
\[ K_x = K(\del f/\del x_1, \dots, \del f/\del x_{n+1}), \]
using the notation from Section~\ref{sec:koszul}. Furthermore, since $x \in X$ was assumed to be a singular point, we have $\del f/\del x_i \in \mf m_x \subset \O_{X,x}$ for all $i$. Hence Theorem~\ref{thm:co-tor h}.\ref{itm:co-tor h.2} follows from Theorems~\ref{thm:coh Koszul} and~\ref{thm:Vetter}.

Theorems~\ref{thm:co-tor h}.\ref{itm:co-tor h.3} and~\ref{thm:co-tor h}.\ref{itm:co-tor h.4} follow immediately from~(\ref{thm:co-tor h}.\ref{itm:co-tor h.2}) and the fact that a sheaf is zero if and only if all its stalks are zero.
\end{proof}

\section{The generalized LZ problem for hypersurface singularities}

In this section, we prove Theorem~\ref{thm:gen LZ hypersurf I}.
So let $(x \in X)$ be a hypersurface singularity of dimension $n$ such that the singular locus of $X$ has codimension at least three, and such that $\Omega_X^{[p]}$ is free for some $1 \le p \le n-1$. We aim to show that $(x \in X)$ is smooth. Note that $(x \in X)$ is at least normal by~\cite[Ch.~II, Thm.~8.22A]{Har77}.

\begin{clm} \label{clm:depth est}
Let $Z \subset X$ be a closed analytic subset of codimension $\ge 3$. Then we have
\[ \depth_Z \Omega_X^{p+2} \ge 1. \]
\end{clm}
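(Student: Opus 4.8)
The plan is to bound the projective dimension of $\Omega_X^{p+2}$ using the Koszul resolution from Theorem~\ref{thm:co-tor h}, and then to translate this into a depth estimate via the Auslander--Buchsbaum formula (Theorem~\ref{thm:AB}). The key observation is that the complex $K$ of Theorem~\ref{thm:co-tor h} is, locally at any point $y \in X$, built out of \emph{free} $\O_{X,y}$-modules $\Omega_Y^q|_{X,y}$, and its cohomology in degree $p+2$ computes $\tor\Omega_X^{p+2}$ (and $\cotor\Omega_X^{p+1}$). Since $\Omega_X^{[p]}$ is free by hypothesis, part~(\ref{thm:co-tor h}.\ref{itm:co-tor h.4}) of Theorem~\ref{thm:co-tor h} tells us that all lower torsion and cotorsion vanish; this should let me splice together the tail of $K$ with the exact sequences of Propositions~\ref{prp:cotor} and~\ref{prp:tor} to produce a short free resolution of $\Omega_X^{p+2}$.

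Concretely, I would first fix a point $y \in Z$ and work with the stalks at $y$, setting $R = \O_{Y,y}$ and $S = \O_{X,y} = R/(f)$. By Theorem~\ref{thm:very weak gen LZ} and the freeness assumption, I expect to be able to show that $X$ is smooth away from $Z$ in the relevant range, so the only issue is the behaviour along the codimension-$\ge 3$ set $Z$. The plan is to extract from the Koszul complex $K$ a finite free resolution of $\Omega_{X,y}^{p+2}$ of bounded length: the freeness of $\Omega_X^{[p]}$ forces $\tor\Omega_X^q = \cotor\Omega_X^q = 0$ for $q \le p-1$, which by the isomorphisms $H^q(K)\isom\tor\Omega_X^q$ means $K$ is exact in those low degrees, trimming the front of the complex. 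I would then read off an upper bound $\pd_S \Omega_{X,y}^{p+2} \le c$ for some explicit small constant $c$ (coming from the length of the surviving piece of $K$ together with the short exact sequences relating $\coker\alpha^{p+1}$, $\Omega_X^{p+2}$, and $\Omt_X^{p+2}$).

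With the projective dimension bounded, I would invoke Auslander--Buchsbaum: $\depth_{\mf m_y}\Omega_{X,y}^{p+2} = \depth S - \pd_S\Omega_{X,y}^{p+2}$. Since $X$ is a hypersurface, $S$ is Cohen--Macaulay of dimension $n$, so $\depth S = n$. Combined with the codimension hypothesis on $Z$ — which gives $\dim S \ge 3$ at points of $Z$, hence a comfortable lower bound on $\depth S$ relative to the small $\pd$ bound — this yields $\depth_{\mf m_y}\Omega_{X,y}^{p+2}\ge 1$ for every $y\in Z$. Taking the infimum over $y\in Z$ and using Lemma~\ref{lem:depth} to pass from the local depths to $\depth_Z\Omega_X^{p+2}$ then delivers the claim.

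The main obstacle I anticipate is \emph{bookkeeping the resolution length correctly}: the Koszul complex $K$ gives cohomological, not projective-resolution, data, so I must carefully assemble the syzygy information — gluing the exactness of $K$ in low degrees with the two short exact sequences of Propositions~\ref{prp:cotor} and~\ref{prp:tor} — to produce a genuine free resolution of $\Omega_X^{p+2}$ and to pin down its exact length $c$. Getting $c$ small enough that $\depth S - c \ge 1$ at all points of the codimension-$\ge 3$ set $Z$ is the crux; the codimension-three hypothesis is precisely what should make the numerics work, and I expect the computation to hinge on showing $c \le n-1$ (equivalently $\pd \le \dim S - 1$) at points of $Z$.
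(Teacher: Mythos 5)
Your overall strategy---extract a free resolution of $\Omega_X^{p+2}$ from the complex $K$ and then combine Auslander--Buchsbaum with Cohen--Macaulayness of the hypersurface and the codimension hypothesis---is the same as the paper's, but two of your key steps fail as stated.

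First, your mechanism for producing the resolution is unjustified and in fact circular. You claim that freeness of $\Omega_X^{[p]}$ forces $\tor \Omega_X^q = \cotor \Omega_X^q = 0$ for $q \le p-1$ via (\ref{thm:co-tor h}.\ref{itm:co-tor h.4}), but that statement takes $\tor \Omega_X^p = 0$ as its \emph{hypothesis}, and freeness of the reflexive hull $\Omega_X^{[p]}$ says nothing about $\tor \Omega_X^p$ or $\cotor \Omega_X^p$ (the cotorsion is the cokernel of $\Omega_X^p \to \Omega_X^{[p]}$ and can be nonzero even when the target is free). In the paper, the vanishing $\tor \Omega_X^p = \cotor \Omega_X^p = 0$ is the \emph{conclusion} of the proof of Theorem~\ref{thm:gen LZ hypersurf I}, deduced from Claim~\ref{clm:tor-free}, which itself rests on the present depth claim; assuming it here begs the question. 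Moreover, no trimming of $K$ is needed at all: Propositions~\ref{prp:cotor} and~\ref{prp:tor} identify $\ker \alpha^{p+1} \isom \Omega_X^{[p]}$ and $\coker \alpha^{p+1} \isom \Omega_X^{p+2}$, so the tautological kernel--cokernel sequence of $\alpha^{p+1}$ reads
\[ 0 \lto \Omega_X^{[p]} \lto \Omega_Y^{p+1}|_X \lto \Omega_Y^{p+2}|_X \lto \Omega_X^{p+2} \lto 0, \]
which is a free resolution of length $2$ once $\Omega_X^{[p]}$ is free. Your trimmed complex, even granting the vanishing, would have length $p+2$, which is useless for the numerics below.

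Second, your depth bookkeeping computes the wrong depth. By definition, $\depth_Z \Omega_X^{p+2}$ is the infimum over $y \in Z$ of $\depth_{\mf a_y}$, the depth with respect to the ideal $\mf a_y$ of $Z$, not of $\depth_{\mf m_y}$; since $\mf a_y \subset \mf m_y$ one has $\depth_{\mf a_y} \le \depth_{\mf m_y}$, so a lower bound on $\depth_{\mf m_y}$ at closed points---which is all your target $\pd \le n-1$ buys---does not bound $\depth_Z$ from below. Lemma~\ref{lem:depth} must be applied in the other direction: $\depth_{\mf a_y} \Omega_{X,y}^{p+2} = \inf_{\mf p \supset \mf a_y} \depth \big( \Omega_{X,y}^{p+2} \big)_{\mf p}$, where the infimum runs over \emph{all} primes containing $\mf a_y$, including primes of height as small as $\codim_X Z = 3$, for which $\depth \O_{X,\mf p} = \dim \O_{X,\mf p}$ can equal $3$. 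At such a prime, Auslander--Buchsbaum requires $\pd_{\O_{X,\mf p}} \big( \Omega_{X,y}^{p+2} \big)_{\mf p} \le 2$ to conclude depth $\ge 1$; a bound of $n-1$ gives only $3 - (n-1) \le 0$ for $n \ge 4$. With the length-$2$ resolution above the computation does close: $\depth \big( \Omega_{X,y}^{p+2} \big)_{\mf p} = \depth \O_{X,\mf p} - \pd_{\O_{X,\mf p}} \big( \Omega_{X,y}^{p+2} \big)_{\mf p} \ge 3 - 2 = 1$ for every $\mf p \supset \mf a_y$.
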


\begin{proof}[Proof of Claim~\ref{clm:depth est}]
We may assume that $(x \in X)$ is embedded as a hypersurface in an open subset $Y \subset \C^{n+1}$. Thus we are in the situation of Theorem~\ref{thm:co-tor h}, and we will use the notation from that theorem.
By Propositions~\ref{prp:cotor} and~\ref{prp:tor}, the map $\alpha^{p+1}$ gives rise to an exact sequence
\begin{sequation} \label{eqn:resol Omt}
0 \lto \Omega_X^{[p]} \xrightarrow{\dif f \wedge} \Omega_Y^{p+1}|_X \lto \Omega_Y^{p+2}|_X \lto \Omega_X^{p+2} \lto 0,
\end{sequation}%
which in this case we can view as a free resolution of $\Omega_X^{p+2}$ of length $2$.
Now let $x \in Z$ be any point. Denoting the ideal sheaf of $Z$ by $\mf a \subset \O_X$, we need to show
\[ \depth_{\mf a_x} \Omega_{X,x}^{p+2} \ge 1. \]
By Lemma~\ref{lem:depth}, it is enough to show
\[ \depth \big( \Omega_{X,x}^{p+2} \big)_{\mf p} \ge 1 \]
for any prime ideal $\mf a_x \subset \mf p \subset \O_{X,x}$.
By abuse of notation, we write $\O_{X, \mf p}$ for the localization of $\O_{X,x}$ at the prime $\mf p$.
Since $X$ is a hypersurface, the local ring $\O_{X, x}$ is Cohen--Macaulay, and then so is $\O_{X, \mf p}$~\cite[Ch.~II, Thm.~8.21A]{Har77}. This gives us
\[ \depth \O_{X, \mf p} = \dim \O_{X, \mf p} \ge \codim_X Z \ge 3. \]
On the other hand, from~\eqref{eqn:resol Omt} we see that
\[ \pd_{\O_{X, \mf p}} \big( \Omega_{X, x}^{p+2} \big)_{\mf p} \le 2. \]
Hence by the Auslander--Buchsbaum formula, Theorem~\ref{thm:AB}, we obtain
\[ \depth \big( \Omega_{X, x}^{p+2} \big)_{\mf p} = \depth \O_{X, \mf p} - \pd_{\O_{X, \mf p}} \big( \Omega_{X, x}^{p+2} \big)_{\mf p} \ge 1 \]
as desired.
\end{proof}

\begin{clm} \label{clm:tor-free}
We have $\tor \Omega_X^{p+2} = 0$.
\end{clm}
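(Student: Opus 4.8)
The plan is to read off the vanishing of the torsion from the depth bound of Claim~\ref{clm:depth est}, using that the torsion of $\Omega_X^{p+2}$ is concentrated where $X$ is singular. First I would observe that over the smooth locus $X_\sm$ the sheaf $\Omega_{X_\sm}^{p+2}$ is locally free, hence torsion-free; consequently $\tor \Omega_X^{p+2}$ is supported on the singular locus $Z \subset X$, and by hypothesis $\codim_X Z \ge 3$. In particular Claim~\ref{clm:depth est} applies to $Z$ and yields $\depth_Z \Omega_X^{p+2} \ge 1$.

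I would then argue stalkwise, a sheaf being zero precisely when all its stalks vanish. For $x \notin Z$ the stalk $\Omega_{X,x}^{p+2}$ is free and hence torsion-free. For $x \in Z$, write $\mf a_x \subset \O_{X,x}$ for the ideal of $Z$. By Lemma~\ref{lem:depth} the bound $\depth_Z \Omega_X^{p+2} \ge 1$ gives $\depth_{\mf a_x} \Omega_{X,x}^{p+2} \ge 1$, so there is an element $g \in \mf a_x$ which is a non-zero-divisor on $\Omega_{X,x}^{p+2}$. On the other hand, $\tor \Omega_X^{p+2}$ is a coherent sheaf supported on $Z$, so locally it is annihilated by some power $\mf a_x^k$ of $\mf a_x$; in particular $g^k$ annihilates $\tor \Omega_{X,x}^{p+2}$. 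Since $g$, and hence $g^k$, is a non-zero-divisor on the ambient module $\Omega_{X,x}^{p+2}$, this forces $\tor \Omega_{X,x}^{p+2} = 0$. As $x \in Z$ was arbitrary, $\tor \Omega_X^{p+2} = 0$.

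The one point carrying real content is the very first observation, that the torsion sits on a subset of codimension $\ge 3$, since this is exactly what makes Claim~\ref{clm:depth est} --- which itself rests on the length-two free resolution~\eqref{eqn:resol Omt} and the Auslander--Buchsbaum formula (Theorem~\ref{thm:AB}) --- applicable. After that, the argument is the routine translation ``$\depth \ge 1$ along $Z$'' $\Longleftrightarrow$ ``no nonzero section supported on $Z$'', which one may equally package as the inclusion $\tor \Omega_X^{p+2} \subseteq \mathcal H^0_Z(\Omega_X^{p+2}) = 0$; I do not expect any genuine obstacle here.
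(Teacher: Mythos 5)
Your proof is correct, and it rests on the same pivot as the paper's: apply Claim~\ref{clm:depth est} to $Z = X_\sg$, which is legitimate because the torsion of $\Omega_X^{p+2}$ restricts to zero on $X_\sm$ and the singular locus has codimension at least three by hypothesis. The two arguments diverge only in how the bound $\depth_Z \Omega_X^{p+2} \ge 1$ is converted into vanishing of the torsion. The paper rephrases torsion-freeness as injectivity of the restriction maps $H^0(U, \Omega_X^{p+2}) \to H^0(U_\sm, \Omega_X^{p+2})$ and then quotes \cite[Ch.~II, Thm.~3.6]{BS76}, i.e.\ in effect the local cohomology vanishing $\mathcal{H}^0_Z(\Omega_X^{p+2}) = 0$ for $\depth_Z \ge 1$. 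You instead argue stalkwise and elementarily: coherence of the torsion subsheaf together with the R\"uckert Nullstellensatz gives $\mf a_x^k \subseteq \mathrm{Ann}\bigl(\tor \Omega_{X,x}^{p+2}\bigr)$, and a non-zero-divisor $g \in \mf a_x$ supplied by the depth bound then forces this annihilated submodule to vanish. This is precisely the ``elementary proof of Claim~\ref{clm:tor-free}, directly from the definition of depth'' whose existence the paper records in the remark immediately following its own proof; your version buys self-containedness (no external citation beyond the Nullstellensatz), while the paper's is shorter and makes the local-cohomology content of the step explicit. Two cosmetic points: the passage from $\depth_Z$ to $\depth_{\mf a_x}$ at a given $x \in Z$ is immediate from the paper's definition of $\depth_Z$, so Lemma~\ref{lem:depth} is not actually needed there; and the uniform exponent $k$ in your annihilation step is best justified by applying the Nullstellensatz to the coherent annihilator ideal sheaf of $\tor \Omega_X^{p+2}$, whose zero locus is the support.
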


\begin{proof}[Proof of Claim~\ref{clm:tor-free}]
Since a K\"ahler form is torsion if and only if it vanishes on the smooth locus, all we need to prove is the injectivity of the restriction map
\[ H^0(U, \Omega_X^{p+2}) \to H^0(U_\sm, \Omega_X^{p+2}) \]
for any open set $U \subset X$.
This follows immediately from Claim~\ref{clm:depth est} applied to $Z = X_\sg$ and~\cite[Ch.~II, Thm.~3.6]{BS76}.
\end{proof}

\begin{rem}
It is also possible to give an elementary proof of Claim~\ref{clm:tor-free}, directly from the definition of depth.
\end{rem}

We can now conclude the proof of Theorem~\ref{thm:gen LZ hypersurf I}.
By Claim~\ref{clm:tor-free} and Theorem~\ref{thm:co-tor h}.\ref{itm:co-tor h.4},
\[ \tor \Omega_X^p = \cotor \Omega_X^p = 0. \]
This means that $\Omega_X^p = \Omega_X^{[p]}$. The latter sheaf is free by assumption, so Theorem~\ref{thm:very weak gen LZ} applies to show that $(x \in X)$ is smooth.

\section{The weak generalized LZ problem for hypersurface singularities}

This last and very short section is devoted to the proof of Theorem~\ref{thm:weak gen LZ hypersurf}. Given Theorems~\ref{thm:very weak gen LZ} and~\ref{thm:co-tor h}, this proof is very easy. Let $(x \in X)$ be a normal hypersurface singularity of dimension $n$, and assume that $\Omt_X^p$ is free for some $1 \le p \le n$. Then obviously
\[ \cotor \Omega_X^p = \cotor \Omt_X^p = 0. \]
Hence by Theorem~\ref{thm:co-tor h}.\ref{itm:co-tor h.3}, we also get $\tor \Omega_X^p = 0$, so $\Omega_X^p = \Omega_X^{[p]}$. But $\Omega_X^{[p]} = \Omt_X^p$ is free by assumption, so $(x \in X)$ is smooth by Theorem~\ref{thm:very weak gen LZ}. \qed



\providecommand{\bysame}{\leavevmode\hbox to3em{\hrulefill}\thinspace}
\providecommand{\MR}{\relax\ifhmode\unskip\space\fi MR}
\providecommand{\MRhref}[2]{%
  \href{http://www.ams.org/mathscinet-getitem?mr=#1}{#2}
}
\providecommand{\href}[2]{#2}

\end{document}